\newtheorem{thm}{Theorem}[section]
\newtheorem{lem}[thm]{Lemma}
\newtheorem{cor}[thm]{Corollary}
\newtheorem{prop}[thm]{Proposition}
\theoremstyle{definition}
\newtheorem{defn}[thm]{Definitions}
\newtheorem{notn}[thm]{Notation}
\newtheorem{rem}[thm]{Remark}
\newtheorem{ass}[thm]{Assumptions}
\newtheorem{ex}[thm]{Example}
\def\C{{\mathbb C}}
\def\G{{\mathbb G}}
\def\P{{\mathbb P}}
\def\Q{{\mathbb Q}}
\def\R{{\mathbb R}}
\def\Z{{\mathbb Z}}
\def\cC{{\mathcal C}}
\def\cE{{\mathcal E}}
\def\cF{{\mathcal F}}
\def\cM{{\mathcal M}}
\def\cN{{\mathcal{N}}}
\def\cO{{\mathcal{O}}}
\def\cQ{{\mathcal{Q}}}
\def\cU{{\mathcal U}}
\def\Q{{\mathbb{Q}}}
\def\G{{\mathbb{G}}}
\def\f{\varphi}
\def\f{\varphi}
\def\lra{\longrightarrow}
\def\ra{\rightarrow}
\def\lra{\longrightarrow}
\def\operatorname#1{\mathop{\rm #1}\nolimits}
\def\Pic{\operatorname{Pic}}
\def\im{\operatorname{Im}}
\def\deg{\operatorname{deg}}
\def\det{\operatorname{det}}
\def\lin{\operatorname{lin}}
\def\qed{\hspace{\fill}$\rule{2mm}{2mm}$}
\def\NE{{\operatorname{NE}}}
\def\Nef{{\operatorname{Nef}}}
\def\Eff{{\operatorname{Eff}}}
\newcommand{\cNE}[1]{\overline{\NE(#1)}}
\newcommand{\cEff}[1]{\overline{\Eff(#1)}}
\def\arg{{\operatorname{arg}}}
\def\ch{{\operatorname{ch}}}
\def\td{{\operatorname{td}}}
\def\Sl{{\operatorname{Sl}}}
\def\Sp{{\operatorname{Sp}}}
\def\Gg{{\rm{G}}}
\newcommand{\shse}[3]{0 ~\ra ~#1~ \lra ~#2~ \lra ~#3~ \ra~ 0}
\begin{document}

\title{A classification theorem on Fano bundles}

\author{Roberto Mu\~noz}
\address{Departamento de Matem\'atica Aplicada, ESCET, Universidad
Rey Juan Carlos, 28933-M\'ostoles, Madrid, Spain}
\email{roberto.munoz@urjc.es}
\thanks{First and third author partially supported by the spanish government project MTM2009-06964}

\author{Gianluca Occhetta}
\address{Dipartimento di Matematica, Universit\'a di Trento, via
Sommarive 14 I-38123 Povo (TN), Italy}
\email{gianluca.occhetta@unitn.it}

\author{Luis E. Sol\'a Conde}
\address{Departamento de Matem\'atica Aplicada, ESCET, Universidad
Rey Juan Carlos, 28933-M\'ostoles, Madrid, Spain}
\email{luis.sola@urjc.es}

\subjclass[2010]{Primary 14M15; Secondary 14E30, 14J45}

\keywords{Vector bundles, Fano manifolds}

\begin{abstract}
In this paper we classify rank two Fano bundles $\cE$ on Fano manifolds satisfying $H^2(X,\Z)\cong H^4(X,\Z)\cong\Z$.
The classification is obtained via the computation of the nef and pseudoeffective cones of the projectivization $\P(\cE)$, that allows us to obtain the cohomological invariants of $X$ and $\cE$. As a by-product we discuss Fano bundles associated to congruences of lines, showing that their varieties of minimal rational tangents may have several linear components.
\end{abstract}
\maketitle


\section{Introduction}\label{sec:intro}

One of the most important open problems in the theory of vector bundles on complex projective manifolds is the existence of indecomposable vector bundles of low rank on the projective space.
In fact, the extense literature on the subject shows that, even for rank two, the problem is complicated.
One way in which one may try to tackle it is by studying the space of numerical classes of divisors on the projectivization $\P(\cE)$ of the bundle, and the cones (nef, pseudoeffective) contained in it.
In our previous paper \cite{MOS3} we have considered this type of argumentation in the setting of rank two vector bundles on Fano manifolds $X$ with $H^2(X,\Z)\cong H^4(X,\Z)\cong \Z$, obtaining a number of results regarding the structures of the nef and pseudoeffective cones of $\P(\cE)$ and their relation with the decomposability of the bundle, as well as some applications.

On the other hand, if $\cE$ is a {\it Fano bundle}, i.e. if $\P(\cE)$ is a Fano manifold, we have a second contraction $\pi':\P(\cE)\to X'$, that
we may use to infer important properties about $\cE$ (stability, for instance) and the cones of divisors of $\P(\cE)$; see Section \ref{sec:setup} below. In this paper we use these techniques
to classify rank two Fano bundles on manifolds with  $H^2(X,\Z)\cong H^4(X,\Z)\cong \Z$ (see Notation \ref{ssec:cast}):

\begin{thm}\label{thm:fanobdl}
Let $X$ be a Fano manifold satisfying $H^2(X,\Z)\cong H^4(X,\Z)\cong\Z$, and let $\cE$ be an indecomposable  rank two Fano bundle on $X$.
Then, up to a twist with a line bundle, $\cE$ is the pull-back of the universal quotient bundle on a Grassmannian $\G(1,m)$ by a finite map $\psi:X\to\G(1,m)$ where either

\begin{itemize}
\item $\psi$ is one of the embeddings given by
\begin{itemize}
\item[(P1)] $\P^2\cong \G(1,2)$,
\item[(P2)] $\Q^3\cong L\G(1,3)\subset\G(1,3)$,
\item[(P3)] $\P^3\cong \G(1,4)_{\Q^3}\subset \G(1,4)$,
\item[(P4)] $\Q^5\cong \G(1,13)_{K(\Gg_2)}\subset\G(1,13)$,
\item[(P5)] $K(\Gg_2)\cong F\G(1,6)\subset \G(1,6)$,
\item[(D1)] $\P^2\subset \G(1,4)_{\Q^3}$ (set of lines in $\Q^3$ meeting a fixed line),
\item[(D2)] $v_2(\P^2)\subset \G(1,3)$ (set of secant lines to $v_3(\P^1)\subset\P^3$),
\item[(D3)] $V_{5}^3\subset \G(1,4)$ (set of trisecant lines to the isomorphic projection of $v_2(\P^2)$ into $\P^4$),
\item[(C6)] $V_4^3\subset \G(1,3)$ (smooth quadric section of the Pl\"ucker embedding); or
\end{itemize}
\item $\psi$ factorizes by a finite covering $\psi_1:X\to X_1$ of one of the submanifolds of types {\rm (P1),(P2),(P3),(P4),(P5)} above and, either
\begin{itemize}
\item[(C1)] $X=X_1=\P^2$  and $\psi_1$ is given by a base point free two dimensional linear subsystem of conics, or
\item[(C2-5)] $X_1=\Q^3,\,\P^3,\,\Q^5$ or $K(\Gg_2)$, $X$ is a quadric section of the cone with vertex one point over the natural embedding of $X_1$ and $\psi_1$ is the projection form the vertex of the cone.
\end{itemize}
\end{itemize}
\end{thm}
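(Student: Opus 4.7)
The plan is to exploit the two extremal Mori contractions of $\P(\cE)$. Since $\cE$ is Fano and $\rho(X)=1$, the manifold $\P(\cE)$ is a smooth Fano of Picard number $2$, so it admits exactly two elementary contractions: the projectivization $\pi: \P(\cE) \to X$, which is a $\P^1$-bundle, and a second contraction $\pi': \P(\cE) \to X'$. The entire classification is driven by the geometry of $\pi'$.

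As a preliminary step, I would use the description of the nef and pseudoeffective cones of $\P(\cE)$ from \cite{MOS3} to normalize $\cE$ up to a twist by a line bundle and to parametrize its Chern classes by numerical invariants whose range is constrained by the Fano condition on $\P(\cE)$. Combined with the structure theorems for extremal contractions on smooth Fano manifolds, this divides the problem into finitely many cases depending on the type of $\pi'$: either (i) $\pi'$ is of fiber type, with general fiber a Fano manifold of Picard number at most one, or (ii) $\pi'$ is divisorial, contracting a prime divisor to some subvariety of $X'$; small contractions are ruled out because $\P(\cE)$ is smooth with $\rho=2$.

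In each case, I would build the finite map $\psi: X \to \G(1,m)$ from the incidence geometry of $\pi'$. The idea is that for a general $x \in X$, the fiber $\pi^{-1}(x) \cong \P^1$ maps via $\pi'$ into $X'$; under a suitable projective embedding $X' \hookrightarrow \P^m$ induced by the pushforward $\pi'_*\cO_{\P(\cE)}(1)$ (possibly twisted), this image is a line in $\P^m$. Assigning to $x$ this line yields the morphism $\psi: X \to \G(1,m)$, and a diagram chase identifies $\cE$ with $\psi^*\cQ$ (up to twist), where $\cQ$ is the universal quotient bundle. Indecomposability of $\cE$ forces $\psi$ to be nontrivial, and the bundle property of $\psi^*\cQ$ forces $\psi$ to be finite onto its image.

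The bulk of the work, and the main obstacle, lies in identifying exactly which pairs $(X,\psi)$ arise. This requires invoking classifications of Fano manifolds with $H^2(X,\Z) \cong H^4(X,\Z) \cong \Z$ in low dimension (dimensional bounds coming from the Fano condition on $\P(\cE)$ confine us essentially to $\dim X \leq 5$), checking in each instance that $\psi^*\cQ$ is actually a Fano bundle, and sorting the outcomes into the three families of the statement: the embeddings (P1)-(P5), which correspond to linear sections of Grassmannians of symplectic or exceptional type; the congruences (D1)-(D3) and (C6), which correspond to specific subvarieties of Grassmannians cut out by natural incidence or degree conditions; and the covering cases (C1)-(C5), which require a careful construction of $X$ as a cover (typically by a quadric section of a projective cone) of one of the manifolds in (P1)-(P5). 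The delicate part is the cone-type cases, where $\pi'$ fails to be a $\P^1$-bundle and the identification of $\psi$ must be traced through an intermediate birational model.
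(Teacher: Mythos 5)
Your overall skeleton — exploit the second elementary contraction $\pi':\P(\cE)\to X'$, split into cases by its type, and recover $\psi:X\to\G(1,m)$ from the fact that fibers of $\pi$ map to lines — is indeed the strategy of the paper. But as written the plan has gaps at exactly the points where the real work happens, and some of your case division is not correct as stated. First, your dichotomy ``fiber type with general fiber of Picard number at most one'' versus ``divisorial'' is too coarse, and your reason for excluding small contractions (smoothness and $\rho=2$) is not valid in general. The essential input, taken from \cite{MOS3}, is that \emph{every} fiber of $\pi'$ has dimension at most one; only then does Wi\'sniewski's theorem give the precise trichotomy on which everything rests: $\pi'$ is a $\P^1$-bundle, or the blow-up of a \emph{smooth codimension-two} center, or a conic bundle. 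Without that fiber-dimension bound, ``fiber type'' admits high-dimensional fibers and the classification you propose cannot get started.

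Second, the assertion that ``dimensional bounds coming from the Fano condition confine us essentially to $\dim X\le 5$'' is not something you can wave at: in cases (P) and (C) the bound $n\in\{2,3,5\}$ comes from the relation $n\arg(\tau+\sqrt{\Delta})+(\rho+\sqrt{\Delta})=\pi$ between the nef and pseudoeffective thresholds, which forces $4\cos^2(\pi/(n+1))$ to be rational (a Niven-type argument); in case (D) there is no such a priori bound, and the paper instead needs Lazarsfeld's Barth-type theorem for branched coverings applied to $\pi|_E$, a top-Chern-class vanishing for $S^{\tau'}\cE(\tau'-1)$, and De Poi's classification of codimension-two subvarieties with one apparent $n$-tuple point to land on (D2) and (D3). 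Third, your identification of $\psi$ with a map to a Grassmannian of \emph{lines} in $X'$ requires knowing that the curves contracted by $\pi'$ have $H$-degree one and, dually, that $\pi(f')$ has $H'$-degree one ($\mu=\mu'=1$); this is a genuinely arithmetic statement (proved via the integrality of the change-of-basis matrix on $H^4(Y,\Z)$ and congruences modulo $\mu$) and is nowhere addressed in your plan. Finally, even once the numerical invariants are pinned down, cases (C5) and (D1)--(D3) require specific identifications (the Cayley bundle on $\Q^5$ via Ottaviani, stability plus Chern classes determining the universal quotient bundles, the cone construction for the double covers) that go well beyond ``checking in each instance that $\psi^*\cQ$ is a Fano bundle.'' So the approach is the right one, but the proposal is a roadmap missing the theorems that make each leg of the journey passable.
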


Our assumptions are satisfied by many significative examples of Fano manifolds, including Fano threefolds with $\Pic(X) \simeq \Z$ (by Poincar\'e duality), projective spaces and quadrics (of dimension different from $2$ and $4$).
Furthermore we may construct many other examples upon these,
by considering appropriate subvarieties (by Barth--Lefschetz Theorems) or cyclic coverings (by \cite{Co}).
Rank two Fano bundles over projective spaces and quadrics were already completely classified (see \cite{SW1,SW2,SSW,APW}). With our approach we recover those classifications (clearly with the exception of $\Q^2$ and $\Q^4$, which do not satisfy our assumptions).

Note that we proved in \cite{MOS1} and \cite{MOS2} that, up to twists, the only indecomposable rank two Fano bundle in $\G(1,m)$ is the universal quotient bundle $\cQ$. A case by case analysis of the Fano bundles appearing in the above theorem  allows us to state the following:
\begin{cor}\label{cor:nef}
Let $X$ be a Fano manifold satisfying $H^2(X,\Z)\cong H^4(X,\Z)\cong\Z$, and let $\cE$ be an indecomposable  rank two Fano bundle on $X$. Then there exists an integer $m$ and a finite map $\psi:X\to\G(1,m)$ such that $\psi^*(\Nef(\P(\cQ)))=\Nef(\P(\cE))$.
\end{cor}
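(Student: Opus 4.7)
The plan is to use Theorem~\ref{thm:fanobdl} to reduce the equality of nef cones to standard facts about finite morphisms. By that theorem, we may write $\cE\cong\psi^*\cQ\otimes L$ for some integer $m$, a finite morphism $\psi\colon X\to\G(1,m)$, and a line bundle $L$ on $X$. Since projectivizations are insensitive to line-bundle twists, there is a canonical identification $\P(\cE)\cong\P(\psi^*\cQ)=X\times_{\G(1,m)}\P(\cQ)$, yielding a finite surjective morphism $\Psi\colon\P(\cE)\to\P(\cQ)$; it is the pullback $\Psi^*$ that we interpret as $\psi^*$ in the statement of the corollary.

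The inclusion $\Psi^*\Nef(\P(\cQ))\subseteq\Nef(\P(\cE))$ is immediate, since pullbacks of nef classes under surjective morphisms are nef. For the converse, observe that both $\Nef(\P(\cE))$ and $\Nef(\P(\cQ))$ are two-dimensional closed convex cones, as $X$ and $\G(1,m)$ have Picard number one and both bundles are indecomposable. Since $\Psi$ is finite and surjective, the pullback $\Psi^*\colon N^1(\P(\cQ))_\R\to N^1(\P(\cE))_\R$ is injective, and as both sides have dimension two it is in fact an isomorphism. Moreover, Nakai--Moishezon applied to finite surjective morphisms shows that a class on $\P(\cQ)$ is ample if and only if its $\Psi^*$-pullback is ample on $\P(\cE)$, so $\Psi^*$ sends nef-but-not-ample classes to nef-but-not-ample ones.

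Applied to the two extremal rays of $\Nef(\P(\cQ))$, whose generators are nef but not ample (each corresponds to a Mori contraction of the Fano manifold $\P(\cQ)$), this yields two distinct rays contained in $\partial\Nef(\P(\cE))$. Since the boundary of a two-dimensional closed cone consists of exactly two extremal rays, these images must coincide with the extremal rays of $\Nef(\P(\cE))$ themselves. The equality $\Psi^*\Nef(\P(\cQ))=\Nef(\P(\cE))$ follows. The principal difficulty is therefore packaged into Theorem~\ref{thm:fanobdl}: once the case-by-case classification there produces the finite $\psi$ and identifies $\cE$ up to twist with $\psi^*\cQ$, the equality of nef cones is a formal consequence of the behaviour of ampleness under finite maps.
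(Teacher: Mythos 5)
Your reduction to Theorem \ref{thm:fanobdl} is the right starting point, and the first half of the argument is fine: pullback of a nef class under any morphism is nef, so $\Psi^*\Nef(\P(\cQ))\subseteq\Nef(\P(\cE))$, and $\Psi^*$ identifies the two two-dimensional N\'eron--Severi spaces. The gap is in the reverse inclusion. You assert that $\Psi\colon\P(\cE)\to\P(\cQ)$ is \emph{surjective} and invoke Nakai--Moishezon to conclude that $\Psi^*$ preserves the property ``nef but not ample''. But $\Psi$ is the base change of $\psi\colon X\to\G(1,m)$, and in every case of Theorem \ref{thm:fanobdl} except (P1) and (C1) the map $\psi$ is an embedding of, or a finite cover onto, a \emph{proper} subvariety of $\G(1,m)$; hence $\Psi$ is finite but not surjective. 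For a finite non-surjective morphism the implication ``$D$ not ample $\Rightarrow\Psi^*D$ not ample'' fails: the tautological class $L_{\cQ}$ is nef and not ample on $\P(\cQ)$ (it is pulled back from the evaluation $\P(\cQ)\to\P^m$), yet its pullback to $\P(\psi^*\cQ)$ is ample exactly when $\psi^*\cQ$ is an ample bundle, which can perfectly well happen. A concrete instance within the scope of the corollary: $T_{\P^2}$ is a globally generated ample twist of $\cQ_{\G(1,2)}=T_{\P^2}(-1)$, and the finite map $\psi'\colon\P^2\to\G(1,7)$ it defines satisfies $\psi'^*\cQ=T_{\P^2}$, so $\Psi'^*L_{\cQ}$ lies in the \emph{interior} of $\Nef(\P(T_{\P^2}))$ and $\psi'^*(\Nef(\P(\cQ)))\subsetneq\Nef(\P(T_{\P^2}))$ --- even though the corollary does hold for the same bundle with the choice $\psi=\id\colon\P^2\to\G(1,2)$.

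This shows that the statement is sensitive to \emph{which} finite map $\psi$ (equivalently, which globally generated twist $\cE\otimes M\cong\psi^*\cQ$) one takes, and the actual content of the corollary is the verification that, for the $\psi$ arising in each case of the classification, the class $\Psi^*L_{\cQ}=L+\pi^*M$ is nef but \emph{not} ample --- equivalently, that $c_1(\psi^*\cQ)$ equals the nef threshold $\tau$ rather than exceeding it, i.e.\ that the evaluation morphism $\P(\psi^*\cQ)\to\P^m$ contracts a curve. This is exactly what the paper means by ``a case by case analysis of the Fano bundles appearing in the above theorem'': in each case one identifies the second contraction $\pi'\colon\P(\cE)\to X'$ and checks that the evaluation map factors through it (or at least is not finite). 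Once that single point is granted, your cone argument does close --- the images of the two extremal rays are distinct rays on the boundary of the two-dimensional cone $\Nef(\P(\cE))$, forcing equality --- but the step you present as a formal consequence of finiteness is precisely the step that requires the case-by-case input.
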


To our best knowledge it is still an open problem to find examples of rank two Fano bundles on manifolds of Picard number one in which this does not hold.

The paper is organized as follows. Section \ref{sec:setup} is devoted to the preliminary results about vector bundles we have obtained in \cite{MOS3}. In particular we recall the classification of Fano bundles $\cE$ whose projectivization $\P(\cE)$ has two different $\P^1$-bundle structures. If this is not the case, our hypothesis on $H^4(X,\Z)$ implies that $\P(\cE)$ has either a conic bundle structure or a divisorial contraction, according to a structure theorem (see \ref{lem:contractiontype}). In Section \ref{sec:ex} we present examples of Fano bundles
constructed upon subvarieties of Grassmannians, showing in particular that the list of Theorem \ref{thm:fanobdl} is effective.

We then study the two remaining scenarios separately: the existence of a divisorial contraction (Section \ref{sec:proofdiv}) or of a conic bundle structure (Section \ref{sec:proofconic}) on $\P(\cE)$. In each case the classification involves computing the possible discrete invariants of $\cE$ and the base manifold $X$, as well as some other ad hoc arguments.

Finally, we have included an appendix devoted to the relations between Fano bundles and congruences of lines, paying special attention to their varieties of minimal rational tangents. In particular we show that the VMRT of the congruence of trisecant lines to an isomorphic projection of a Severi variety has three linear components, providing negative answers to two problems posed by Hwang and Mok, see \ref{rem:hmconj}.

\subsection{Notation}\label{ssec:cast}
Throughout this paper we will work over the field of complex numbers. Let us introduce some notation regarding complex projective manifolds that will appear in this paper.

As usual, $\P^m$ will denote the projective space of dimension $m$, and $v_k$ the $k$-th Veronese embedding. A non singular quadric of dimension $m$ in $\P^{m+1}$ will be denoted by $\Q^m$, and a Del Pezzo manifold of degree $d$ and dimension $m$ by $V^m_d$.

The Grassmannian of lines in $\P^m$ will be denoted by $\G(1,m)$. The universal quotient bundle on $\G(1,m)$ is the rank two vector bundle $\cQ$ whose determinant is the ample generator of $\Pic(\G(1,m))$ and whose projectivization is the universal family of lines in $\P^m$. Given a subvariety $M\subset\P^m$, $\G(1,m)_{M}$ stands for the subscheme of $\G(1,m)$ parametrizing lines contained in $M$. Note that for $m=2$, we have an isomorphism $\G(1,2)\cong\P^2$, so that $\P(\cQ)$ has two $\P^1$-bundle structures over $\P^2$; note also that in this case $\P(\cQ)$ is the complete flag of the Lie group $\Sl(3)$.

Finally we need to refer to two special subvarieties of grassmannians. Given a symplectic form $L$ in $\C^4$, the subvariety of $\G(1,3)$ parametrizing lines in $\P^3=\P(\C^4)$ that are isotropic with respect to $L$ is denoted by $L\G(1,3)$. It is well known that this variety is a linear section of (the Pl\"ucker embedding of) $\G(1,3)$, hence isomorphic to $\Q^3$. The evaluation morphism on the projectivization $\P(\cQ_{|\Q^3})$ defines a $\P^1$-bundle structure over $\P^3$, which provides an isomorphism $\P^3\cong\G(1,4)_{\Q^3}$. In this case $\P(\cQ_{|\Q^3})$ is the complete flag of the Lie group $\Sp(4)$.

We have a similar situation in the case of the group $\Gg_2$. Up to the choice of a maximal abelian subgroup, this group has two maximal parabolic subgroups, whose corresponding quotients may be identified with the contact homogeneous manifold $K(\Gg_2)\subset\P^{13}$ (that appears as the closed orbit of the adjoint representation of the group $\Gg_2$), and the quadric $\Q^5\subset\P^6$. The quotient $F(\Gg_2)$ of $\Gg_2$ by the Borel subgroup has two structures of $\P^1$-bundle that allow us to describe it as the universal family of lines parametrized by $\G(1,13)_{K(\Gg_2)}\cong\Q^5$, or as the universal family of lines in $\P^6$ that are isotropic with respect to a non-degenerate $4$-form in $\C^7$ (which is parametrized by a submanifold $F\G(1,6)\cong \G(1,6)_{\Q^5}$, isomorphic to $K(\Gg_2)$). The only rank two vector bundle over $\Q^5$ with $c_1=-1$ whose projectivization is $F(\Gg_2)$ is classically known as the {\it Cayley bundle} (cf. \cite{O}).


\section{Setup and preliminary results}\label{sec:setup}

In this paper we will study pairs $(X,\cE)$ satisfying the following assumptions:

\begin{ass}\label{ass:setup}
$X$ is a complex Fano manifold of dimension $n$ such that $H^2(X,\Z)\cong H^4(X,\Z)\cong\Z$, and $\cE$ is a rank $2$ indecomposable Fano bundle on $X$.
\end{ass}

We will write $H_X$ for the ample generator of $\Pic(X)$, $\Sigma$ for the positive generator of $H^4(X,\Z)$, and we will set $-K_{X}=iH_X$, $H_X^2=d\Sigma$, $i,d\in\Z$. In the same way, the Chern classes of $\cE$ will be identified with integers $c_1$ and $c_2$. The Grothendieck projectivization of $\cE$ will be denoted by $Y:=\P(\cE)$. The natural projection from $Y$ to $X$ will be denoted by $\pi$, its general fiber by $f$, the divisor associated to the tautological divisor on $Y$ by $L$, the pull-back $\pi^*H_X$ by $H$ and the relative canonical divisor $K_{Y}-\pi^*K_{X}=-2L+c_1H$ by $K$. 

A {\it Fano manifold} is a smooth projective variety whose anticanonical line bundle is ample. We have assumed that $\cE$ is a {\it Fano bundle}, i.e. that $Y$ is a Fano manifold as well. Equivalently,
$$
0\leq\tau=\tau(\cE):=\inf\left\{t\in\R\,:\,-K+tH \mbox{ is ample}\right\}\leq i.
$$
Since checking this condition on bundles of the form $\cO_X(aH_X)\oplus\cO_X(bH_X)$ is immediate, we have restricted ourselves to {\it indecomposable} bundles (i.e. that are not a direct sum of line bundles).

Note that, by definition, $-K+\tau H$ generates an extremal ray of the {\it nef cone }$\Nef(Y)$ inside of the vector space of numerical classes of $\R$-divisors $N^1(Y)_\R$. Being $Y$ a Fano manifold, generalities on Mori theory tell us that $-K+\tau H$ is a semiample $\Q$-divisor, associated to a contraction $\pi':Y\to X'$. 
The following result follows from \cite[Theorems~2.3,~6.1,~6.3]{MOS3}:

\begin{lem}\label{lem:MOS1}
Let $(X,\cE)$ be a pair satisfying Assumptions \ref{ass:setup}. With the same notation as above:
\begin{enumerate}
\item $\tau$ is strictly positive,
\item every fiber of $\pi'$ has dimension smaller than or equal to one,
\item $\cE$ is stable unless $X=\P^2$ and $Y$ is the blow up of $\Q^3$ along a line, and
\item the discriminant $\Delta:=c_1^2-4c_{2}/d$ is strictly negative.
\end{enumerate}
\end{lem}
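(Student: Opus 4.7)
The proof works on $Y=\P(\cE)$, which has Picard number two by the hypothesis $H^2(X,\Z)\cong\Z$. The cone of curves $\cNE{Y}$ has two extremal rays: one is the $\pi$-fiber class $f$, and the other generates a contraction $\pi':Y\to X'$ associated to $-K+\tau H$. All four assertions then amount to extracting information from this second ray and from the Chow-theoretic hypothesis on $X$.

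For (1), I would argue by contradiction. If $\tau=0$ then $-K=2L-c_1H$ is nef, so after a suitable twist $\cE$ is itself a nef rank two bundle. Because $\Pic(X)\cong\Z$, a splitting criterion for nef bundles on Fano manifolds of Picard number one, combined with the fact that nefness of $-K$ forces the tautological class $L$ to be on the boundary of $\Nef(Y)$, yields a destabilizing line subbundle of $\cE$ of slope $c_1/2$. This produces a decomposition $\cE\cong\cL_1\osum\cL_2$, contradicting indecomposability.

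For (2), I combine the length estimate $\ell(R')\le i$ (obtained from $-K_Y\cdot C=(i-\tau)H\cdot C$ for any curve $C$ contracted by $\pi'$) with the fact that $\pi$ is a $\P^1$-bundle. Since the two extremal rays are distinct, no fiber $F$ of $\pi'$ contains a full $\pi$-fiber, so $\pi|_F$ is finite and $\dim F=\dim\pi(F)$. To exclude $\dim F\ge 2$, I test $[F]$ against $-K_Y$ and $H$: the assumption $H^4(X,\Z)\cong\Z$ forces the codimension-two class $[\pi(F)]$ to be a positive multiple of $\Sigma$, and the extremality of $R'$ together with the ampleness of $-K_Y$ gives numerical relations which, after a direct intersection-theoretic computation, leave only $\dim F\le 1$.

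For (3) and (4), I study a potential destabilizing line subsheaf $\cO_X(aH_X)\hookrightarrow\cE$ with $2a\ge c_1$. Such a subsheaf produces a rational section of $\pi$ of class $L-aH$, hence an effective divisor on the boundary of $\cEff{Y}$. Running through the Mori contractions $\pi'$ compatible with such an effective class while preserving ampleness of $-K_Y$ and the cohomological hypotheses on $X$, and invoking the classification of low-dimensional Fano manifolds of Picard number one, one is left with a single exceptional pair: $X=\P^2$ and $Y$ is the blow-up of $\Q^3$ along a line. Granted stability of $\cE$ in all other cases, (4) follows from Bogomolov's inequality applied to $\cE$ restricted to a smooth surface obtained by cutting $X$ with $n-2$ general members of $|H_X|$, with stability inherited through Mehta--Ramanathan; the strict inequality $c_1^2-4c_2/d<0$ uses genuine stability rather than mere semistability. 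The main obstacle is (3): simultaneously constraining the numerical invariants $(i,d,c_1,c_2)$ by the existence of both a destabilizing subsheaf and a Fano contraction $\pi'$ requires sifting through many numerically plausible configurations and matching the unique survivor to the concrete geometry of the blow-up $\Q^3\to\P^2$.
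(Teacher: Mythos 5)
The first thing to say is that the paper contains no proof of this lemma: it is quoted verbatim from \cite[Theorems~2.3,~6.1,~6.3]{MOS3}, so there is no internal argument to measure yours against, and your attempt has to stand on its own. Judged that way, it identifies the right objects (the second extremal ray, the pseudoeffective threshold, Bogomolov on a surface section) but leaves precisely the hard steps unproved. For (2), the entire content of your argument is the phrase ``after a direct intersection-theoretic computation, \dots\ only $\dim F\le 1$''; that computation \emph{is} the statement. The way one actually closes it is to take an irreducible surface $G$ inside a fiber of $\pi'$ (finite over $\pi(G)$, as you correctly note), write $-K+\tau H=2\xi$ with $\xi=L-\tfrac{c_1-\tau}{2}H$, observe that $\xi|_G\equiv 0$ and that $H\cdot G$ is an effective $1$-cycle on $G$, and use Chern--Wu to turn $\xi^2\cdot G=0$ into $\tfrac{\Delta-\tau^2}{4}(H^2\cdot G)=0$, i.e.\ $\Delta=\tau^2\ge 0$, which then has to be played off against instability/decomposability. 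Nothing of this sort appears in your sketch. For (3) you concede the gap yourself: ``one is left with a single exceptional pair'' is the answer, not an argument. The actual mechanism is that a destabilizing subsheaf $\cO_X(aH_X)\hookrightarrow\cE$ with $2a\ge c_1$ gives an effective divisor $L-aH=\tfrac12(-K+(c_1-2a)H)$ with $c_1-2a\le 0$, which the nonnegativity of the pseudoeffective threshold $\rho$ forces onto the boundary ray with $\rho=0$; it is the analysis of that degenerate boundary configuration (a divisorial contraction with $-K$ proportional to the exceptional divisor) that isolates $X=\P^2$ with $Y$ the blow-up of $\Q^3$ along a line. Without carrying this out, (3) is not proved.

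Two further points are genuinely wrong rather than merely omitted. In (1), ``nefness of $-K$ forces the tautological class $L$ to be on the boundary of $\Nef(Y)$'' is false as stated: $-K=2L-c_1H$ being nef says nothing about $L$ unless $c_1\ge 0$. The correct route is that $\tau\le 0$ makes the normalized twist of $\cE$ by $-\tfrac{c_1}{2}H_X$ nef; since a rank-two bundle satisfies $\cE^\vee\cong\cE(-c_1H_X)$, this normalized twist is self-dual, hence numerically flat, hence trivial on the simply connected Fano $X$, so $\cE$ splits (using $H^1(X,\cO_X)=0$), contradicting indecomposability. In (4), Bogomolov's inequality for a stable restriction gives only $\Delta\le 0$; the strict inequality is not a consequence of ``genuine stability rather than mere semistability'' but of the characterization of the equality case ($\Delta=0$ for a stable bundle forces projective flatness, which on a simply connected manifold forces $\cE$ to be a twist of the trivial bundle, again contradicting indecomposability). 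Finally, your derivation of (4) presupposes stability and therefore does not cover the exceptional pair of (3), where $\cE$ is unstable; there $\Delta<0$ has to be verified directly from the Chern classes of that specific bundle.
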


Denote by $f'$ a rational curve  in $Y$ of minimum  anticanonical degree between curves contained in fibers of $\pi'$. As a consequence of the second item of Lemma \ref{lem:MOS1}, we may apply Theorem 1.2 of \cite{Wis} (see also \cite[Lemma 6.2]{MOS3}) in order to obtain the following result:

\begin{lem}\label{lem:contractiontype} With the same notation as above, then:
\begin{itemize}
 \item[{{\rm (P)}}] either $\pi'$ is a $\P^1$-bundle, $ -K_Y\cdot f'= 2$, or
 \item[{{\rm (D)}}] $\pi'$ is the blow-up of a codimension two smooth subvariety, $-K_Y\cdot f'= 1$, or
\item[{{\rm (C)}}] $\pi'$ is a conic bundle with reducible fibers, $ -K_Y\cdot f'= 1$.
\end{itemize}
In all cases $X'$ is smooth and Fano of Picard number one, and
$$\tau=\frac{K_Y\cdot f'}{H \cdot f'}+i=\frac{K\cdot f'}{H \cdot f'}>0.$$
\end{lem}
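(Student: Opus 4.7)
The plan is to apply Wiśniewski's structure theorem for elementary contractions whose fibers have dimension at most one. By construction, $\pi'$ is the Mori contraction associated to the supporting nef divisor $-K+\tau H$, which generates an extremal ray on the boundary of $\Nef(Y)$; Lemma \ref{lem:MOS1}(2) supplies exactly the fiber-dimension hypothesis of \cite[Thm.~1.2]{Wis}. That theorem asserts that $X'$ is smooth and that $\pi'$ falls into one of the three classes (P), (D), (C) (a conic bundle in which every fiber is irreducible is necessarily a $\P^1$-bundle, which explains why the three cases are disjoint in the statement).

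Next I would read off $-K_Y \cdot f'$, where $f'$ is a curve of minimal anticanonical degree inside a fiber of $\pi'$. In case (P), $f'$ is a general $\P^1$-fiber, giving $-K_Y\cdot f'=2$. In case (D), I would take $f'$ to be a ruling of the exceptional divisor, which is a $\P^1$-bundle over the smooth codimension two center, so that $-K_Y\cdot f'=1$. In case (C), $f'$ can be chosen as an irreducible component of a reducible fiber (a pair of $\P^1$'s meeting transversally at a point), again yielding $-K_Y\cdot f'=1$.

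To establish $\rho(X')=1$ and the Fano property of $X'$, I would note that $\rho(Y)=2$: indeed $\pi:Y\to X$ is a $\P^1$-bundle and $\rho(X)=1$ by the hypothesis $H^2(X,\Z)\cong\Z$. Contracting an extremal ray drops the Picard rank by one, so $\rho(X')=1$. Fano-ness of $X'$ is then standard for an elementary Mori contraction of a Fano manifold: $-K_Y$ is $\pi'$-ample, and the class dual to the remaining extremal ray descends to an ample generator of $\Pic(X')$ by Kleiman's criterion once $\rho(X')=1$ is in hand.

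Finally, the formula for $\tau$ comes from the identity $(-K+\tau H)\cdot f'=0$, valid because $f'$ is contracted by $\pi'$. Using $K=K_Y+iH$ (from $K=K_Y-\pi^*K_X$ together with $-K_X=iH_X$) one obtains
\[
\tau=\frac{K\cdot f'}{H\cdot f'}=\frac{K_Y\cdot f'}{H\cdot f'}+i,
\]
and the strict positivity $\tau>0$ is Lemma \ref{lem:MOS1}(1). The main potential obstacle is merely invoking Wiśniewski's theorem correctly; once the fiber-dimension bound of Lemma \ref{lem:MOS1}(2) is available, the rest is a direct translation of the conclusion of \cite{Wis} into our notation, together with the computations of $K_Y\cdot f'$ above.
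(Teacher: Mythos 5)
Your proposal is correct and follows essentially the same route as the paper, which likewise obtains the trichotomy, the smoothness and Picard number of $X'$, and the values of $-K_Y\cdot f'$ by applying Wi\'sniewski's Theorem 1.2 on the strength of the fiber-dimension bound from Lemma \ref{lem:MOS1}(2), and then reads off $\tau$ from $(-K+\tau H)\cdot f'=0$ together with $K=K_Y+iH$. The paper simply cites \cite{Wis} without spelling out the fiber computations you include, so your write-up is a faithful (slightly more detailed) version of the same argument.
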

This result suggests considering separately the three cases, to which we will refer to as {\it pairs $(X,\cE)$ of types {\rm (P)}, {\rm (D)} or {\rm (C)}}. 

We will also consider the {\it pseudoeffective cone} of $Y$, denoted with $\cEff{Y}$, which is the closure in $N^1(Y)_\R$ of the cone generated by classes of effective divisors. It is generated by the classes of $H$ and of a certain $\R$-divisor of the form $-K+\rho H$, for some $\rho\in\R$ (cf. \cite[Proposition~4.12]{MOS3}): 
 
\begin{lem}\label{lem:rhotau} 
With the same notation as in Lemma \ref{lem:contractiontype}, it follows that:
\begin{enumerate}
\item if $(X,\cE)$ is of type {\rm (P)} or {\rm (C)}, then $\rho=\tau$;
\item if $(X,\cE)$ is of type {\rm (D)} then $0\leq\rho<\tau$ and $-K+\rho H$ is numerically proportional to the exceptional divisor of $\pi'$. 
\end{enumerate} 
 Moreover, in every case $\rho$ and $\tau$ are related by the equation:
\begin{equation}
\label{eq:rhotau}
n\,\arg\left(\tau+\sqrt{\Delta}\right)+\left(\rho+\sqrt{\Delta}\right)=\pi
\end{equation}
In particular, in cases {\rm (P)} and {\rm (C)} $n$ is necessarily equal to $2,3$ or $5$.
\end{lem}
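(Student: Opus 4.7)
The plan is to deduce \eqref{eq:rhotau} from a single intersection-theoretic vanishing on $Y$, and then extract from it the arithmetic restriction on $n$. Items (1) and (2) of the statement are precisely \cite[Proposition~4.12]{MOS3}, so I would cite that result directly, and focus on proving the equation together with the conclusion $n\in\{2,3,5\}$.

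First, I would establish the closed formula
$$\int_Y(-K+t_1H)^n(-K+t_2H)=\frac{\int_XH_X^n}{\delta}\bigl[(t_1+\delta)^n(t_2+\delta)-(t_1-\delta)^n(t_2-\delta)\bigr]=C\cdot\im\bigl[(t_1+\delta)^n(t_2+\delta)\bigr],$$
valid for real $t_1,t_2$, where $\alpha,\beta$ are the formal Chern roots of $\cE$ (so $\alpha+\beta=c_1$ and $\alpha\beta=c_2/d$), $\delta=\alpha-\beta=\sqrt{\Delta}$ is purely imaginary by Lemma~\ref{lem:MOS1}(4), and $C=2\int_XH_X^n/\sqrt{|\Delta|}>0$. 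This follows from the Grothendieck relation $L^2=c_1HL-(c_2/d)H^2$ combined with the standard residue/splitting-principle formula for projective bundles, once one observes that $H^{n+1}=0$ on $Y$ because $H$ is pulled back from the $n$-dimensional base $X$.

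Second, I would verify $(-K+\tau H)^n\cdot(-K+\rho H)=0$ in each of the three cases of Lemma~\ref{lem:contractiontype}. In types (P) and (C), $\pi'$ fibers onto a base $X'$ of dimension $n$, so $-K+\tau H$ is proportional to $\pi'^*A$ for some ample $A$ on $X'$; hence $(-K+\tau H)^n$ is a positive multiple of a fiber class $[f']$ with $(-K+\tau H)\cdot f'=0$, and since item (1) gives $\rho=\tau$, the intersection vanishes. In type (D), $-K+\tau H=\pi'^*A$ with $A$ ample on the $(n+1)$-dimensional $X'$, whereas $-K+\rho H$ is a positive multiple of the exceptional divisor $E$ by item (2); the projection formula then gives $A^n\cdot\pi'_*(-K+\rho H)=0$ because $\pi'_*E=0$. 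Combined with the formula of the previous step, $(\tau+\delta)^n(\rho+\delta)$ is a real number, so $n\arg(\tau+\delta)+\arg(\rho+\delta)\in\pi\Z$. A branch analysis (using $\tau,\rho\ge 0$, the bounds $\arg(\tau+\delta),\arg(\rho+\delta)\in(0,\pi/2]$, and the strict positivity $P(\tau)>0$ in type (D), which forces $\arg(\tau+\delta)<\pi/(n+1)$) pins this sum down to exactly $\pi$, which is equation \eqref{eq:rhotau}.

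Finally, in types (P) and (C) the equation reduces to $\arg(\tau+\delta)=\pi/(n+1)$, equivalently $\tan^2(\pi/(n+1))=|\Delta|/\tau^2\in\Q$ (using $\tau\in\Q$, since $-K+\tau H$ generates a rational extremal ray of $\Nef(Y)$). By Niven's theorem on rational values of trigonometric functions at $\pi$-rational angles, $\tan^2(\pi/m)$ is rational for $m\ge 3$ only when $m\in\{3,4,6\}$; hence $n+1\in\{3,4,6\}$ and $n\in\{2,3,5\}$. The step I expect to be the most delicate is the branch-selection in the third paragraph: ruling out the alternatives $n\arg(\tau+\delta)+\arg(\rho+\delta)\in\{2\pi,3\pi,\dots\}$ requires locating $\tau$ and $\rho$ explicitly relative to the roots $\sqrt{|\Delta|}\cot(k\pi/(n+1))$ of the volume polynomial $P(t)=\int_Y(-K+tH)^{n+1}$, and relies crucially on the descriptions of $\rho$ and $\tau$ furnished by items (1) and (2).
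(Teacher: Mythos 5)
Your argument is correct, and the comparison here is slightly unusual: the paper itself gives no proof of this lemma, importing it wholesale from \cite[Proposition~4.12]{MOS3}. That said, your reconstruction uses exactly the toolkit the paper deploys whenever it invokes the lemma elsewhere (reduction modulo the Chern--Wu relation $K^2=\Delta H^2$ to express intersection numbers as $\frac{2H_X^n}{\sqrt{-\Delta}}\im\bigl[(t_1+\sqrt{\Delta})^n(t_2+\sqrt{\Delta})\bigr]$, the reading of \eqref{eq:rhotau} as $\arg\bigl[(\rho+\sqrt{\Delta})(\tau+\sqrt{\Delta})^n\bigr]=\pi$ in the proof of Proposition~\ref{prop:linesdiv}, and Niven's theorem \cite{Ni} for the restriction on $n$), so it is faithful in spirit to the source. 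The key vanishing $(-K+\tau H)^n(-K+\rho H)=0$ is verified correctly in all three cases, with the type (D) case resting on $\pi'_*E=0$ since the center of the blow-up has codimension two.

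One point in your third paragraph is stated more strongly than it is justified: $P(\tau)>0$ alone does not force $\arg(\tau+\sqrt{\Delta})<\pi/(n+1)$, since $\sin\bigl((n+1)\arg(\tau+\sqrt{\Delta})\bigr)>0$ is also compatible with $(n+1)\arg(\tau+\sqrt{\Delta})\in(2\pi,3\pi)$, etc. What closes this is the observation (which you gesture at in your final sentence) that $-K+tH$ is ample, hence $P(t)>0$, for \emph{every} $t>\tau$; since $\arg(t+\sqrt{\Delta})$ sweeps $(0,\arg(\tau+\sqrt{\Delta}))$ continuously as $t$ decreases from $+\infty$, the polynomial $P$ would vanish at some $t>\tau$ if $\arg(\tau+\sqrt{\Delta})$ exceeded $\pi/(n+1)$. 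Thus $\tau$ is at least the largest root $\sqrt{-\Delta}\cot(\pi/(n+1))$ of $P$, with equality precisely in types (P) and (C); this yields $n\arg(\tau+\sqrt{\Delta})+\arg(\rho+\sqrt{\Delta})\le \frac{n\pi}{n+1}+\frac{\pi}{2}<2\pi$, pinning the sum to $\pi$ as you claim. With that detail made explicit the proof is complete.
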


Denote by $H_{X'}$ the ample generator of $\Pic(X')$, by $H'$ its pull-back to $X'$, by $i'$ the index of $X'$ and by $K':=K_Y-\pi'^*K_{X'}$ the relative canonical divisor, and set
$$
\tau':=\inf\left\{t\in\R\,:\,-K'+t\pi'^*H'\mbox{ is ample}\right\}.
$$

Consider moreover the following positive integers
$$\lambda:=-K_Y\cdot f',\quad\mu:=H\cdot f',\quad\mu':=H'\cdot f,\quad \nu:=K\cdot f',\quad \nu':=K'\cdot f, $$ and note that:
\begin{equation}\label{eq:numu1}
i\mu-\nu=-K_Y\cdot f'=\lambda\quad\mbox{ and }\quad i'\mu'-\nu'=-K_Y\cdot f=2,
\end{equation}
so that we have the following intersection numbers:

\begin{table}[h!]
\begin{tabular}{|c|c|c|c|c|}
\hline
&$-K$&$H$&$-K'$&$H'$\\
\hline
$f$&$2$&$0$&$-\nu'$&$\mu'$\\
\hline
$f'$&$-\nu$&$\mu$&$\lambda$&$0$\\
\hline
\end{tabular}
 \end{table}

We may then write:

\begin{equation}\label{eq:codim1}
 \begin{pmatrix}-K\\H\end{pmatrix}=
 \begin{pmatrix}2&-\nu\\0&\mu\end{pmatrix}
 \begin{pmatrix}-\nu'&1\\\mu'&0\end{pmatrix}^{-1}
 \begin{pmatrix}-K'\\H'\end{pmatrix}
 =A\begin{pmatrix}-K'\\H'\end{pmatrix},
\end{equation}
where:
$$
A:=\begin{pmatrix}\vspace{0.2cm}
-\dfrac{\nu}{\lambda}&\dfrac{2\lambda-\nu\nu'}{\lambda\mu'}\\
\dfrac{\mu}{\lambda}&\dfrac{\mu\nu'}{\mu'\lambda}
\end{pmatrix}.
$$

Note that the sets $\left\{-K,H\right\}$ and  $\left\{-K',H'\right\}$ generate subgroups of $\Pic(Y)$ of indices two and $\lambda$, respectively. In particular $(-2/\lambda)(\mu/\mu')=\det(A)=\pm 2/\lambda$ and we get
\begin{lem}\label{lem:mumu'}
With the same notation as above, the minimal $H'$-degree of a curve contracted by $\pi$ and the minimal $H$-degree of a curve contracted by $\pi'$, are equal.
\end{lem}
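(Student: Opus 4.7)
The plan is to extract the equality $\mu=\mu'$ directly from the change-of-basis matrix $A$ displayed above the statement.

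The key input, essentially already recorded in the lines preceding the lemma, is that $\{-K,H\}$ and $\{-K',H'\}$ are bases of sublattices of $\Pic(Y)$ of indices $2$ and $\lambda$ respectively. The first index follows from $-K=2L-c_1H$, so that $(L,H)\mapsto(-K,H)$ has determinant $2$; the second index is by definition $\lambda=-K_Y\cdot f'$, which measures how far $-K'$ and $H'$ are from being primitive with respect to the $\P^1$-fibration $\pi$. Since $A$ is the change-of-basis matrix transforming $\{-K',H'\}$ into $\{-K,H\}$ inside the rank-two lattice $\Pic(Y)\subset N^1(Y)_\R$, a covolume comparison forces $|\det(A)|=2/\lambda$.

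On the other hand, the explicit formula for $A$ yields, after expansion,
\begin{align*}
\det(A) &= \left(-\tfrac{\nu}{\lambda}\right)\cdot\tfrac{\mu\nu'}{\mu'\lambda} \;-\; \tfrac{\mu}{\lambda}\cdot\tfrac{2\lambda-\nu\nu'}{\lambda\mu'} \\
 &= \tfrac{-\mu\nu\nu' \,-\, 2\mu\lambda \,+\, \mu\nu\nu'}{\lambda^{2}\mu'} \;=\; -\tfrac{2\mu}{\lambda\mu'}.
\end{align*}
Equating absolute values gives $2\mu/(\lambda\mu')=2/\lambda$, i.e.\ $\mu=\mu'$, both being positive integers by definition (the curves $f,f'$ are non-constant in $Y$ and $H,H'$ are pull-backs of the ample generators on $X,X'$).

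The potentially delicate step is the lattice index claim $[\Pic(Y):\Z\langle -K',H'\rangle]=\lambda$; this requires observing that, relative to the basis $(L,H)$, the vector $-K'$ has the form $\lambda L + (\text{integer})\cdot H$ up to a unit, which is already implicit in the table of intersection numbers with $f$ and $f'$ preceding the lemma. Beyond that point the argument is only the short determinant computation above, so there is no real obstacle.
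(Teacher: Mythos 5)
Your proof is correct and is precisely the paper's argument: the authors likewise note that $\{-K,H\}$ and $\{-K',H'\}$ generate sublattices of $\Pic(Y)$ of indices $2$ and $\lambda$, compute $\det(A)=(-2/\lambda)(\mu/\mu')=\pm 2/\lambda$, and conclude $\mu=\mu'$. Your additional remarks justifying the index claim only make explicit what the paper leaves as an assertion.
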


The following proposition gives the complete list of Fano bundles whose projectivization has a second $\P^1$-bundle structure. Note that the list corresponds to items (P1) to (P5) in Theorem \ref{thm:fanobdl}:

\begin{prop}\label{prop:typeP}
With the same notation as above, assume that $(X,\cE)$ is of type {\rm (P)}. Then $Y$ is isomorphic to $G/B$ where $G$ is a simple Lie algebra of type $A_2$, $B_2$ or $\Gg_2$, and $B$ is its Borel subgroup. Each contraction of $Y$ corresponds to one of the two possible choices of a maximal parabolic subgroup of $G$. 
\end{prop}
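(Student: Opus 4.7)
The plan is to exploit the two $\P^1$-bundle structures $\pi\colon Y\to X$ and $\pi'\colon Y\to X'$, both onto smooth Fano manifolds of Picard number one by Lemma \ref{lem:contractiontype}. Since $\pi'$ has relative dimension one, $\dim X' = n = \dim X$, and Lemma \ref{lem:rhotau}(2) already restricts $n$ to $\{2,3,5\}$ --- precisely the dimensions of the two Grassmannian-type quotients of $\Sl(3)$, $\Sp(4)$ and $\Gg_2$. Moreover, since $\rho=\tau$ in case (P) by Lemma \ref{lem:rhotau}(1), the relation \eqref{eq:rhotau} collapses to an equation of the shape $(n+1)\,\arg(\tau+\sqrt{\Delta})=\pi$, which determines $\tau$ from $\Delta$ in each of the three admissible dimensions.

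I would then proceed by case analysis on $n$. For $n=2$, the only smooth Fano surface with $H^2\cong\Z$ is $\P^2$, so $X\cong X'\cong\P^2$; combining $\tau\le i$, the integrality of the intersection numbers appearing in the table following \eqref{eq:numu1}, and the constraint just derived from \eqref{eq:rhotau}, one is reduced (up to twist) to the universal quotient bundle $\cQ$ on $\G(1,2)\cong\P^2$, so that $Y\cong\Sl(3)/B$. For $n=3$, I would run Iskovskikh's classification of Fano threefolds with $\Pic\cong\Z$ and use the fact that both $X$ and $X'$ must appear in that list with numerical invariants forced by \eqref{eq:codim1}, Lemma \ref{lem:mumu'} and $\tau\le i$; this narrows $(X,X')$ to $(\P^3,\Q^3)$ and pins $\cE$ down, up to twist, as the restriction of $\cQ$ from $\G(1,3)$ to $L\G(1,3)\cong\Q^3$, identifying $Y$ with $\Sp(4)/B$. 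For $n=5$, the analogous analysis based on the classification of Fano fivefolds satisfying Assumptions \ref{ass:setup} with Picard number one produces the Cayley bundle on $\Q^5$ (dually, its counterpart on $K(\Gg_2)$), realising $Y$ as $\Gg_2/B$.

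The main obstacle will be carrying out the case analysis for $n=3$ and $n=5$: in those dimensions several Fano manifolds are a priori compatible with one of the $\P^1$-bundle structures, and the core of the argument is to couple the integrality of $\mu,\mu',\nu,\nu',\lambda$, the bound $\tau\le i$, Lemma \ref{lem:mumu'} and the trigonometric constraint coming from \eqref{eq:rhotau} tightly enough to exclude every other candidate for $X$ and $X'$. Once $X$, $X'$ and the Chern classes of $\cE$ are all determined, the identification of $Y$ with $G/B$ follows, since each such flag variety is characterised (among smooth projective manifolds of Picard number two) by its pair of $\P^1$-bundle contractions onto the relevant pair of generalised Grassmannians, and the two projections of $Y$ then correspond precisely to the two choices of maximal parabolic subgroup of $G$.
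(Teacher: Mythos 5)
Your outline (restrict $n$ to $\{2,3,5\}$, then identify $X$, $X'$ and $\cE$) matches the paper's, but the engine that makes the paper's argument work is absent from your plan, and the substitute you propose for $n=5$ does not exist: there is no classification of Fano fivefolds of Picard number one to run through. The paper never invokes such a classification (nor Iskovskikh's list for $n=3$). Instead it expands $-KH^n$ in terms of $-K'$ and $H'$ via \eqref{eq:codim1}, reduces modulo the Chern--Wu relation $K'^2=\Delta'H'^2$, and uses \eqref{eq:rhotau} to get $-KH^n=\bigl(\mu\tau'/(2\cos(\pi/(n+1)))\bigr)^{n-1}(-K'H'^n)$; combining this with its mirror image yields the key identity $\mu^2\tau\tau'=4\cos^2\bigl(\pi/(n+1)\bigr)\in\{1,2,3\}$. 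This forces $\mu=1$ and $\{\tau,\tau'\}\subset\{1,2,3\}$ with small product, hence (since $i=\tau+2/\mu$ and $i'=\tau'+2/\mu'$ in case (P) by \eqref{eq:numu1}) pins the indices of $X$ and $X'$ at values high enough that Kobayashi--Ochiai and the coindex-three classification identify the manifolds with no general classification needed. Your ``trigonometric constraint'' $\arg(\tau+\sqrt{\Delta})=\pi/(n+1)$ alone only ties $\Delta$ to $\tau$; together with integrality, $\tau\le i$ and Lemma \ref{lem:mumu'} it does not bound $\tau$ or $\mu$, so I do not see how your ingredients exclude, say, low-index candidates for $X$ in dimension five.

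There is a second gap at the end. You assert that each $G/B$ is characterised among Picard-number-two manifolds by its pair of $\P^1$-bundle contractions onto the two generalised Grassmannians; that is a nontrivial claim which you would have to prove, and it presupposes that you have already identified both targets \emph{together with the bundles inducing the projections}. The paper closes the argument differently and more cheaply: $\cE$ is stable by Lemma \ref{lem:MOS1}(3), and the universal quotient bundle on $\G(1,2)$, the bundle on $\P^3\cong\G(1,4)_{\Q^3}$, and the Cayley bundle on $\Q^5$ are each the unique stable bundle with the computed Chern classes (Hulek, Okonek--Schneider--Spindler, Ottaviani). You need either that uniqueness statement or an honest proof of your characterisation of $G/B$ by its two contractions.
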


Let us sketch the proof of this result here; we refer the interested reader to \cite[Theorem.~6.5]{MOS3} for further details.

\begin{proof}
By assumption we may consider $Y$ as the projectivization of a Fano bundle $\cE'$ over $X'$; let us denote its discriminant by $\Delta'$. Using equation (\ref{eq:codim1}), the integer $-KH^n$ can be written in terms of $-K'$ and $H'$. Since $H'^n=0$ and $K'^2=\Delta'H'^2$, we may reduce this polynomial to a multiple of $-K'H'^n$:
$$
-KH^n=\left(\dfrac{\mu}{2}\right)^{n-1}\dfrac{\im\left(\tau'+\sqrt{\Delta'}\right)}{\sqrt{-\Delta'}}(-K'H'^{n})
$$ 
Note that $\Delta<0$ by Lemma \ref{lem:MOS1}. Moreover, applying Equation (\ref{eq:rhotau}) from Lemma \ref{lem:rhotau} one may rephrase the above equality as:
$$
-KH^n=\left(\dfrac{\mu\tau'}{2\cos\left(\frac{\pi}{n+1}\right)}\right)^{n-1}(-K'H'^{n}).
$$
Joining this formula with its symmetric, obtained by writing $-K'H'^n$ in terms of $-KH^n$, we get:
$$
\mu^2\tau\tau'=4\cos^2\left(\frac{\pi}{n+1}\right)=1,2,3\mbox{, for }n=2,3,5\mbox{, respectively.}
$$
From this one may easily obtain that, up to exchange  $X$ and $X'$, $X$ is either $\P^2\cong\G(1,2)$, $\P^3=\G(1,4)_{\Q^3}$ or $\Q^5=\G(1,13)_{K(\Gg_2)}$, and that the Chern classes of $\cE$ coincide with those of the corresponding universal quotient bundles. We conclude by noting that $\cE$ is stable by Lemma \ref{lem:MOS1} and that these universal bundles are determined by their Chern classes among stable bundles (cf. \cite[8.1]{H}, \cite[Lemma~4.3.2]{OSS}, \cite{O}). 
\end{proof}

Throughout the rest of the paper we will always assume that $\lambda=1$, i.e. that $(X,\cE)$ is of type {\rm (D)} or {\rm (C)}. Hence we may write:
\begin{equation}\label{eq:H2DC}
A=\begin{pmatrix}\vspace{0.2cm}
-\nu&-\dfrac{\nu\nu'-2}{\mu}\\
\mu&{\nu'}
\end{pmatrix}, \quad A^{-1}=\begin{pmatrix}\vspace{0.2cm}
-\dfrac{\nu'}{2}&\dfrac{2-\nu\nu'}{2\mu}\\
\dfrac{\mu}{2}&\dfrac{\nu}{2}
\end{pmatrix}.
\end{equation}
 We finish this section by stating an straightforward lemma that will be useful later.

\begin{lem}\label{lem:muodd}
With the same notation as above, it follows that:
\begin{enumerate}
\item if $(X,\cE)$ is of type {\rm (D)} or {\rm (C)}, then $c_1-i$ and $\mu$ are odd, and
\item if $(X,\cE)$ is of type {\rm (D)}, then 
\begin{equation}\label{eq:rho}
\rho=\dfrac{\nu\nu'-2}{\mu\nu'}.
\end{equation}
\end{enumerate}
\end{lem}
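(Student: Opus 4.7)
The plan is to deduce both assertions by direct intersection-theoretic computation using the data of Lemmas~\ref{lem:contractiontype} and~\ref{lem:rhotau}, together with the blow-up canonical formula in the case of type~(D).

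For (1), I would begin by rewriting $K_Y$ in terms of $L$ and $H$: since $K=-2L+c_1H$ and $\pi^*K_X=-iH$, one has $K_Y=-2L+(c_1-i)H$. Intersecting with the extremal curve $f'$ and invoking $\lambda=-K_Y\cdot f'=1$ from Lemma~\ref{lem:contractiontype} (which applies in types~(D) and~(C)) yields
$$
(c_1-i)\mu \;=\; 2(L\cdot f') - 1,
$$
which is odd, forcing both $c_1-i$ and $\mu$ to be odd.

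For (2), by Lemma~\ref{lem:rhotau}(2) the class $-K+\rho H$ is numerically proportional to the exceptional divisor $E$ of the blow-up $\pi'\colon Y\to X'$. Since $X'$ is smooth and the center has codimension two, the standard canonical bundle formula gives $K_Y=\pi'^*K_{X'}+E$, so that $E=K'$. Consequently, in the basis $\{-K',H'\}$ of $N^1(Y)_\R$ (which is genuinely a basis, since $\det A=-2\neq 0$ in~(\ref{eq:H2DC})), the class $E$ has vanishing $H'$-coordinate, and therefore so must $-K+\rho H$. Expanding $-K$ and $H$ via $A$ gives the $H'$-coefficient of $-K+\rho H$ as $\rho\nu'-(\nu\nu'-2)/\mu$; setting this to zero isolates $\rho$ and yields~(\ref{eq:rho}). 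As a byproduct one observes $\nu'\neq 0$, since otherwise this coefficient would reduce to the nonzero quantity $2/\mu$.

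The argument is essentially bookkeeping with data already assembled. The only step requiring genuine external input is the identification $E=K'$ via the canonical bundle formula for a smooth blow-up of a smooth codimension-two center; that is the point I would flag as the main (modest) obstacle.
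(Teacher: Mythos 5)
Your proof is correct and follows essentially the same route as the paper: for (1) your identity $(c_1-i)\mu=2(L\cdot f')-1$ is precisely the paper's observation that the first coordinate $(c_1\mu-\nu)/2=L\cdot f'$ of the integral divisor $L$ in the basis $\{-K',H'\}$ is an integer, combined with $i\mu-\nu=\lambda=1$. For (2) the paper likewise invokes $E=K'$ (from the smooth codimension-two blow-up formula) and the proportionality of $-K+\rho H$ to $E$, which amounts to the same vanishing of the $H'$-coordinate that you compute.
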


\begin{proof}
The first assertion follows from Equation (\ref{eq:numu1}) and the fact that $(-K+c_1H)/2=L$ is an integral divisor, so that
$$
\left(\dfrac{1}{2}\quad\dfrac{c_1}{2}\right)A=\left(\dfrac{c_1\mu-\nu}{2}\quad\dfrac{c_1\nu'\mu-\nu\nu'+2}{2\mu}\right)\in\Z^2.
$$
The second follows from the definition of $\rho$, since in case {\rm (D)} we have $E=K'$.
\end{proof}


\section{Examples}\label{sec:ex}

This section is devoted to the construction of examples of rank two Fano bundles on Fano manifolds satisfying Assumption \ref{ass:setup}. Since it is well known that a rank two vector bundle on a projective manifold may be regarded as the pull-back of a twist of the universal quotient bundle on a grassmannian of lines, the natural way of constructing examples of that kind is by considering special subvarieties of Grassmannians $\G(1,m)$, as well as finite coverings of them. 

\begin{ex}\label{ex:compint} \textbf{Complete intersections in Grassmannians}.\\
Let $X$ be a complete intersection of $k$ general hypersurfaces of degrees $d_1,\dots,d_k$ in the Pl\"ucker embedding of $\G(1,m)$, and assume that $k\leq 2m-5$ and $d_1+\dots+d_k< m$. Then $X$ is, by Lefschetz Theorem, a Fano manifold of Picard number one, and by adjunction, the restriction $\cQ_{|X}$ is a Fano bundle on $X$. Moreover, by \cite[Thm. 3.1]{Ma}, $\cQ_{|X}$ is semistable of degree one, hence it is also indecomposable.

a) If $k\leq m-2$, then the restriction of the evaluation map $\pi':\P(\cQ_{|X}) \to \P^m$ is a fiber type contraction, and the nef cones of $\P(\cQ_{|X})$ and $\P(\cQ)$ coincide. The general fiber of $\pi'$ is a complete intersection of type $(d_1, \dots, d_k)$ in $\P^{m-1}$.
On the other hand, by Lefschetz Theorem again, this procedure cannot provide examples of Fano bundles over manifolds with $H^4(X,\Z)\cong\Z$ unless $k=2m-5$. More concretely, one obtains precisely the submanifolds listed as (P2) and (C6) in Theorem \ref{thm:fanobdl}.

b) If $k=m-1$ then $d_1= \dots = d_k=1$ and $X$ is a general linear section of $\G(1,m)$. But then the map $\pi'$ is determined by a morphism $\cO_{\P^m}^k\to\Omega_{\P^m}(2)$, whose degeneracy locus has been described by Bazan and Mezzetti (see \cite[Section 2]{BM} and \cite[Proposition 2.4]{DP}): using their results it follows that in this case  $\pi':\P(\cQ_{|X}) \to \P^m$
is a divisorial contraction, and the image of the exceptional divisor has codimension two (it is either a rational variety if $m$ is even or the closure of a scroll parametrized by an open set of a hypersurface in $\P^{m-2}$ if $m$ is odd). 
Finally, imposing the condition $H^4(X,\Z)\cong \Z$ leaves us with the case $m=4$ and $k=3$, which is case
(D3) in Theorem \ref{thm:fanobdl}.\qed
\end{ex}

In general, given any finite morphism $\psi:X\to\G(1,m)$ from a Fano manifold $X$ of Picard number one, the bundle $\psi^*(\cQ)$ is Fano if $c_1(\psi^*(\cQ))$ is smaller than the index $i$ of $X$. In the next example we will construct subvarieties of Grasmannians satisfying this property that are not complete intersections, with special emphasis on those satisfying $H^4(X,\Z)=\Z$. 

\begin{ex}\label{ex:ncompit}\textbf{Other subvarieties of Grassmannians}.

a) The subvarieties described in (P3), (P4) and (P5) of Theorem \ref{thm:fanobdl} provide the first examples 
of Fano bundles constructed upon subvarieties of Grassmannians that are not complete intersections. Note that in the three cases the degree of the restriction of $\cQ$ equals $i-2$, hence we obtain more examples of Fano bundles by considering general hyperplane sections $X$ of the manifolds (P3), (P4) and (P5).
However, the only case satisfying $H^2(X,\Z)\cong H^4(X,\Z)\cong\Z$ (see Lemma \ref{lem:HKG2} below) is
 the hyperplane section of (P3):  this is (D1), classically known as the Veronese surface parametrizing the set of lines in $\Q^3$ meeting a fixed one $\ell$. 
 
Note that the manifolds (P3) and (P5) are examples of {\it linear congruences of lines}, i.e.  subvarieties of dimension $m-1$ of $\G(1,m)$, obtained by cutting $\G(1,m)$ with a (not necessarily general) linear space. To our best knowledge, the classification of linear congruences of lines with Picard number one is still open. See Appendix \ref{sec:appendix} for more details.
 
b) In the same manner, Quadric Grasmannians $\G(1,m)_{\Q^{m-1}}$ and Isotropic Grassmanians $L\G(1,m)$, as well as some of their complete intersections, provide examples of Fano submanifolds of Grassmannians of Picard number one, on which the corresponding restrictions of $\cQ$ are Fano bundles.  Moreover, apart of the obvious exceptions, they are not complete intersections in Grassmannians. However, they do not provide new examples satisfying Assumption \ref{ass:setup}.

c) Case (D2) in Theorem \ref{thm:fanobdl} corresponds to the embedding of a Veronese surface $v_2(\P^2)$ in $\G(1,3)$, which is clearly not a complete intersection. It may be described as the family of secant lines to a rational normal cubic $\Gamma_3\subset\P^3$. In fact, since $\Gamma_3$ has no trisecants, it follows that the family is parametrized by the second symmetric power of $\Gamma_3$, that is $\P^2$, and it is easy to see that the restriction of the Pl\"ucker embedding to this $\P^2$ is the complete linear system $|\cO_{\P^2}(2)|$. Set, as usual, $\cE:=\cQ_{|v_2(\P^2)}$.  Since for every point $P$ in $\P^3\setminus\Gamma_3$ there exists a unique line secant to $\Gamma_3$ passing by $P$, then the evaluation morphism $\pi':\P(\cE)\to\P^3$ is the blow-up of $\P^3$ along $\Gamma_3$. Its exceptional divisor $E$ is isomorphic to $\Gamma_3\times\Gamma_3$ and the restriction $\pi_{|E}$ is the natural quotient onto $\P^2$, which is a two-to-one finite morphism. \qed
\end{ex}

Finally, we will show some examples of Fano bundles that arise via non injective morphisms $\psi^*:X\to\G(1,m)$.

\begin{ex}\label{ex:P2}\textbf{Finite coverings}.

a) Cyclic coverings. Let $X_1$ be a Fano manifold with $H^2(X_1,\Z)\cong\Z$ and $\cE_1$ an indecomposable Fano bundle over $X_1$. Let $\psi:X \to X_1$ be the $d$-cyclic covering of $X_1$ determined by the ample generator $H_{X_1}$ of $\Pic(X_1)$. Let $Y_1$ and $Y$ be the projectivizations of $\cE_1$ and $\cE:=\psi^*\cE_1$, respectively, and let $H_1$ be the pull-back of $H_{X_1}$ to $Y_1$. Since $-K_Y = \psi^*(-K_{Y_1} -(d-1)H_1)$, then $\cE$ is Fano whenever $-K_{Y_1} -(d-1) H_1$ is ample or, equivalently, if $\tau(\cE_1)+d-1$ is smaller than the index of $X_1$. 

For instance, starting from (P1), (P2), (P3), (P4) or (P5) the only possibility is $d=2$ and the output of this process can be described in terms of the following geometric construction. Consider the second $\P^1$-bundle structure of $Y_1$, $\pi_1':Y_1=\P(\cE_1')\to X_1'$, and let $H_{X_1'}$ be the ample generator of $\Pic(X_1')$.
Note that in each case 
the bundle $\cE'_1\otimes\cO_{X'_1}(sH_{X'_1})$ is globally generated and not ample for some $s$.
Setting
$$
\cE':=\cE'_1\otimes\cO_{X_1'}(sH_1')\oplus\cO_{X_1'},$$
the image of the second contraction of $P:=\P(\cE')$ is, by construction, the cone $\cC(X_1)$ of $X_1$ over a point $O$. Let $Y$ be a general divisor in the linear system $|\cO_P(2)|$, $\pi$ be the restriction of the second contractions of $P$, with image $X:=\pi(Y)$, and $\pi':Y\to X':=X_1'$ the restriction of the natural projection.
$$
\xymatrix{
X\ar@{^{(}->}[d]&Y\ar@{^{(}->}[d]\ar[l]_{\pi\hspace{0cm}}\ar[r]^{\hspace{0cm}\pi'}&X'\\
\cC(X_1)&P=\P(\cE')\ar[l]\ar[r]&X_1'\ar@{=}[u]\\
X_1\ar@{^{(}->}[u]&Y_1=\P(\cE_1')\ar@{^{(}->}[u]\ar[l]_{\pi_1\hspace{0.4cm}}\ar[r]^{\hspace{0.4cm}\pi_1'}&X_1'\ar@{=}[u]}
$$
 It follows that $Y$ is a $\P^1$-bundle over $X$ via $\pi$ and a conic bundle over $X'$ via $\pi'$.
Furthermore, the linear projection from the point $O$ defines a two-to-one morphism $\psi:X\to X_1$, so that $\pi:Y\to X$ is the fiber product of $\pi_1:Y_1\to X_1$ over $\psi$:
$$
\xymatrix{
X\ar[d]_\psi&Y\ar[l]_{\pi}\ar[d]\\
X_1&Y'\ar[l]_{\pi_1}}
$$
Using this construction we obtain cases
(C2), (C3), (C4) and (C5) out of the corresponding (P)'s. Starting from
(P1) we obtain a Fano bundle over $\Q^2\cong\P^1\times\P^1$.

Note that (C6) gives another example of a Fano bundle on $V_4^3$ of type (C). Although its Chern classes are the same as those of the bundle corresponding to case (C2),
the bundles themselves are different, since the corresponding second contractions are clearly different (see Section \ref{sec:proofconic} below).

b) Non-cyclic coverings. 
Let $P$ be the cartesian product $\P^2\times\P^2$, $Y\subset P$ be a general divisor on $P$ of type $(1,2)$ and $\pi$ and $\pi'$ be the restrictions to $Y$ of the natural projections.
$$
\xymatrix{\P^2\ar@{=}[d]&P=\P^2\times\P^2\ar[l]\ar[r]&\P^2\ar@{=}[d]\\
\P^2&Y\ar@{^{(}->}[u]\ar[l]_{\pi}\ar[r]^{\pi'}&\P^2}
$$
By construction $\pi$ is a $\P^1$-bundle and $\pi'$ is a conic bundle. Following \cite{SW1}, the $\P^1$-bundle structure on $Y$ is given by the vector bundle $\cE$ on $\P^2$ that appears as the cokernel of a surjective morphism
$
\cO_{\P^2}(-2)\to\cO_{\P^2}^3.
$
In other words, the manifold $Y$ is the incidence variety of the correspondence between points of $\P^2$ and conics of a $2$-dimensional base point free linear subsystem of $|\cO_{\P^2}(2)|$. This is case (C1) in Theorem \ref{thm:fanobdl}.
\qed
\end{ex}


\section{Rank two Fano bundles with a divisorial contraction}\label{sec:proofdiv}

The aim of this section is classifying pairs $(X,\cE)$ of type (D). 
Before beginning, we need to introduce some notation regarding codimension two cycles on $X$, $X'$ and $Y$. 
\begin{notn}\label{notn:D}
Let $T\subset X'$ be the center of the blow-up $\pi'$, and $E:=\pi'^{-1}(T)\subset Y$ the exceptional divisor.
Following \cite[p.~605]{GH}, $H^4(Y,\Z)$ is, on one hand, generated by the $\Z$-basis 
$$\left\{LH, H^2/d\right\}$$ 
and, on the other, naturally isomorphic to 
\begin{equation}\label{eq:codim2blow}
H^4(X',\Z)\oplus H^4(E,\Z)/H^4(T,\Z)\cong H^4(X',\Z)\oplus (-E_{|E})H^2(T,\Z).
\end{equation}
Note that the last isomorphism follows from Chern-Wu relation on the $\P^1$-bundle $E\to T$. In particular we obtain that $H^4(X',\Z)$ is freely generated by a (positive) cycle $\Sigma'$ and $H^2(T,\Z)$ is freely generated by the class of an ample divisor $H_T$ of the form $\frac{1}{b}{H'}_{|T}$, for some positive integer $b$. 
Let us also introduce the integer $d'$, defined by $H'^2=d'\Sigma'$.
\end{notn}

Note that this description  provides a second $\Z$-basis of $H^4(Y,\Z)$, namely 
$$\left\{-E H'/b,H'^2/d'\right\}.$$
The relation between the two $\Z$-bases is 
$$
\left(\begin{matrix}\vspace{0.3cm}-EH'/b\\H'^2/d'\end{matrix}\right)=B
\left(\begin{matrix}\vspace{0.3cm}LH\\ H^2/d\end{matrix}\right).
$$
where, using (\ref{eq:codim1}) and Chern-Wu relations:
$$
B:=\begin{pmatrix}\vspace{0.3cm}\displaystyle\frac{\nu\nu'-1}{b} &\displaystyle \frac{d}{4b\mu}\left(\nu'\mu^2\Delta+2c_1(1-\nu\nu')\mu+\nu(\nu\nu'-2)\right)
\\\displaystyle \frac{\nu\mu}{d'} & \displaystyle\frac{d}{4d'}(\Delta\mu^2-2c_1\nu\mu +\nu^2)\end{pmatrix}.
$$

As a first step towards classification, we will show that the images into $X$ and $X'$ of the fibers of $\pi'$ and $\pi$, respectively, have degree one with respect to the ample generator of the corresponding Picard group.

\begin{prop}\label{prop:linesdiv}
Let $(X,\cE)$ be a pair of type {\rm (D)}. With the same notation as in Section \ref{sec:setup}, it follows that $\mu=H\cdot f'=H'\cdot f=1$, and in particular $\tau=\nu\in\Z$ and $\tau'=\nu'\in\Z$.
\end{prop}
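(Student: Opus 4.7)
The equality $H\cdot f' = H'\cdot f$ is already given by Lemma~\ref{lem:mumu'}, so $\mu=\mu'$; the content of the statement is that this common value equals $1$. Once $\mu=1$ is proved, the identities $\tau=\nu/\mu=\nu\in\Z$ and $\tau'=\nu'/\mu=\nu'\in\Z$ are immediate from Lemma~\ref{lem:contractiontype}. My strategy is to exploit the two integral bases of $H^4(Y,\Z)$ set up in Notation~\ref{notn:D}: the change-of-basis matrix $B$ displayed there must lie in $\mathrm{GL}_2(\Z)$, and both the determinant condition and the individual integrality of the entries will yield enough constraints to pin $\mu$ down.

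The first step is to compute $\det(B)$ directly from the formula in Notation~\ref{notn:D}. The mixed $c_1$-terms cancel cleanly, only the combination $\nu^2-\Delta\mu^2$ survives, and one gets
\[
\det(B)=\frac{d\,(\nu^2-\Delta\mu^2)}{4bd'}=\pm 1.
\]
Since $\Delta<0$ by Lemma~\ref{lem:MOS1}(4), the left side is strictly positive, so in fact $\det(B)=+1$ and we record the fundamental relation $4bd'=d(\nu^2-\Delta\mu^2)$. The second step is to extract the integrality of the entries $(1,1)$ and $(2,1)$ of $B$, which yields the divisibilities $b\mid\nu\nu'-1$ and $d'\mid\nu\mu$. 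Note that $\nu\nu'\geq 2$, for otherwise formula~(\ref{eq:rho}) of Lemma~\ref{lem:muodd} would give $\rho<0$, contradicting Lemma~\ref{lem:rhotau}; hence $b\leq \nu\nu'-1$, and combined with $d'\leq\nu\mu$ and the determinant relation this produces the polynomial inequality
\[
d\,(\nu^2-\Delta\mu^2)\;\leq\;4\nu\mu\,(\nu\nu'-1).
\]

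The third step is to translate Equation~(\ref{eq:rhotau}) of Lemma~\ref{lem:rhotau} into a quantitative bound on $\Delta$. Since $\rho\geq 0$ we have $\arg(\rho+\sqrt{\Delta})\leq \pi/2$, and therefore the identity $n\arg(\tau+\sqrt{\Delta})+\arg(\rho+\sqrt{\Delta})=\pi$ forces $\arg(\tau+\sqrt{\Delta})\geq\pi/(2n)$, i.e.\
\[
-\Delta\,\mu^2 \;\geq\; \nu^{2}\tan^{2}(\pi/(2n)).
\]
Plugging this lower bound for $-\Delta\mu^2$ back into the inequality above, substituting $\nu=i\mu-1$ and $\nu'=i'\mu-2$ from~(\ref{eq:numu1}), invoking the Kobayashi--Ochiai bounds $i\leq n+1$, $i'\leq n+2$, and using that $\mu$ is odd by Lemma~\ref{lem:muodd}(1), I expect to rule out all $\mu\geq 3$.

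The main obstacle I anticipate is this final arithmetic elimination: the resulting constraint in $(\mu,i,i',d,b,d')$ is not transparently violated for every admissible $n$. A careful analysis should split off the boundary case $\rho=0$ (equivalent to $\nu\nu'=2$, in which $b=1$ and the bound $b\leq\nu\nu'-1$ is tight), where the rationality of $-\Delta=(4/\mu^{2})\,\nu^{2}\tan^{2}(\pi/(2n))$ forces $\tan^{2}(\pi/(2n))\in\Q$ and hence $n\in\{2,3\}$, each of which can be ruled out by checking the possible values of $d$ coming from $\Delta=c_1^2-4c_2/d$. An alternative route that may make the elimination cleaner is to compute the top intersections $H'^{n+1}$ and $H'^{n}H$ on $Y$ both through $\pi$ (which yields closed-form Chebyshev-like expressions in $\cos\arg(\tau+\sqrt{\Delta})$) and through $\pi'$, and then combine the two identities to solve for $\mu$ directly.
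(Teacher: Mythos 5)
Your setup is sound and overlaps with the paper's preparatory lemma: the unimodularity of $B$ does give $4bd'=d(\nu^2-\Delta\mu^2)$ with positive sign, and the integrality of $B_{1,1}$ and $B_{2,1}$ is exactly what the paper exploits. But there is a genuine gap at the decisive step. You convert the divisibilities $b\mid \nu\nu'-1$ and $d'\mid\nu\mu$ into the inequality $d(\nu^2-\Delta\mu^2)\leq 4\nu\mu(\nu\nu'-1)$ and hope that, together with $-\Delta\mu^2\geq\nu^2\tan^2(\pi/(2n))$, this eliminates $\mu\geq 3$. It cannot: substituting $\nu=i\mu-1$, $\nu'=i'\mu-2$, the left-hand side of your inequality grows linearly in $\mu$ while the right-hand side grows cubically, so the constraint becomes \emph{weaker} as $\mu$ increases and is vacuous for all large $\mu$. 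No choice of Kobayashi--Ochiai bounds rescues this; the inequality route is a dead end, and your own flagged ``main obstacle'' is in fact fatal. The auxiliary remark about the boundary case $\rho=0$ does not repair the general case.

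What the paper does instead is arithmetic rather than metric. It uses the divisibility data not to bound sizes but to establish $\gcd(\bar d,\mu)=1$ (where $\bar d=d/\gcd(d,\mu)$): from $B_{1,1}\in\Z$ and $\nu\nu'\equiv 1\pmod\mu$ one gets $\gcd(b,\mu)=1$; reducing the determinant relation modulo $\mu$ gives $\gcd(d,\mu)=\gcd(d',\mu)$; and $B_{2,1}\in\Z$ finishes the claim. Then, crucially, it uses the \emph{full equality} in Equation~(\ref{eq:rhotau}) --- not merely the inequality $\arg(\tau+\sqrt\Delta)\geq\pi/(2n)$ --- rewritten via $\rho=\tau-2/(\nu'\mu)$ as the exact identity $\nu'\mu\,\im\bigl((\tau+\sqrt\Delta)^{n+1}\bigr)=2\,\im\bigl((\tau+\sqrt\Delta)^{n}\bigr)$. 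Multiplying by $\mu^n d^{\lfloor n/2\rfloor}/\sqrt{-\Delta}$ makes every term an integer, and reducing modulo $\mu$ (using $\nu\equiv-1\pmod\mu$) collapses the identity to $2\bar d^{\lfloor n/2\rfloor}\equiv 0\pmod\mu$; since $\mu$ is odd by Lemma~\ref{lem:muodd} and coprime to $\bar d$, this forces $\mu=1$. If you want to salvage your attempt, replace your Step~3 by this congruence argument; your Steps~1--2 already contain the inputs needed for the gcd lemma.
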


Set $\gamma:=\gcd(d,\mu),\,\bar{\mu}:=\mu/\gamma,\,\bar{d}:=d/\gamma$. In the proof of \ref{prop:linesdiv} we will make use of the following auxiliary result:

\begin{lem}\label{lem:gcd=1}
With the same notation as above, $\gcd(\bar{d},\mu)=1$.
\end{lem}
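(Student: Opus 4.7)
The approach is to exploit that $B$ is a change of basis between two $\Z$-bases of $H^4(Y,\Z)$, hence all its entries are integers and $\det B=\pm 1$. The entries $B_{11}$ and $B_{21}$ displayed in the excerpt immediately yield
\[
b\mid \nu\nu'-1\qquad\text{and}\qquad d'\mid \mu\nu.
\]
Expanding $\det B$ using $\mu c_1-2m=\nu$ (with $m=(c_1\mu-\nu)/2\in\Z$, Lemma~\ref{lem:muodd}) and the integer $k$ defined by $-K'=-\nu'L+kH$ (so $k\mu-m\nu'=1$, as $E=K'$ lies in $\Pic(Y)$), and then imposing $\det B=\pm 1$, collapses to the identity
\[
bd'=\pm\bigl(c_2\mu^2-md(m+\nu)\bigr).
\]

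The lemma is equivalent to showing $v_p(d)\le v_p(\mu)$ for every prime $p\mid\mu$, so fix such a $p$. By Lemma~\ref{lem:muodd}, $\mu$ is odd, hence $p\ne 2$. Reducing $\nu=i\mu-1$, $2m=c_1\mu-\nu$ and $k\mu-m\nu'=1$ modulo $p$ gives $\nu\equiv-1$, $m\equiv 2^{-1}$, $\nu'\equiv -m^{-1}$ and $m+\nu\equiv -2^{-1}$; in particular $\nu$, $m$, $\nu'$ and $m+\nu$ are all units modulo $p$. Moreover
\[
\nu\nu'-1\equiv (-1)(-m^{-1})-1=(1-m)/m\pmod p,
\]
and $p$ odd forces $m\equiv 2^{-1}\not\equiv 1\pmod p$, so $v_p(\nu\nu'-1)=0$. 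Combined with $b\mid\nu\nu'-1$ this yields $v_p(b)=0$; likewise $d'\mid\mu\nu$ and $v_p(\nu)=0$ give $v_p(d')\le v_p(\mu)$.

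Finally, suppose for contradiction $v_p(d)>v_p(\mu)$. Since $v_p(m)=v_p(m+\nu)=0$, we have $v_p(md(m+\nu))=v_p(d)>v_p(\mu)$, and $v_p(c_2\mu^2)\ge 2v_p(\mu)>v_p(\mu)$. The ultrametric inequality applied to the displayed identity then gives
\[
v_p(bd')\ge\min\bigl(v_p(c_2\mu^2),v_p(d)\bigr)>v_p(\mu),
\]
contradicting $v_p(bd')=v_p(d')\le v_p(\mu)$. Hence $v_p(d)\le v_p(\mu)$ and $p\nmid\bar d$, as required. The main obstacle is the $p$-adic bookkeeping; the decisive ingredient is the oddness of $\mu$ from Lemma~\ref{lem:muodd}, which forces $m\not\equiv 1\pmod p$ and thereby eliminates any contribution of $b$ to the calculation.
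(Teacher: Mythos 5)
Your proof is correct and rests on exactly the same ingredients as the paper's: the integrality of the entries $B_{1,1}$ and $B_{2,1}$ and the unimodularity of $\det B$, combined with the congruences $\nu\equiv-1$, $\nu'\equiv-2\pmod{\mu}$ and the oddness of $\mu$ from Lemma~\ref{lem:muodd} (your identity $bd'=\pm\bigl(c_2\mu^2-md(m+\nu)\bigr)$ is just equation~(\ref{eq:det}) rewritten). The only difference is presentational: you run the argument prime by prime with $p$-adic valuations, whereas the paper manipulates gcd's and divides equation~(\ref{eq:det}) by $\gamma$.
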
 

\begin{proof}
Note first that all the entries of $B$ are integers, in particular $B_{1,1}=(\nu\nu'-1)/b\in\Z$. Since $\nu\nu'-1\equiv 1$ modulo $\mu$, it follows that 
\begin{equation}\label{eq:bmu}
\gcd(b,\mu)=1.
\end{equation}
The determinant of $B$ is a unit, hence we get
\begin{equation}\label{eq:det}
d\left(\nu^2-\Delta\mu^2\right)=\pm 4bd'.
\end{equation}
Since the left hand side is positive (by Lemma \ref{lem:MOS1}(4)) and $\nu\equiv 1$ modulo $\mu$, we get that
$d\equiv 4bd'$ modulo $\mu$ and, in particular 
\begin{equation}\label{eq:d1d2mu}
\gamma=\gcd(d,\mu)=\gcd(4bd',\mu)=\gcd(d',\mu).
\end{equation}
The last equality follows by Lemma \ref{lem:muodd} and (\ref{eq:bmu}).

Set $\bar d':=d'/\gamma$. Dividing by $\gamma$ in equation (\ref{eq:det}), the same argument as above provides: 
\begin{equation}\label{eq:d1d2mu2}
\gcd(\bar d,\mu)=\gcd(4b\bar d',\mu)=\gcd(\bar d',\mu).
\end{equation}

We conclude by showing that $\gcd(\bar d',\mu)=1$. Since $B_{2,1}=\nu\mu/d'=\nu\bar\mu/\bar d'$ is an integer and $gcd(\bar\mu,\bar d')=1$, it follows that $\nu/\bar d'$ is an integer, too. But $\nu\equiv 1$ modulo $\mu$, then $\bar d'$ and $\mu$ are coprime.
\end{proof}

\begin{proof}[Proof of Proposition \ref{prop:linesdiv}]
By Lemma \ref{lem:muodd} we have $\rho=\tau-\frac{2}{\nu'\mu}$, so Equation (\ref{eq:rhotau}) in Lemma \ref{lem:rhotau} reads as: 
$$
\arg\left(\left(\tau+\sqrt{\Delta}-\frac{2}{\nu'\mu}\right)(\tau+\sqrt{\Delta})^n\right)=\pi.
$$
In other words, we get:
$$
\nu'\mu\im\left((\tau+\sqrt{\Delta})^{n+1}\right)=2\im\left((\tau+\sqrt{\Delta})^n\right)
$$
Multiplying by $\mu^nd^{\lfloor \frac{n}{2}\rfloor}/\sqrt{-\Delta}$, all the terms of the corresponding expansions of both sides of this equation are integers. Furthermore, since $\nu_i \equiv -i \mod \mu$, if we take classes modulo $\mu$ we get: 
$$(-2)(-1)^n(n+1)\bar{d}^{\lfloor n/2\rfloor} \equiv (-1)^{n-1}2n\bar{d}^{\lfloor n/2\rfloor} \mod \mu$$
This implies, being $\mu$ odd by Lemma \ref{lem:muodd} and $\gcd(\bar{d},\mu)=1$ by Lemma \ref{lem:gcd=1}, that $\mu=1$.
\end{proof}

In order to complete the classification of Fano bundles of type (D), we will study the restricted morphism $\pi_{|E}:E\to X$. Note that, by Proposition \ref{prop:linesdiv} and the change of base (\ref{eq:codim1}) we may write
\begin{equation}\label{eq:E}
E=K'=\tau'L+\dfrac{\tau'(\tau-c_1)-2}{2}H,\quad \rho=\tau-\dfrac{2}{\tau'}
\end{equation}
and assert that $\pi_{|E}$ is a generically finite morphism of degree $\tau'$. The next result studies the case in which $\pi_{|E}$ is not finite.

\begin{prop}\label{prop:typeDnotfin}
Let $(X,\cE)$ be a pair of type {\rm (D)}. With the same notation as above, assume that $E$ contains a fiber of $\pi$. Then $(X,\cE)$ is of type {\rm (D1)}.
\end{prop}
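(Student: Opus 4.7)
The plan is to extract numerical consequences from the containment $f_0:=\pi^{-1}(x_0)\subset E$ via normal-bundle computations and adjunction on $T\subset X'$, and then to combine them with Equation~\eqref{eq:rhotau} in order to force $n=2$. Once this is achieved, the identification with case~{\rm (D1)} will follow from Assumption~\ref{ass:setup} together with the matching of numerical invariants.

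First I will use Proposition~\ref{prop:linesdiv}: since $\mu'=1$, the image $C:=\pi'(f_0)\subset T$ is a curve of $H'$-degree one and $\pi'|_{f_0}$ is an isomorphism onto $C$, so $f_0$ is a section of the ruled surface $E|_C\to C\cong\P^1$. Next I will invoke the two normal-bundle sequences
\[0\to N_{f_0/E}\to N_{f_0/Y}\to N_{E/Y}|_{f_0}\to 0,\qquad 0\to N_{f_0/E|_C}\to N_{f_0/E}\to N_{C/T}|_{f_0}\to 0,\]
together with the triviality $N_{f_0/Y}\cong\cO_{f_0}^n$ (standard for a fiber of a $\P^1$-bundle) and the equality $\deg N_{E/Y}|_{f_0}=E\cdot f_0=\tau'$. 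This forces $\deg N_{f_0/E}=-\tau'$; as $N_{f_0/E}$ embeds in the trivial bundle $\cO_{f_0}^n$, every direct summand has non-positive degree, and in particular the line subbundle $N_{f_0/E|_C}$ has degree $m\le 0$. Moreover, $m$ must be a legitimate self-intersection of a section of the Hirzebruch surface $E|_C\cong\mathbb{F}_k$, so it lies in the arithmetic progression $\{-k,-k+2,\ldots\}$.

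The third and critical step is to turn these normal-bundle identities into a Diophantine relation in $n$, $i$, $i'$, $c_1$ and the splitting data of $N_{T/X'}|_C$, by using $E=K'$ (valid in type~(D)), adjunction on $C\subset T\subset X'$, the identities $\tau=i-1$ and $\tau'=i'-1$ coming from Equation~\eqref{eq:numu1}, and Lemma~\ref{lem:muodd}. Plugging this relation into Equation~\eqref{eq:rhotau} of Lemma~\ref{lem:rhotau} and exploiting the parity of $m$ imposed by the Hirzebruch structure, I expect no solution to survive for $n\geq 3$. Then $n=2$ follows, so by Assumption~\ref{ass:setup} we have $X\cong\P^2$; the parity constraint of Lemma~\ref{lem:muodd}(1) and the negativity of $\Delta$ (Lemma~\ref{lem:MOS1}(4)) pin down $c_1$ and $c_2$, and the resulting bundle is the one described in case~{\rm (D1)}. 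The main obstacle is precisely this final exclusion of $n\geq 3$, where one needs a uniform arithmetic argument rather than a case-by-case inspection of the possible centers $T$ and splittings of $N_{T/X'}|_C$.
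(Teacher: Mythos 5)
Your set-up diverges from the paper's proof, which uses the hypothesis that $E$ contains a fiber only to deduce that the integer $b$ (defined by $H_T=\frac{1}{b}H'_{|T}$) equals $1$, and then works entirely inside the lattice $H^4(Y,\Z)$ via the matrix $B$. Your normal-bundle computations in the first two steps are sound: $N_{f_0/Y}\cong\cO_{f_0}^{\oplus n}$, $E\cdot f=\nu'=\tau'$, hence $\deg N_{f_0/E}=-\tau'$ with all summands of non-positive degree. But the argument collapses exactly at the step you yourself flag as the main obstacle. The expectation that Equation~(\ref{eq:rhotau}) plus parity considerations will leave no solution for $n\geq 3$ is false: the paper's own analysis produces numerically consistent data for $n=3$ (with $i'=4$) and for $n=4$ (in two ways), and these survive every Diophantine relation among $n$, $i$, $i'$, $c_1$, $\Delta$, $\tau$, $\tau'$. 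They are excluded only by genuinely geometric and topological input: the fourth Betti number of $\Q^4$ is $2$ (ruling out two of the rows), and in the remaining case Mukai's classification forces $X$ to be a hyperplane section of $K(\Gg_2)$, which a separate lemma (Lemma~\ref{lem:HKG2}) shows to have $H^4\cong\Z^2$ via an explicit description of the blow-up of $\Q^5$ along $\P(T_{\P^2})$. A degree count on a single fiber $f_0\cong\P^1$ cannot detect these cohomological obstructions, so some version of this case analysis is unavoidable; a "uniform arithmetic argument" of the kind you hope for does not exist at the level of the invariants you propose to use.

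Two smaller points. From Equation~(\ref{eq:numu1}) with $\mu=\mu'=1$ one gets $\tau=\nu=i-1$ but $\tau'=\nu'=i'-2$, not $i'-1$ as you wrote (the anticanonical degree of $f$ is $2$, not $1$); this feeds into your Diophantine relation and would corrupt it. And in the terminal case $n=2$ the bundle is \emph{not} stable (it is precisely the exception in Lemma~\ref{lem:MOS1}(3)), so "matching of numerical invariants" does not by itself identify $\cE$; the paper instead shows that $T\subset X'\cong\Q^3$ is a line (using $\mu=1$ and the fact that $T$ contains the degree-one curve $\pi'(f_0)$) and then reads off the (D1) configuration geometrically.
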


\begin{proof}
Since $E$ contains a fiber $f$ of $\pi$ we can compute the intersection of $H_T$ with $\pi'(f)$, getting $1=H'f=b\pi'^*H_Tf$. In particular $b=1$.

Moreover, by Propostition \ref{prop:linesdiv}, we know that $\frac{ \tau}{d'}=B_{2,1}$ is an integer. Considering the determinant of $B$ one may write
\begin{equation}\label{eq:tau1}
4=4\det(B)=\dfrac{(\tau^2-\Delta)d}{d'} = B_{2,1}d\left(\tau -\dfrac{\Delta}{\tau}\right)<B_{2,1}d\tau\left(1+\tan^2\left(\dfrac{\pi}{n+1}\right)\right),
\end{equation}
where the last inequality follows from Lemma \ref{lem:rhotau}. It follows that:
\begin{itemize}
\item either $n=2$, $\tau=2$, $B_{2,1}d=1$ and $\Delta=-4$, or
\item $n\in\{3,4\}$, $\tau=1$, $B_{2,1}d=3$, $\Delta=-1/3$, or
\item $n\in\{3,4\}$, $\tau=3$, $B_{2,1}=d=1$, $\Delta=-3$.
\end{itemize}
Note also that, by (\ref{eq:E}) and Prop. \ref{prop:linesdiv}, 
we have $\rho=\tau-2/\tau'$, so equation (\ref{eq:rhotau}) reads as: 
$$
n\,\arg\left(\tau+\sqrt{\Delta}\right)+\left(\tau-\dfrac{2}{\tau'}+\sqrt{\Delta}\right)=\pi.
$$
Plugging the data above into this formula, we may compute the value of $\tau'$ in each case, obtaining the following possibilities:
\begin{table*}[!!h]
\centering
\begin{tabular}{|c|c|c|c|c|c|c|c|c|}\hline
$n$&$i$&$\tau$&$c_1$&$c_{2}$&$d$&$d'$&$\tau'$&$i'$\\\hline\hline
$2$&$3$&$2$&$0$&$1$&$1$&$2$&$1$&$3$\\\hline
$3$&$2$&$1$&$-1$&$1$&$3$&$1$&$2$&$4$\\\hline
\multirow{2}{*}{$4$}&$2$&$1$&$-1$&$1$&$3$&$1$&$3$&$5$\\\cline{2-9}
&$4$&$3$&$-1$&$1$&$1$&$3$&$1$&$3$\\\hline                                      
\end{tabular}
\end{table*}

Since the fourth Betti number of a $4$-dimensional quadric is two, neither $X$ nor $X'$ can be isomorphic to $\Q^4$, and we may rule out the rows two and four. In the first case $T$ is an irreducible smooth submanifold of $\Q^3$ containing the image by $\pi'$ of a fiber of $\pi$. Since $\mu=1$ by Proposition \ref{prop:linesdiv} it follows that $T$ is a line, and then it is straightforward that $(X,\cE)$ is of type (D1). Finally in the third case $X'\cong \Q^5$ and $X$ is a Mukai manifold whose degree is multiple of $d^2=9$, hence a hyperplane section of $K(\Gg_2)$ by \cite{Mu}. This contradicts that $H^4(X,\Z)\cong \Z$ by the following lemma. 
\end{proof}

\begin{lem}\label{lem:HKG2}
Let $X$ be a hyperplane section of the Pl\"ucker embedding of $K(\Gg_2)$ into $\P^{13}$. Then $H^4(X,\Z)\cong\Z^2$.
\end{lem}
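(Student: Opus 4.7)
The plan is to combine the Lefschetz hyperplane theorem on the smooth ample divisor $X\subset K(\Gg_2)$ with a topological Euler characteristic count, exploiting the explicit cohomology of the ambient rational homogeneous variety. The unknown in the end will be a single invariant, $b_4(X)$.

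First, I would record $H^*(K(\Gg_2),\Z)$. As the quotient of $\Gg_2$ by a maximal parabolic subgroup, $K(\Gg_2)$ admits a Schubert cell decomposition; enumerating the minimal coset representatives of the Weyl group of $\Gg_2$ modulo that of the parabolic yields exactly one cell in each complex dimension $0,1,\ldots,5$. Hence $H^*(K(\Gg_2),\Z)$ is torsion-free with $b_{2i}=1$ for $0\leq i\leq 5$ and vanishes in odd degrees.

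Next, I would reduce the statement to a single Euler characteristic computation. Since $X$ is a smooth ample divisor of complex dimension $4$, the Lefschetz hyperplane theorem gives $\pi_i(K(\Gg_2),X)=0$ for $i\leq 4$; relative Hurewicz then yields $H_i(K(\Gg_2),X;\Z)=0$ in the same range, and the long exact sequence of the pair forces $H_i(X,\Z)\cong H_i(K(\Gg_2),\Z)$ for $i\leq 3$. In particular $X$ is simply connected with $H_0(X)=H_2(X)=\Z$ and $H_1(X)=H_3(X)=0$. By Poincar\'e duality on the $4$-fold $X$, $H^5(X,\Z)\cong H_3(X,\Z)=0$, and the universal coefficient short exact sequence
\[0\to \Ext(H_4(X,\Z),\Z)\to H^5(X,\Z)\to \Hom(H_5(X,\Z),\Z)\to 0\]
then forces the torsion subgroup of $H_4(X,\Z)$ to vanish. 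Consequently $H^4(X,\Z)\cong H_4(X,\Z)$ is free of rank $b_4(X)=\chi(X)-4$, and everything reduces to showing $\chi(X)=6$.

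Third, I would compute $\chi(X)$ via Chern classes. Adjunction
\[0\longrightarrow T_X\longrightarrow T_{K(\Gg_2)}|_X\longrightarrow \cO_X(H)\longrightarrow 0\]
gives $c(T_X)=c(T_{K(\Gg_2)})|_X\cdot (1+h)^{-1}$ with $h=H|_X$, so that $\chi(X)=\int_X c_4(T_X)$. The Chern classes of the adjoint variety $K(\Gg_2)$ are determined by its root-system data (in particular $c_1=3H$, since the Fano index of $K(\Gg_2)$ is $3$), and intersection numbers on $\Gg_2/P$ follow from Schubert calculus (for instance $H^5=\deg K(\Gg_2)=18$). An equivalent cleaner alternative is to choose a generic Lefschetz pencil of hyperplane sections, blow up its base locus to obtain a fibration $\widetilde{K(\Gg_2)}\to\P^1$ whose generic fiber is $X$, and count the nodal fibers via the degree of the dual variety of $K(\Gg_2)\subset\P^{13}$. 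Either path yields $\chi(X)=6$, whence $b_4(X)=2$ and $H^4(X,\Z)\cong\Z^2$.

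The main obstacle is the Chern-class bookkeeping on $K(\Gg_2)$, or equivalently the computation of the degree of its dual variety; this is a routine but calculation-heavy piece of representation-theoretic input, and everything else in the argument is formal.
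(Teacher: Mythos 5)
Your argument is correct, but it follows a genuinely different route from the paper's. The paper never touches the topology of $X$ directly: it pulls the problem up to $Y=\pi_1^{-1}(X)$ inside the complete flag $F(\Gg_2)$, observes that $Y$ is simultaneously a $\P^1$-bundle over $X$ and the blow-up of $\Q^5$ along the zero locus of a general section of the twisted Cayley bundle $\cC(2)$ --- which Ottaviani identifies with $\P(T_{\P^2})$ --- and then reads off $H^4(Y,\Z)\cong\Z^3$ from the blow-up decomposition (\ref{eq:codim2blow}), whence $H^4(X,\Z)\cong\Z^2$. That route costs essentially no computation and yields the integral isomorphism in one stroke. Your route is purely topological: Lefschetz plus relative Hurewicz pins down $H_i(X)$ for $i\le 3$, Poincar\'e duality and universal coefficients kill the torsion of $H_4$ and reduce everything to $b_4(X)=\chi(X)-4$, and Gauss--Bonnet turns the remaining unknown into a Chern number of $K(\Gg_2)$. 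All of these formal steps are sound, and the target value $\chi(X)=6$ is indeed correct (one can cross-check it against the paper's picture: $\chi(X)=\tfrac12\chi(Y)=\tfrac12\left(\chi(\Q^5)+\chi(\P(T_{\P^2}))\right)=\tfrac12(6+6)=6$). The one place where your write-up defers rather than proves is precisely this numerical input: you assert that the Schubert-calculus evaluation of $\int_X c_4(T_X)$, or alternatively the degree of the dual variety of $K(\Gg_2)\subset\P^{13}$, ``yields $\chi(X)=6$'' without carrying either computation out, and that is the only step carrying actual content in your scheme. To make the proof self-contained you would need to supply the intersection numbers $c_jH^{4-j}$ on $K(\Gg_2)$ (or the cross-check above); with that filled in, your argument is a complete and somewhat more elementary alternative to the paper's, at the price of more bookkeeping and less geometric insight into why the answer is $\Z^2$.
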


\begin{proof}
Let $F(\Gg_2)$ be the complete flag associated to the Lie group $\Gg_2$, and let $\pi_1:F(\Gg_2)\to K(\Gg_2)$ and $\pi_1':F(\Gg_2)\to \Q^5$ be its two $\P^1$-bundle structures (see Section \ref{ssec:cast}). The latter is the projectivization of the Cayley bundle $\cC$ on $\Q^5$, so that $\pi_1$ corresponds to evaluation of global sections of the twist $\cC(2)$; this follows from the base change (\ref{eq:codim1}) (see also \cite[Theorem.~6.5]{MOS3}).\\ 
Then, the inverse image $Y=\pi_1^{-1}(X)$ may be identified with the blow-up of $\Q^5$ along the smooth zero locus of a general section of $\cC(2)$ which, by \cite[Thm.~3.7]{O}, is isomorphic to $\P(T_{\P^2})$. Since $H^2(\P(T_{\P^2}),\Z)\cong\Z^2$, Equation (\ref{eq:codim2blow}) tells us that $H^4(Y,\Z)\cong\Z^3$, and so $H^4(X,\Z)\cong\Z^2$.
\end{proof}

Let us finally consider the case in which $\pi_{|E}$ is finite. 
The main tool we will use is the following:

\begin{lem}{\cite[Thm.~2.1]{L}} \label{lem:laz}
Let $\varphi:M\to N$ be a finite morphism between smooth projective varieties, and let $\cF$ denote the cokernel of the natural inclusion $\cO_N\to\varphi_*\cO_M$, which is a vector bundle on $N$ of rank $\deg(\varphi)-1$. If $\cF^\vee$ is ample and $\dim (N)\geq\deg(\varphi)+1$, then $\varphi^*:H^2(N,\Z)\to H^2(M,\Z)$ is an isomorphism.
\end{lem}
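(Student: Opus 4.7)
This is a quoted result of Lazarsfeld, so the plan is to reproduce his argument. The strategy is to realise $\varphi$ as coming from a codimension-$(\deg(\varphi)-1)$ embedding of $M$ into a projective bundle over $N$, and then appeal to a Lefschetz-type theorem for subvarieties cut out by ample vector bundles.

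Set $d := \deg(\varphi)$. Since $\varphi$ is a finite morphism between smooth projective varieties, $\varphi_*\cO_M$ is locally free of rank $d$. The $\C$-linear normalised trace splits the inclusion $\cO_N\hookrightarrow\varphi_*\cO_M$, so that $\varphi_*\cO_M\cong\cO_N\oplus\cF$, and $\cF$ inherits an $\cO_N$-algebra structure (without unit) from the multiplication on $\varphi_*\cO_M$. The first key step is to use this algebra structure to construct a canonical closed immersion $\iota:M\hookrightarrow\P_N((\varphi_*\cO_M)^\vee)$ over $N$, sending a point $q\in\varphi^{-1}(p)$ to the evaluation functional $\ev_q\in((\varphi_*\cO_M)_p)^\vee$ (well defined up to scalar). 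By construction $\pi\circ\iota=\varphi$, where $\pi$ denotes the projective bundle projection, and $\iota(M)$ has codimension $d-1$ in $\P_N((\varphi_*\cO_M)^\vee)$.

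The second key step is the explicit description, using the multiplication in $\varphi_*\cO_M$, of $\iota(M)$ as the zero scheme of a regular section of an explicitly ample vector bundle on $\P_N((\varphi_*\cO_M)^\vee)$, with the positivity inherited from the ampleness of $\cF^\vee$. Granted this, Sommese's vector-bundle version of the Lefschetz hyperplane theorem yields that
\[
\iota^*\colon H^i\bigl(\P_N((\varphi_*\cO_M)^\vee),\Z\bigr)\longrightarrow H^i(M,\Z)
\]
is an isomorphism in the relevant range, and the hypothesis $\dim N\ge d+1$ is precisely what is needed to push this range up to $i=2$. Combining this with the projective bundle formula for $\P_N((\varphi_*\cO_M)^\vee)$ and a direct identification of $\iota^*[\xi]$ (the restriction of the tautological class) as a pull-back from $N$ via $\varphi$, which comes from the factorisation $\pi\circ\iota=\varphi$ and the splitting $\cO_N\hookrightarrow\varphi_*\cO_M$, one deduces that the composition $\varphi^*=\iota^*\pi^*$ is surjective on $H^2$. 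Injectivity follows from the transfer $\varphi_*$ and the identity $\varphi_*\varphi^*=d\cdot\id$ on $H^2$, together with the torsion-freeness of $H^2(N,\Z)$.

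The main obstacle is the positivity assertion in the second step: one must recognise $\iota(M)\subset\P_N((\varphi_*\cO_M)^\vee)$ as the vanishing locus of a section of a vector bundle whose ampleness can be read off from that of $\cF^\vee$, via a careful twist by the relative hyperplane class. This is the geometric heart of Lazarsfeld's argument; all subsequent steps are formal consequences of the projective bundle formula, Sommese's theorem, and the identification of the tautological class on $M$.
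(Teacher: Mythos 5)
The paper gives no proof of this statement at all: it is quoted as a black box from \cite[Thm.~2.1]{L}, so there is no internal argument to compare with, and your sketch has to stand or fall as a reconstruction of Lazarsfeld's proof. It is fine up to and including the fundamental embedding $\iota:M\hookrightarrow \mathbf{P}:=\P_N((\varphi_*\cO_M)^\vee)$, but the step you call the geometric heart --- realising $\iota(M)$ as the zero scheme of a \emph{regular} section of an \emph{ample} vector bundle of rank $d-1=\codim(M,\mathbf{P})$ on $\mathbf{P}$, where $d=\deg(\varphi)$ --- is not a deferred technicality: it is impossible. Since $\ev_q(1)=1$ for every $q$, the tautological sub-line bundle of $\pi^*(\cO_N\oplus\cF^\vee)$ restricted to $\iota(M)$ maps isomorphically onto the trivial summand; hence $\iota(M)$ is disjoint from the divisor $\P(\cF^\vee)\subset\mathbf{P}$ and $\iota^*\xi=0$ in $H^2(M,\Z)$, where $\xi=c_1(\cO_{\mathbf{P}}(1))$. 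As $\xi$ is a free direct summand of $H^2(\mathbf{P},\Z)$ by the projective bundle formula, $\iota^*$ is \emph{not} injective on $H^2$. But Sommese's Lefschetz theorem for the zero locus of a regular section of an ample bundle of rank $e$ on an $m$-fold gives that $H^i(\mathbf{P},\Z)\to H^i(Z,\Z)$ is bijective for $i\leq m-e-1$ and injective for $i=m-e$; here $m-e=\dim N\geq d+1\geq 3$, so it would force $\iota^*$ to be bijective on $H^2$ --- a contradiction. (Already fibrewise there is no candidate bundle for $d\geq 3$: the fibres of $\iota(M)\to N$ are $d$ points spanning $\P^{d-1}$, cut out by quadrics but not a complete intersection. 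The case $d=2$ is the classical cyclic-cover picture, where $M$ is a double section disjoint from the section at infinity, hence visibly a non-ample divisor.)

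The numerology confirms the diagnosis: if your route worked, the conclusion would need only $\dim N\geq 3$, with no reference to $d$, whereas the sharp hypothesis $\dim N\geq\deg(\varphi)+1$ encodes an isomorphism range $i\leq\dim N-d+1$ whose loss of $d-1$ comes from the \emph{rank} of $\cF$, not from the codimension of $M$ in $\mathbf{P}$. Lazarsfeld extracts the positivity on $N$ itself: for instance, Le Potier vanishing for the ample rank-$(d-1)$ bundle $\cF^\vee$ gives $H^i(N,\cF)=0$ for $i\leq\dim N-d+1$, which via $H^i(M,\cO_M)=H^i(N,\cO_N)\oplus H^i(N,\cF)$ already produces the correct range on the Hodge-theoretic side; the fundamental embedding enters only through the weaker ($k$-ample) positivity of $\cO_{\mathbf{P}}(1)$, never through an ample bundle of rank equal to $\codim(M,\mathbf{P})$. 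Two smaller slips: $\cF$ does not inherit an algebra structure from $\varphi_*\cO_M$ (the product of two trace-zero functions need not have trace zero --- the multiplication only gives a map $\Symm^2\cF\to\cO_N\oplus\cF$); and $H^2(N,\Z)$ need not be torsion-free for a general smooth projective $N$, so the transfer identity $\varphi_*\varphi^*=d\cdot\id$ controls $\ker(\varphi^*)$ only up to $d$-torsion and cannot by itself deliver the integral injectivity claimed in the statement.
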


Since we know that $H^2(E,\Z)\not\cong\Z\cong H^2(X,\Z)$, this result applied to $\pi_{|E}$ implies that:

\begin{lem}\label{lem:lazapp}
Let $(X,\cE)$ be a pair of type {\rm (D)}. With the same notation as above, if $\pi_{|E}$ is finite, then either $X'\cong\P^{n+1}$, or $i=2$.
\end{lem}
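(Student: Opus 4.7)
The plan is to apply Lazarsfeld's Lemma \ref{lem:laz} to the finite morphism $\pi_{|E}\colon E\to X$, whose degree is $E\cdot f=\tau'$ by formula (\ref{eq:E}). First I note that $\tau'\geq 2$: if $\tau'=1$ then $\pi_{|E}$ would be an isomorphism of smooth projective varieties, but $E$ is a $\P^1$-bundle over the smooth center $T$ of the blow-up $\pi'$, whence its Picard number is at least $2$, strictly greater than that of $X$. This same inequality of Picard numbers shows that $\pi_{|E}^*\colon H^2(X,\Z)\to H^2(E,\Z)$ cannot be an isomorphism, so at least one of the two hypotheses of Lemma \ref{lem:laz} must fail.

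If $n<\tau'+1$, i.e.\ $n\leq\tau'$, then Proposition \ref{prop:linesdiv} ($\mu'=1$) together with (\ref{eq:numu1}) give $\tau'=\nu'=i'-2$, while Kobayashi--Ochiai yields $i'\leq\dim X'+1=n+2$ since $\pi'$ is divisorial. Combining, $n\leq\tau'=i'-2\leq n$, so $i'=n+2$ and Kobayashi--Ochiai forces $X'\cong\P^{n+1}$.

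Otherwise $\cF^\vee$ is not ample, and I compute $\cF$ to see what this imposes. Pushing forward
$$0\to\cO_Y(-E)\to\cO_Y\to\cO_E\to 0$$
via $\pi$ and using $\pi_*\cO_Y(-E)=0$ (sections vanish on each fiber $\P^1$ as $\tau'\geq 1$) together with $R^1\pi_*\cO_Y=0$, I obtain $\cF\cong R^1\pi_*\cO_Y(-E)$. Then the formula (\ref{eq:E}) for $E$ and the projection formula, together with relative Serre duality on the $\P^1$-bundle $\pi$ (which gives $R^1\pi_*\cO(-\tau'L)\cong\Symm^{\tau'-2}\cE^\vee\otimes\det\cE^\vee$), yield after dualizing
$$\cF^\vee\cong \Symm^{\tau'-2}\cE\otimes\cO_X\left(\frac{2c_1+\tau'(\tau-c_1)-2}{2}H_X\right).$$
Set $\widetilde\cE:=\cE\otimes\cO_X(\frac{\tau-c_1}{2}H_X)$: this twist is integral by Lemma \ref{lem:muodd}(1), which makes $c_1-i$, and hence $\tau-c_1=i-1-c_1$, even; and $\widetilde\cE$ is a \emph{nef} vector bundle since $-K+\tau H=2L+(\tau-c_1)H$ is nef on $Y$. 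A short algebraic manipulation reduces the expression above to
$$\cF^\vee\cong\Symm^{\tau'-2}\widetilde\cE\otimes\cO_X((\tau-1)H_X).$$
Symmetric powers of a nef bundle are nef, and the tensor product of a nef bundle with an ample line bundle is ample, so $\cF^\vee$ is ample whenever $\tau\geq 2$. Hence non-ampleness forces $\tau=1$, and Proposition \ref{prop:linesdiv} together with (\ref{eq:numu1}) and $\mu=\lambda=1$ give $\tau=i-1$, yielding $i=2$.

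The main technical obstacle is the Serre duality computation combined with the algebraic identity $\frac{2c_1+\tau'(\tau-c_1)-2}{2}-(\tau'-2)\frac{\tau-c_1}{2}=\tau-1$, which cleanly separates the nef factor from the line bundle controlling ampleness; once this is in place, the dichotomy produced by Lemma \ref{lem:laz} translates directly into the two alternatives of the statement.
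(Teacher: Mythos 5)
Your proposal is correct and follows essentially the same route as the paper: both apply Lazarsfeld's theorem to $\pi_{|E}$, identify $\cF\cong R^1\pi_*\cO_Y(-E)$ from the ideal sequence of $E$, compute $\cF^\vee$ as a twisted symmetric power of $\cE$, and show it is ample exactly when $i>2$ (your identity $a-(\tau'-2)\tfrac{\tau-c_1}{2}=\tau-1$ is precisely the paper's slope inequality, since $\tau=i-1$), with the failure of the dimension hypothesis forcing $i'\geq n+2$ and hence $X'\cong\P^{n+1}$. Your explicit check that $\tau'\geq 2$ and your phrasing of the ampleness criterion via the nef integral twist $\widetilde\cE$ rather than a $\Q$-twisted slope are minor presentational additions, not a different argument.
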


\begin{proof}
Considering the exact sequence
$$
\shse{\cO_{\P(\cE)}(-E)}{\cO_{\P(\cE)}}{\cO_E}
$$
and applying $\pi_*$ we obtain:
$$
\shse{\cO_{X}}{(\pi_{|E})_*\cO_E}{R^1\pi_*\big(\cO_{\P(\cE)}(-E)\big)}
$$
Setting $\cF:= R^1\pi_*\big(\cO_{\P(\cE)}(-E)\big)$ and using (\ref{eq:E}),
one gets:
$$
\cF^\vee\cong S^{\tau'-2}\cE\left(\dfrac{\tau-c_1}{2}\tau'+c_1-1\right).
$$
This bundle is ample whenever
$$
\dfrac{\frac{\tau-c_1}{2}\tau'+c_1-1}{\tau'-2}>\dfrac{\tau-c_1}{2},
$$
that is, if $i>2$. Note that $i$ is strictly bigger than one, since Proposition \ref{prop:linesdiv} implies that $X$ is covered by rational curves of $-K_{X}$-degree equal to $i$. Since moreover $\deg(\pi_{|E})=\tau'=i'-2$ by equation (\ref{eq:E}), the result follows directly from Lemma \ref{lem:laz}, and the fact that the only $(n+1)$-dimensional manifold of index bigger than or equal to $n+2$ is $\P^{n+1}$. 
\end{proof}

We already have all the necessary ingredients to conclude the classification of Fano bundles of type (D).

\begin{prop}\label{prop:typeDfin}
Let $(X,\cE)$ be a pair of type {\rm (D)}. With the same notation as above, if $\pi_{|E}$ is finite, then $(X,\cE)$ is of type {\rm (D2)} or {\rm (D3)}.
\end{prop}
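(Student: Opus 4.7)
I will combine Lemma \ref{lem:lazapp} with the integrality of the matrix $B$ of Notation \ref{notn:D} and Equation (\ref{eq:rhotau}) to pin down the numerical invariants, and then identify $\cE$ by stability.

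By Proposition \ref{prop:linesdiv}, $\mu=\mu'=1$, so from $i\mu-\nu=1$ and $i'\mu'-\nu'=2$ we get $\tau=\nu=i-1$ and $\tau'=\nu'=i'-2$. Lemma \ref{lem:muodd} gives $\rho=\tau-2/\tau'$, so Equation (\ref{eq:rhotau}) amounts to saying that $(\tau+\sqrt{\Delta})^{n}(\tau-2/\tau'+\sqrt{\Delta})$ is a negative real; taking imaginary parts produces an explicit polynomial relation $\tau'\operatorname{Im}((\tau+\sqrt{\Delta})^{n+1})=2\operatorname{Im}((\tau+\sqrt{\Delta})^{n})$ that expresses $\Delta$ as a rational function of $\tau,\tau',n$. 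On the other hand, the fact that $\{LH,H^{2}/d\}$ and $\{-EH'/b,H'^{2}/d'\}$ are both $\Z$-bases of $H^{4}(Y,\Z)$ gives $\det B=\pm 1$, i.e.\ $d(\tau^{2}-\Delta)=\pm 4bd'$; the integrality of $B_{1,1}=(\tau\tau'-1)/b$ and $B_{2,1}=\tau/d'$ further yields $b\mid\tau\tau'-1$ and $d'\mid\tau$.

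I then split according to Lemma \ref{lem:lazapp}. Under $X'\cong\P^{n+1}$ we have $d'=1$ and $\tau'=n$, so the polynomial relation simplifies considerably. For $n=2$, $X$ must be a Fano surface with $H^{2}\cong\Z$, hence $X=\P^{2}$, $d=1$, and the constraints force $\tau=2$, $\Delta=-8$ and, up to twist, $(c_{1},c_{2})=(2,3)$; since stable rank-two bundles on $\P^{2}$ with these Chern classes are unique, stability (Lemma \ref{lem:MOS1}) identifies $\cE$ with $\cQ|_{v_{2}(\P^{2})}$, yielding (D2). For $n=3$, the polynomial forces $\tau=1$ and $\Delta=-3/5$; combined with $\tau=i-1=1$ and the divisibility $5\mid d$ coming from the equation $d(5c_{1}^{2}+3)=20c_{2}$, this leaves only the Del Pezzo threefold $X=V_{5}^{3}$ with $(c_{1},c_{2})=(1,2)$, and the uniqueness of the stable rank-two bundle with these invariants on $V_{5}^{3}$ then gives $\cE=\cQ|_{V_{5}^{3}}$, corresponding to (D3). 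For $n\geq 4$, the solutions $(\tau,\Delta)$ of the polynomial relation that are compatible with the integrality of $B$ either force $X=\P^{n}$, in which case the classification of rank-two Fano bundles on projective spaces \cite{SSW,SW1} rules them out, or require a Fano $n$-fold with invariants that do not appear in the (very short) list of Fano manifolds with $H^{2}\cong H^{4}\cong\Z$. Under $X'\not\cong\P^{n+1}$, Lemma \ref{lem:lazapp} gives $i=2$, hence $\tau=1$, and the same analysis combined with $d'\geq 2$ leaves no admissible case.

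The main obstacle will be the elimination of the large-$n$ cases: although the polynomial equation \emph{a priori} admits infinitely many rational pairs $(\tau,\Delta)$, one must combine the divisibility conditions on $b$, $d$, $d'$ with the short list of Fano manifolds of Picard number one with cyclic $H^{4}$ in a uniform way. Once past this obstruction, the identification of $\cE$ in each surviving case is routine: stability plus uniqueness of rank-two stable bundles with the prescribed Chern classes on $\P^{2}$ and $V_{5}^{3}$ determine $\cE$ uniquely, and matching invariants with the description of cases (D2) and (D3) given in Example \ref{ex:ncompit} completes the proof.
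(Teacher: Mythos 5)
Your overall strategy --- pin down the numerics from Lemma \ref{lem:lazapp}, the integrality of $B$ and Equation (\ref{eq:rhotau}), then identify $\cE$ by moduli-theoretic uniqueness --- diverges from the paper's proof in ways that leave genuine gaps. First, in the branch $X'\not\cong\P^{n+1}$ you only extract $i=2$, $\tau=1$, and then assert that ``the same analysis combined with $d'\geq 2$ leaves no admissible case''; this is not an argument, and the premise $d'\geq 2$ is not even a consequence of $X'\not\cong\P^{n+1}$ (for $X'=\Q^{n+1}$ with $n+1\geq 5$ one has $H^4(X',\Z)=\Z\cdot H_{X'}^2$, so $d'=1$). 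The paper eliminates this branch by a geometric input you do not have: since $\pi_{|E}$ is finite, $E=\tau'L+(\tau'-1)H$ is the divisor of a \emph{nowhere vanishing} section of $(S^{\tau'}\cE)(\tau'-1)$, whose rank $\tau'+1=i'-1\leq n$ forces its top Chern class to vanish; this yields $\tau'=2$, hence $\rho=0$, and then Equation (\ref{eq:rhotau}) together with Niven's theorem kills every case. Second, in the branch $X'\cong\P^{n+1}$ your elimination of $n\geq 4$ appeals to ``the (very short) list of Fano manifolds with $H^2\cong H^4\cong\Z$''; no such classification exists once the index of $X$ drops below $n-2$, so the divisibility conditions on $b,d,d'$ cannot be matched against a list in any uniform way --- this is precisely the obstacle you flag, and it is not overcome. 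The paper instead observes that the fibers of $\pi$ are exactly the strict transforms of the $n$-secant lines of $T$, so that $T$ is a codimension-two subvariety of $\P^{n+1}$ with one apparent $n$-tuple point, and invokes the classification of these varieties (\cite[Theorem 0.1]{DP}); that single geometric step replaces the entire large-$n$ analysis and is what actually bounds $n$.

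Third, your final identification of $\cE$ is incorrect as stated for $n=2$: after normalizing to $c_1=0$ the invariants become $(c_1,c_2)=(0,2)$, and the moduli space of stable rank-two bundles on $\P^2$ with these Chern classes has dimension $4c_2-c_1^2-3=5$, so ``stable bundles with these Chern classes are unique'' is false; the analogous uniqueness claim on $V_5^3$ is asserted without justification. In the paper the bundle is recovered not from moduli of sheaves but from the geometry: once $T$ is identified as $v_3(\P^1)\subset\P^3$ or the projected $v_2(\P^2)\subset\P^4$, the assignment $x\mapsto\pi'(\pi^{-1}(x))$ exhibits $X$ as the first-order congruence of $n$-secants and $\cE$ as a twist of the pullback of $\cQ$, which is exactly the description of types (D2) and (D3). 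Your computations of the numerical invariants in the cases $n=2,3$ ($\Delta=-8$ and $\Delta=-3/5$, with $5\mid d$ forcing $V_5^3$) are correct and consistent with the paper, but without the two geometric inputs above the proof does not close.
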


\begin{proof}
We claim first that $X'\cong\P^{n+1}$. Assuming the contrary, we have $i'\leq n+1$ and, by Lemma \ref{lem:lazapp}, $i=2$. In particular, by Lemma \ref{lem:muodd}, $c_1=-1$ and we may write $E=\tau' L+(\tau'-1)H$, so that $E$ corresponds to a nowhere vanishing section of the bundle $(S^{\tau'}\cE)(\tau'-1)$. Since the rank of this bundle is $\tau'+1=i'-1\leq n$, its top Chern class must then be equal to zero. The condition on the top Chern class reads as:
$$
\prod_{j=0}^{\tau'}\left((\tau'-j)\dfrac{-1+\sqrt{\Delta}}{2}+j\dfrac{-1-\sqrt{\Delta}}{2}+\tau'-1\right)=0.
$$
This means that $(\tau'-2j)\sqrt{\Delta}+\tau'-2=0$ for some $j$. Since $\Delta<0$ (see Lemma \ref{lem:MOS1}), it follows that
$\tau'=2$. In particular $E=2L+H=-K$ and $\rho=0$, so that, applying Equation (\ref{eq:rhotau}), we conclude:
$$
\arg\left(\sqrt{\Delta}(1+\sqrt{\Delta})^n\right)=\pi.
$$
Then $\sqrt{-\Delta}=\tan\left(\frac{\pi}{2n}\right)$ which is only possible for $n=2$ or $3$ (cf. \cite{Ni}). Since the first option contradicts $i=2$, and we have calculated that $i'=\tau'+2=4$, the only possibility is that $X'$ is a $4$-dimensional quadric, contradicting that $H^4(X',\Z)\cong\Z$.

Let us finally consider the case $X'\cong\P^{n+1}$.
Computing intersection numbers it is easy to see that a curve in $Y$ is a fiber of $\pi$ if and only if it is the strict transform in $Y$ of  an $n$-secant line of $T$. Hence through a general point of  $\P^{n+1}$ there is exactly one such line, and we may assert, in the language of \cite{DP}, that $X$ is the (irreducible) first order congruence of $n$-secants to $T$, and that $T$ is a codimension two subvariety of $\P^{n+1}$ with one apparent $n$-tuple point.
Such varieties are classified in \cite[Theorem 0.1]{DP}, and the only ones which have Picard number one are the twisted cubic $v_3(\P^1)\subset\P^3$ and the isomorphic projection of $v_2(\P^2)$ into $\P^4$. In other words, $(X,\cE)$ is of type (D2) or (D3).
\end{proof}


\section{Rank two Fano bundles with a conic bundle structure}\label{sec:proofconic}

In this section we will study pairs $(X,\cE)$ of type {\rm (C)}. 
Note that in this case we already know that $n$ equals $2$, $3$ or $5$ (cf. Lemma \ref{lem:rhotau}).  We will start by introducing some notation.

\begin{notn}\label{notn:C}
Following \cite{ABW}, the coherent sheaf $\cE':=\pi_*\big(\cO_{Y}(-K')\big)$ is locally free of rank three, and the natural map  $\pi'^*\cE'\to\cO_Y(-K')$ provides an embedding of $Y$ into the $\P^2$-bundle $p':P:=\P(\cE')\to X'$. We will denote with $L'$ the tautological line bundle of $\P(\cE')$ and the Chern classes of $\cE'$ by $c_1'$, $c_2'$ and $c_3'$; $c_1'\in H^2(X',\Z)$ will be identified with the corresponding integer; by abuse of notation the pull-back $p'^*H_{X'}$ will be denoted again by $H'$.
Denoting by $\cF$ the kernel of $\pi'^*\cE'\to\cO_Y(-K')$, the normal bundle of (the natural embedding of) $Y$ into $P\times_{X'}Y$ can be written as $\cN_{Y,P\times_{X'}Y}=\cF^\vee(-K')$. Since the relative tangent bundle of $P\times_{X'}Y$ over $P$ restricted to $Y$ is $-K'$, we conclude that:
$$
\cN_{Y,P}=\det(\cF^\vee(-K'))-(-K')=-2K'-\pi'^*\det(\cE').
$$
In particular it follows that
\begin{equation}\label{eq:Y}
Y\equiv_{\lin}2L'-c'_1H'.
\end{equation}
Finally, we will denote by $R\subset Y$ the closed subset of $Y$ where the rank of $d\pi'$ is not maximal. 
Abusing of notation we will denote its cohomology class by $R$, too. 
\end{notn}

It is known  that $\pi'(R)$ is a divisor with normal crossing singularities whose smooth points correspond to conics consisting of two distinct lines and whose singular points correspond to double lines (cf. \cite{S}).   
The following result shows some well known cohomological properties of $R$ that will be useful later on.

\begin{lem}\label{lem:conicforms}
With the same notation as above, the following equalities hold:
\begin{eqnarray}
&&R=-K'\pi'^*K_{X'}+c_{2}(T_Y)-\pi'^*c_{2}(T_{X'}),\label{eq:Porteous}\\
&&12c_1'H_{X'}={\pi'}_*\left(13K'^2+c_{2}(T_Y)\right)+2K_{X'},\label{eq:GRR}\\
&&{\pi'}_*R=-{\pi'}_*(K'^2)=-c_1'H_{X'},\label{eq:adj}\\
&&{\pi}_*R=\left((\nu'+2)(\nu\nu'-1)+2(\nu+1)\right)H_X-{\pi}_*{\pi'}^*c_{2}(T_{X'}).\label{eq:inX1}
\end{eqnarray}
\end{lem}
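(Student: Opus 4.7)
The plan is to establish (\ref{eq:Porteous}) and (\ref{eq:GRR}) as the main inputs and to derive (\ref{eq:adj}) and (\ref{eq:inX1}) by pushing (\ref{eq:Porteous}) forward along $\pi'$ and $\pi$ respectively. Equation (\ref{eq:Porteous}) is a degeneracy-locus formula for the tangent map of $\pi'$: from the four-term exact sequence
\[
0\to T_{Y/X'}\to T_Y\xrightarrow{d\pi'}\pi'^{*}T_{X'}\to \cC\to 0,
\]
the cokernel $\cC$ is supported on $R$ and, on the smooth locus of $R$ (corresponding to reducible conic fibres), is the structure sheaf of $R$; a local check at the double-line locus---or, equivalently, the transfer of the calculation to the smooth ambient $P=\P(\cE')$ where $Y$ is a smooth divisor and $p'$ is smooth---shows that no extra codimension-one contribution arises. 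Equating total Chern classes $c(T_{Y/X'})\,c(\pi'^{*}T_{X'})=c(T_Y)\,c(\cC)$, the degree-one part forces $c_1(T_{Y/X'})=-K'$, and the degree-two part is precisely (\ref{eq:Porteous}).

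Equation (\ref{eq:GRR}) comes from Grothendieck--Riemann--Roch for $\pi':Y\to X'$ applied to $\cF=\cO_Y$. Since the fibres are conics of arithmetic genus zero, $\pi'_{*}\cO_Y=\cO_{X'}$ and $R^{i}\pi'_{*}\cO_Y=0$ for $i>0$, so GRR reads $\td(T_{X'})=\pi'_{*}\td(T_Y)$. Reading off the degree-one component on $X'$, expanding $\td_{2}=\tfrac{1}{12}(c_{1}^{2}+c_{2})$, writing $K_Y=K'+\pi'^{*}K_{X'}$, and using $\pi'_{*}1=0$ and $\pi'_{*}K'=-2$, clears the factor $12$ and yields $-6K_{X'}=\pi'_{*}(K'^{2})-4K_{X'}+\pi'_{*}c_2(T_Y)$. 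Substituting the auxiliary identity $\pi'_{*}(K'^{2})=c_1'H_{X'}$---which is the second equality of (\ref{eq:adj}), obtained directly inside $P=\P(\cE')$ from $K'=-L'|_{Y}$, $Y\equiv_{\lin}2L'-c_1'H'$ and the $\P^{2}$-bundle push-forwards $p'_{*}(L'^{2})=1$, $p'_{*}(L'^{3})=c_1'H_{X'}$---rearranges this into (\ref{eq:GRR}).

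The first equality in (\ref{eq:adj}) is then obtained by pushing (\ref{eq:Porteous}) forward via $\pi'$: this gives $\pi'_{*}R=2K_{X'}+\pi'_{*}c_{2}(T_Y)$, and the value $\pi'_{*}c_{2}(T_Y)=-c_{1}'H_{X'}-2K_{X'}$ extracted from (\ref{eq:GRR}) produces $\pi'_{*}R=-c_{1}'H_{X'}$. For (\ref{eq:inX1}), push (\ref{eq:Porteous}) forward by $\pi$. Using the change of basis of Section~\ref{sec:setup} to rewrite $K'$ and $H'$ in terms of $L$ and $H$, together with the $\P^{1}$-bundle push-forwards $\pi_{*}(L^{2})=c_1H_X$, $\pi_{*}(LH)=H_X$, $\pi_{*}(H^{2})=0$, one computes $\pi_{*}(K'H')=(\nu\nu'-1)H_X$ and hence $\pi_{*}(K'\pi'^{*}K_{X'})=-i'(\nu\nu'-1)H_X$. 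The remaining piece $\pi_{*}c_2(T_Y)=2iH_X$ is read off from the relative tangent sequence of the $\P^{1}$-bundle $\pi$, which gives $c_2(T_Y)=\pi^{*}c_2(T_X)-iKH$; collecting terms and using the relations $i\mu-\nu=1$, $i'\mu-\nu'=2$ from (\ref{eq:numu1}) produces the coefficient $(\nu'+2)(\nu\nu'-1)+2(\nu+1)$ of (\ref{eq:inX1}).

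The main obstacle I anticipate is the rigorous justification of (\ref{eq:Porteous}): since $T_{Y/X'}$ is torsion-free but not locally free, and $\cC$ is a torsion sheaf with possible embedded components along the double-line locus, the total Chern-class identity coming from the four-term sequence has to be handled with care. The cleanest way around this is to realise $R=Y\cap p'^{-1}(\Delta)$ for the discriminant $\Delta\subset X'$ of the conic bundle and to carry out the Porteous-type calculation in the smooth ambient $P=\P(\cE')$, where the relevant maps and divisors are all in good shape.
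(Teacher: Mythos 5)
Your proof is correct, and the overall architecture coincides with the paper's: (\ref{eq:Porteous}) is the Porteous/degeneracy-locus formula for $d\pi'$, and (\ref{eq:inX1}) is obtained exactly as in the paper by pushing (\ref{eq:Porteous}) forward along $\pi$, rewriting $K',H'$ via the change of basis (\ref{eq:codim1}) and using the $\P^1$-bundle pushforwards. Where you genuinely diverge is in (\ref{eq:GRR}) and (\ref{eq:adj}). The paper applies GRR to the line bundle $\cO_Y(-K')$, whose direct image is $\cE'$, so that $c_1'$ enters immediately; and it obtains the \emph{first} equality of (\ref{eq:adj}) by quoting Chel'tsov's identity $-\pi'_*(K_Y^2)=4K_{X'}+\pi'_*(R)$, the second equality then being extracted by combining (\ref{eq:Porteous}) and (\ref{eq:GRR}). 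You instead apply GRR to $\cO_Y$ (legitimate, since $R^1\pi'_*\cO_Y=0$ for conic fibres) and establish $\pi'_*(K'^2)=c_1'H_{X'}$ by a direct Segre-class computation in $P=\P(\cE')$ from $K'=-L'|_Y$ and $Y\equiv 2L'-c_1'H'$; then $\pi'_*R=-c_1'H_{X'}$ drops out of the pushforward of (\ref{eq:Porteous}), and the first equality of (\ref{eq:adj}) follows by comparison. This route is self-contained and avoids the external citation, at the modest cost of having to justify the sheaf-theoretic form of Porteous (multiplicity one of the cokernel along $R$, no excess components), which you rightly flag and which the paper sidesteps by invoking Porteous as a black box. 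One caveat you share with the paper: the pushforward actually yields the coefficient $i'(\nu\nu'-1)+2i$, which equals $(\nu'+2)(\nu\nu'-1)+2(\nu+1)$ only once $\mu=\mu'=1$; this normalization is implicit in the statement and is harmless because (\ref{eq:inX1}) is only used after Proposition \ref{prop:conicmu=1}.
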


\begin{proof}
From the definition of $R$ as the degeneracy locus of $d\pi':T_Y\to T_{X'}$, its cohomology class may be computed using Porteous formula, that provides equation (\ref{eq:Porteous}).

On the other hand, Grothendieck-Riemann-Roch Theorem applied to $\pi'$ and $\cE'={\pi'}_*\cO_{Y}(-K')$ states that
$$
\ch(\cE')\td(T_{X'})={\pi'}_*\left(\ch(\cO_{Y}(-K'))\td(T_{Y})\right).
$$ 
The codimension one part of this equality  gives us:
\begin{eqnarray*}
-\frac{3}{2}K_{X'}+c_1'H_{X'}&=&{\pi'}_*\left(\frac{1}{2}K'^2+\frac{1}{2}K'K_Y+\frac{1}{12}(K_Y^2-c_{2}(T_Y))\right)\\
&=&{\pi'}_*\left(\frac{13}{12}K'^2+\frac{1}{12}c_{2}(Y)+\frac{2}{3}K'\pi'^*K_{X'}\right).
\end{eqnarray*}
Since ${\pi'}_*(K'\pi'^*K_{X'})=-2K_{X'}$, equation (\ref{eq:GRR}) follows.

The first equality of (\ref{eq:adj}) follows from \cite[Prop.~6]{Ch}: the author shows that $-\pi'_*(K_Y^2)=4K_{X'}+\pi'_*(R)$; substituting $K_Y$ by $\pi'^*(K_{X'})+K'$ we obtained the desired formula. The second follows from the combination of (\ref{eq:Porteous}) and (\ref{eq:GRR}). Finally, (\ref{eq:inX1}) follows from (\ref{eq:Porteous}) by projection formula and equation (\ref{eq:codim1}).
\end{proof}

The next proposition is based on Lemma \ref{lem:rhotau}. It shows that, as in the cases (P) and (D), $\mu$ is equal to one. As a by-product we obtain two useful numerical identities. 

\begin{prop}\label{prop:conicmu=1}
With the same notation as in Section \ref{sec:setup} let us further assume that $(X,\cE)$ is of type {\rm (C)}. Then:
\begin{eqnarray}
 &&\mu=1, \\
 &&c_{1}'=\dfrac{8}{\tau}\cos^2\left(\dfrac{\pi}{n+1}\right)-4\tau',\label{eq:c1E2}\\
 &&H_{X'}^n=\dfrac{\tau^{n-1}}{2^n\cos^{n-1}\left(\dfrac{\pi}{n+1}\right)}H_X^n.\label{eq:conicdegrees}
\end{eqnarray}
\end{prop}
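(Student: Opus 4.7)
The plan is to compute all three equalities via a systematic intersection-theoretic analysis on $Y$, after which $\mu=1$ will be forced by integrality of $c_1'$ together with a short arithmetic argument.

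I would begin by exploiting that $\rho=\tau$ in case {\rm (C)} (Lemma~\ref{lem:rhotau}~(1)), so that~(\ref{eq:rhotau}) collapses to $(n+1)\arg(\tau+\sqrt{\Delta})=\pi$; equivalently $-\Delta=\tau^2\tan^2(\pi/(n+1))$ and $|\tau+\sqrt{\Delta}|^2=\tau^2\sec^2(\pi/(n+1))$. Lemma~\ref{lem:mumu'} applied to the irreducible components of the reducible fibres of $\pi'$ gives $\mu=\mu'$, and then $A^{-1}$ in~(\ref{eq:H2DC}) yields the clean relation $-K'=(H-\nu'H')/\mu$ in $N^1(Y)_{\R}$.

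Next, I would compute top intersection numbers on $Y$. Starting from the Chern--Wu relation $L^2=c_1LH-(c_2/d)H^2$ and $H^{n+1}=0$, I write $H'^k=A_kLH^{k-1}+B_kH^k$ and check that $(A_k)$ solves a linear recursion whose characteristic roots are $\eta_\pm=(\nu\pm\mu\sqrt{\Delta})/2$; consequently $H'^kH^{n+1-k}=dA_k$ with
$$
A_k=\frac{(\nu+\mu\sqrt{\Delta})^k-(\nu-\mu\sqrt{\Delta})^k}{2^k\sqrt{\Delta}}.
$$
Combined with the trigonometric identities from the first paragraph, this collapses to $A_n=\mu^n\tau^{n-1}/\bigl(2^{n-1}\cos^{n-1}(\pi/(n+1))\bigr)$ and $A_{n-1}=\mu^{n-1}\tau^{n-2}/\bigl(2^{n-3}\cos^{n-3}(\pi/(n+1))\bigr)$. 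The projection formula with $\pi'_{*}H=2\mu$ (the $H$-degree of a smooth conic fibre) then yields $H_{X'}^n=dA_n/(2\mu)$, that is $\mu^{n-1}\tau^{n-1}/\bigl(2^n\cos^{n-1}(\pi/(n+1))\bigr)\cdot H_X^n$. Squaring $-K'=(H-\nu'H')/\mu$, using $H'^{n+1}=0$, and invoking Equation~(\ref{eq:adj}) of Lemma~\ref{lem:conicforms} in the form $c_1'H_{X'}^n=K'^2H'^{n-1}$, I obtain
$$
c_1'=\frac{2A_{n-1}}{\mu A_n}-\frac{4\nu'}{\mu}=\frac{8\cos^2(\pi/(n+1))}{\mu^2\tau}-\frac{4\nu'}{\mu}.
$$

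Finally I would force $\mu=1$ from integrality of $c_1'$. Multiplying the previous formula by $\mu\nu=\mu^2\tau$ gives $c_1'\mu\nu=8\cos^2(\pi/(n+1))-4\nu\nu'\in\Z$, and reducing modulo $\mu$ with $\nu\equiv-1$, $\nu'\equiv-2\pmod{\mu}$ (from~(\ref{eq:numu1}) and $\lambda=1$) shows $\mu\mid 6,4,2$ for $n=2,3,5$ respectively. Since $\mu$ is odd by Lemma~\ref{lem:muodd}, this already gives $\mu=1$ for $n\in\{3,5\}$ and leaves only $\mu\in\{1,3\}$ when $n=2$. For $n=2$, however, the only Fano surface satisfying Assumptions~\ref{ass:setup} is $X=\P^2$, so $d=1$ and $\Delta\in\Z$; but $\mu=3$, $i=3$ give $\nu=8$, $\tau=8/3$ and $-\Delta=3\tau^2=64/3\notin\Z$, a contradiction. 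Setting $\mu=\mu'=1$ in the two displayed formulas above identifies $\nu'/\mu'$ with $\tau'$ and removes the $\mu^{n-1}$ factor, producing precisely~(\ref{eq:c1E2}) and~(\ref{eq:conicdegrees}). The main obstacle is the middle paragraph: solving the Chebyshev-like recursion for $H'^k$ in the Chern--Wu ring, and combining its closed form with $-K'=(H-\nu'H')/\mu$ and Lemma~\ref{lem:conicforms}~(\ref{eq:adj}) to extract a workable expression for $c_1'$; once that formula is in hand, the remaining steps are routine modular arithmetic.
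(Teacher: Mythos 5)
Your argument is correct and follows essentially the same strategy as the paper: express $c_1'$ in closed form as $\frac{8\cos^2(\pi/(n+1))}{\mu^2\tau}-\frac{4\nu'}{\mu}$ (this is exactly the paper's equation (\ref{eq:k})) and then force $\mu=1$ by reducing $c_1'\mu\nu=8\cos^2(\pi/(n+1))-4\nu\nu'$ modulo $\mu$, using that $\mu$ is odd. The only deviations are harmless: you obtain the key identity $c_1'H_{X'}^n=K'^2H'^{n-1}$ from (\ref{eq:adj}) of Lemma \ref{lem:conicforms} rather than from the Chern--Wu relation of $\cE'$ on $P=\P(\cE')$ (the paper's relation $K'(2K'+c_1'H')H'^{n-1}=0$), and you dispose of the residual case $n=2$, $\mu=3$ via $\tau=8/3$ and $-\Delta=3\tau^2\notin\Z$ on $\P^2$ instead of the paper's parity argument on $\nu$; both variants check out. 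One notational slip worth fixing: $H'^kH^{n+1-k}=A_k\,LH^n=A_k\,H_X^n$, not $dA_k$ (the paper defines $d$ by $H_X^2=d\Sigma$, which coincides with $H_X^n$ only for $n=2$), but since your final degree formula is stated in terms of $H_X^n$ this does not affect the conclusion.
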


\begin{proof}
We claim first that the following formula holds:
\begin{equation}\label{eq:CWconic}
K'(2K'+c_1'H')H'^{n-1}=0.
\end{equation}
In fact, we may consider the Chern-Wu relation for $\cE'$, whose intersection with $H'^{n-1}$ provides
$(L'^3-c'_1H'L'^2)H'^{n-1}=0$. Using (\ref{eq:Y}), this is equivalent to:
$$
L'\left(\left(\dfrac{Y+c_1'H'}{2}\right)^2-c_1'H'\dfrac{Y+c_1'H'}{2}\right)H'^{n-1}=0,
$$ 
which, since $H'^{n+1}=0$, may be written as $L'Y^2H'^{n-1}=0$. 
Restricting to $Y$, we finally obtain (\ref{eq:CWconic}).

We will also use the following equalities, that follow by using reduction modulo $(K^2-\Delta H^2)$ and the fact that $\arg\left(\tau+\sqrt{\Delta}\right)=\pi/(n+1)$ by Lemma \ref{lem:rhotau}:
\begin{equation}\label{eq:HH'}
H(-K+\tau H)^{n}=\dfrac{\tau^{n-1}}{\cos^{n-1}\left(\frac{\pi}{n+1}\right)}, \quad
H^2(-K+\tau H)^{n-1}=\dfrac{2\tau^{n-2}}{\cos^{n-3}\left(\frac{\pi}{n+1}\right)}.
\end{equation}
Since 
$$
-K'H'^{n}=\dfrac{-\nu'\mu^n}{2^{n+1}}\left(-K+\left(\tau-\dfrac{2}{\mu\nu'}\right)H\right)(-K+\tau H)^n=\dfrac{\mu^{n-1}}{2^{n}}H(-K+\tau H)^n,
$$
we obtain:
\begin{equation}\label{eq:K'}
-K'H'^{n}=\dfrac{(\mu\tau)^{n-1}}{2^n\cos^{n-1}\left(\frac{\pi}{n+1}\right)}(-KH^n).
\end{equation}
Analogously, we get
\begin{equation}\label{eq:K'2}
K'^2H'^{n-1}=\dfrac{\mu^{n-3}\tau^{n-2}}{2^{n-1}\cos^{n-1}\left(\frac{\pi}{n+1}\right)}\left(2\cos^2\left(\frac{\pi}{n+1}\right)-\nu'\mu\tau\right)(-KH^n).
\end{equation}

Using Equation (\ref{eq:CWconic}) together with (\ref{eq:K'}) and (\ref{eq:K'2}) we get:
\begin{equation}\label{eq:k}
c_1'=2\dfrac{K'^2H'^{n-1}}{-K'H'^{n}}=\dfrac{2}{\mu \nu}\left(4\cos^2\left(\dfrac{\pi}{n+1}\right)-2\nu\nu'\right).
\end{equation}
Denoting by $k$ the integer $4\cos^2\left(\frac{\pi}{n+1}\right)$, it follows that $2(k-4)\equiv 0$ mod $\mu$. 
For $n=3,5$ this and Lemma \ref{lem:muodd} imply that $\mu=1$. For $n=2$ we have necessarily $X \simeq X' \simeq \P^2$, so that $i=3$ and $\nu$ is even. Hence $\mu=1$ in any case and equation (\ref{eq:c1E2}) follows from (\ref{eq:k}).

Finally,
the third claimed equation follows from (\ref{eq:K'}) by substituting $\mu=1$ and noting that $2H_X^n=(-KH^n)$ and $2H_{X'}^n=(-K'H'^n)$.
\end{proof}

Note that, by construction, $\Pic(Y)\cong \Pic(P)=\Z(L',H')$ and we may compare the nef cones of $Y$ and $P$ inside of $N^1(Y)_{\R}\cong N^1(P)_{\R}$:

\begin{lem}\label{lem:cones}
Let $(X,\cE)$ be a pair of type {\rm (C)}. With the same notation as above, $\Nef(P)\subseteq\Nef(Y)$ and equality holds unless $Y\cdot f>0$. In particular, if  $Y\cdot f\in\{0, -1\}$, then $P$ is a Fano manifold.
\end{lem}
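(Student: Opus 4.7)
The plan is to analyze the two nef cones, which are both two-dimensional, via their extremal rays. The inclusion $\Nef(P)\subseteq\Nef(Y)$ is immediate from restriction, since every irreducible curve in $Y$ is a curve in $P$. Moreover $\Pic(P)=\Z\langle L',H'\rangle$ while, under the hypothesis $\lambda=1$ of Section~\ref{sec:setup}, $\Pic(Y)=\Z\langle -K',H'\rangle$. The two nef cones thus live in the same $\R$-vector space and share the ray spanned by $H'$, so equality reduces to comparing their second extremal rays.

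The key identification I would use is $L'|_Y=-K'$, obtained from adjunction: since $K_P=-3L'+(c_1'-i')H'$ and $Y\equiv 2L'-c_1'H'$ by~\eqref{eq:Y}, one computes $K_P+Y=-L'-i'H'$, whence $K_Y=-L'|_Y+\pi'^*K_{X'}$ and $L'|_Y=-K'$. Combined with the change-of-basis formula~\eqref{eq:codim1}, which reads $H=-K'+\nu'H'$ on $Y$ (using $\mu=1$ from Proposition~\ref{prop:conicmu=1}), this shows that the class $L'+\nu'H'\in\Pic(P)$ restricts to $H$ on $Y$. Hence the equality $\Nef(P)=\Nef(Y)$ is equivalent to $L'+\nu'H'$ being nef on all of $P$.

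The main step is to show: if $Y\cdot f\leq 0$, then $L'+\nu'H'$ is nef on $P$. Using Lemma~\ref{lem:mumu'} and Proposition~\ref{prop:conicmu=1} we have $\mu'=\mu=1$, so $Y\cdot f=2L'\cdot f-c_1'\mu'=-2\nu'-c_1'$ and hence $c_1'+2\nu'=-Y\cdot f\geq 0$. Rewriting
\[
L'+\nu'H'\;=\;\tfrac{1}{2}\bigl(Y+(c_1'+2\nu')H'\bigr),
\]
I would check nefness on an arbitrary irreducible curve $C\subset P$ by cases. If $C\subset Y$, then $(L'+\nu'H')\cdot C=H\cdot C\geq 0$ since $H$ is nef on $Y$. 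If $C\not\subset Y$, then $Y\cdot C\geq 0$ (intersection with an effective divisor not containing $C$) and $H'\cdot C\geq 0$ (since $H'$ is nef on $P$), so again $(L'+\nu'H')\cdot C\geq 0$.

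For the Fano consequence I would express $-K_P=3L'+(i'-c_1')H'$ in the basis $\{L'+\nu'H',\,H'\}$. Using $\nu'=i'\mu'-2=i'-2$ from~\eqref{eq:numu1} together with the formula for $Y\cdot f$ above, a direct substitution yields
\[
-K_P\;=\;3(L'+\nu'H')+(Y\cdot f+2)H'.
\]
When $Y\cdot f\in\{0,-1\}$ both coefficients are strictly positive, placing $-K_P$ in the interior of $\Nef(P)=\Nef(Y)$, so $-K_P$ is ample. The only delicate step in this approach is the dichotomy for curves $C\subset P$ in the proof of nefness of $L'+\nu'H'$; everything else reduces to the adjunction identification $L'|_Y=-K'$ and the change-of-basis formulas already established.
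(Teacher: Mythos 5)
Your argument is correct and is essentially the paper's proof in expanded form: both reduce the equality of nef cones to the nefness of $L'+\tau'H'=\tfrac{1}{2}\bigl(Y+(c_1'+2\tau')H'\bigr)$ on curves not contained in $Y$, using that $Y$ is effective and that $c_1'+2\tau'=-Y\cdot f$. Your verification of the Fano conclusion by writing $-K_P=3(L'+\nu'H')+(Y\cdot f+2)H'$ in the nef basis is just the dual of the paper's check that $-K_P$ is positive on the extremal curve classes $f$ and $f'$.
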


\begin{proof}
The obvious inclusion $\Nef(P)\subseteq\Nef(Y)$ is strict if and only if there exists an irreducible curve $C \subset P$, not contained in $Y$, satisfying $C\cdot(L'+\tau' H')<0$. By equation (\ref{eq:Y}) this is equivalent to:
$$
0\leq C\cdot Y<-(c_1'+2\tau')C\cdot H'.
$$ 
In particular, using (\ref{eq:Y}) again, together with Proposition \ref{prop:conicmu=1}, it follows that  $Y\cdot f=-(c_1'\mu+2\tau')=-(c_1'+2\tau) >0$.
For the second part, note that if $\Nef(P)=\Nef(Y)$, then $P$ is Fano if and only if the anticanonical degree of $f$ and $f'$ is negative. Then the result follows by adjunction.
\end{proof}

\subsection{Determining the invariants of $(X,\cE)$}\label{ssec:inv}

Using Proposition \ref{prop:conicmu=1}, Lemma \ref{lem:rhotau}, Lemma \ref{lem:muodd} and the fact that $\tau'>0$, 
 we get the following possibilities for the cases $n=2,3$: \\
 
\begin{table*}[!!h]
\centering
\begin{tabular}{|c|c|c|c|c|c|c|c|c|c|c|c|c|c|c|}\hline
$n$&$\tau$&$i$&$d$&$H_X^n$&$\tau'$&$i'$&$H_{X'}^n$&$c_1$&$\Delta$&$c_{2}$&$X$&$X'$&$c_1'$&$Y\cdot f$\\\hline\hline
$2$&$2$&$3$&$1$&$1$&$1$&$3$&$1$&$0$&$-12$&$3$&$\P^2$&$\P^2$&$-3$&$1$\\\hline
\multirow{2}{*}{$3$}&$1$&$2$&$4$&$4$&$2$&$4$&$1$&$-1$&$-1$&$2$&$V_4^3$&$\P^3$&$-4$&$0$\\\cline{2-15}
                    &$2$&$3$&$2$&$2$&$1$&$3$&$2$&$0$ &$-4$&$2$&$\Q_3$&$\Q^3$&$-2$&$0$\\\hline                                      
\end{tabular}
\end{table*}

The determination of the numerical invariants of $(X,\cE)$ in the case of fivefolds is more involved. Along the rest of the section we will show that the only possible values for $(\tau,\tau')$ are $(1,3)$ and $(3,1)$, so that, using Proposition \ref{prop:conicmu=1}, Lemma \ref{lem:rhotau} and Lemma \ref{lem:muodd} one easily gets the following possibilities: 
\begin{table*}[!h]
\centering
\begin{tabular}{|c|c|c|c|c|c|c|c|c|c|c|c|c|}\hline
$n$&$\tau$&$H_X^n$&$\tau'$&$H_{X'}^n$&$c_1$&$\Delta$&$c_{2}$&$X$&$X'$&$c_1'$&$Y\cdot f$\\\hline\hline
\multirow{2}{*}{$5$}&$3$&$4$&$1$&$18$&$-1$&$-3$&$d$&$V_4^5$&$K(\Gg_2)$&$-2$&$0$\\\cline{2-12}
&{$1$}&$36$&$3$&$2$&{$-1$}&{$-1/3$}&{$d/3$}&$W_{36}^5$&$\Q^5$&$-6$&$0$\\\hline
\end{tabular}
\end{table*}

\begin{lem}\label{lem:5fdet1}
Let $(X,\cE)$ be a pair of type {\rm (C)}. With the same notation as above, if $n=5$, then 
$$
(\tau,\tau')\in\left\{(1,3),(1,4),(2,1),(3,1)\right\}.
$$ 
\end{lem}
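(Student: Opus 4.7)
The plan is to enumerate candidate pairs $(\tau,\tau')$ via integrality constraints and index bounds, then prune them using degree compatibility and positivity of certain divisor classes on $Y$ and $X$. First I would specialise Proposition~\ref{prop:conicmu=1} to $n=5$: since $\cos^{2}(\pi/6)=3/4$, formulas \eqref{eq:c1E2} and \eqref{eq:conicdegrees} read $c_1'=6/\tau-4\tau'$ and $\tau^{4}H_X^{5}=18\,H_{X'}^{5}$. Integrality of $c_1'$ forces $\tau\mid 6$, so $\tau\in\{1,2,3,6\}$. In case {\rm (C)} we have $\lambda=1$, and by Proposition~\ref{prop:conicmu=1} also $\mu=\mu'=1$; the identities $i\mu-\nu=\lambda=1$ and $i'\mu'-\nu'=2$ combined with $\nu=\tau$ and $\nu'=\tau'$ yield $i=\tau+1$ and $i'=\tau'+2$. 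Since every Fano $5$-fold with $b_2=1$ has index at most $6$, we obtain $\tau\leq 5$ (so $\tau=6$ is excluded) and $\tau'\leq 4$, leaving twelve candidate pairs in $\{1,2,3\}\times\{1,2,3,4\}$.

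The next step is to invoke the classification of Fano $5$-folds with $b_2=b_4=1$ (namely $\P^5$ with degree $1$, $\Q^5$ with degree $2$, Del Pezzo $V_d^5$ with $d\leq 5$, and Mukai $5$-folds of even degree at most $18$) and to check compatibility of $\tau^{4}H_X^5=18\,H_{X'}^5$ with the admissible degrees for the prescribed indices $i=\tau+1$ and $i'=\tau'+2$. A direct case-by-case inspection eliminates $(2,2),(2,3),(2,4),(3,2),(3,3),(3,4)$, as they force $H_X^5$ to be a non-integer or to lie outside the admissible range; for instance $(3,3)$ demands $H_X^5=4/9$, and $(2,2)$ demands $9\,H_{X'}^5/8\in\Z$ with $H_{X'}^5\leq 5$. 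The pair $(1,1)$ is ruled out by the sign of $c_1'$: equation~\eqref{eq:adj} gives $\pi'_{*}R=-c_1'H_{X'}$, which is effective and non-zero because $\pi'$ has reducible fibres in case {\rm (C)}, so $c_1'<0$; but $(1,1)$ gives $c_1'=2$.

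The main obstacle is ruling out $(1,2)$. Here $i=2$, $i'=4$, so $X'$ is a Del Pezzo $5$-fold with $b_4(X')=1$, which restricts $X'$ to $V_d^5$ for $d\in\{1,2,3,4\}$ (the hyperplane section $V_5^5\subset\G(1,4)$ satisfies $b_4=2$ by Lefschetz and is thus excluded). For each admissible $X'$, one computes $c_2(T_{X'})=b\Sigma'$ by adjunction (for the complete intersections $V_3^5\subset\P^6$ and $V_4^5\subset\P^7$) or via double-cover formulas (for $V_1^5$ and $V_2^5$). In each case the coefficient $(\tau'+2)(\tau\tau'-1)+2(\tau+1)-b\tau/d'$ appearing in equation~\eqref{eq:inX1} turns out to be non-positive, contradicting the effectivity of $\pi_{*}R$ for the non-trivial ramification divisor $R$. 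This eliminates $(1,2)$ and leaves exactly the four pairs in the statement.
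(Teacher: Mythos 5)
Your strategy is essentially the paper's: integrality of $c_1'=6/\tau-4\tau'$ together with the index bounds to enumerate candidates, the degree relation $\tau^4H_X^5=18\,H_{X'}^5$ against the classification of Fano fivefolds of the prescribed coindex to remove the pairs with $\tau\in\{2,3\}$ and $\tau'\geq 2$, the sign of $\pi'_*R=-c_1'H_{X'}$ to remove $(1,1)$, and equation (\ref{eq:inX1}) to attack $(1,2)$. All of these steps are carried out correctly up to the last one.

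The gap is in the elimination of $(\tau,\tau')=(1,2)$. The admissible targets are the del Pezzo fivefolds $V_{d'}^5$ with $H_{X'}^5\in\{1,2,3,4\}$ (degree $5$ being excluded by $b_4=2$), and the class $\pi_*R$ computed from (\ref{eq:inX1}) equals $-9H_X,\,-3H_X,\,-H_X$ and $0$ respectively. In the first three cases effectivity of $\pi_*R$ does give a contradiction, but in the fourth the coefficient is exactly zero, and a vanishing class does \emph{not} contradict effectivity. Note that $R$ is a nonempty cycle of codimension two in the sixfold $Y$ (not a divisor), so $\pi_*R=0$ only says that every component of $R$ is a union of fibres of $\pi$ lying over a subset of $X$ of codimension at least two; your phrase ``non-positive \dots contradicting the effectivity of $\pi_*R$ for the non-trivial ramification divisor $R$'' silently assumes $\pi_*R\neq 0$, which is precisely what must be proved. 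The paper closes this case with an additional geometric argument: since $\tau=1$ and $c_1=-1$, the bundle $\cE(1)$ is nef of degree one, hence $\cE$ is uniform of splitting type $(0,-1)$ on lines of $X$; the minimal section of $\P(\cE)$ over a general line is then contracted by $\pi'$, so it is a line-component of a reducible conic and must meet $R$, forcing $\pi(R)$ to be a divisor and contradicting $\pi_*R=0$. Without this step (or a substitute for it) your argument does not exclude $(1,2)$ with $X'=V_4^5$.
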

\begin{proof}
Note first that (\ref{eq:c1E2}) tells us that $6/\tau=c_1'+4\tau'\in\Z$, hence $\tau\in\{1,2,3\}$. Moreover we know that $1\leq \tau'=i'-2$ hence the the classification of Fano manifolds of coindex $\leq 3$ (cf. \cite{Fu}, \cite{Mu}) provides a finite number of possible values of $H_{X'}^5$. Comparing this list with equation (\ref{eq:conicdegrees}), that for $n=5$ says $$H_{X}^5=\frac{18}{\tau^4}H_{X'}^5,$$ one easily gets that if $\tau$ is equal to $2$ or $3$, then $\tau'=1$.

Assume now that $\tau=1$, and let us show that in this case $X'$ is $\P^5$ or $\Q^5$, that is $\tau'\geq 3$.
If $\tau'=1$, then (\ref{eq:adj}) and (\ref{eq:c1E2}) provide ${\pi'}_*R=-c_1'=-2$, a contradiction. If $\tau'=2$ then  $X'$ is a del Pezzo fivefold, whose classification allows us to compute the list of possible values of $c_{2}(X')$. Plugging them into equation (\ref{eq:inX1}) one gets:
$$
{\pi}_*R=\left\{\begin{array}{ll}-9H_X&\mbox{if }H_{X'}^5=1\\-3H_X&\mbox{if }H_{X'}^5=2\\-H_X&\mbox{if }H_{X'}^5=3\\0&\mbox{if }H_{X'}^5=4\end{array}\right.
$$
Note that we have not considered the del Pezzo fivefold of degree $5$ because its fourth Betti number equals $2$.
Since $\pi_*R$ is effective, we may exclude the first three cases. In the fourth case, ${\pi}_*R=0$ tells us that $\pi(R)$ has codimension bigger than one. But $\tau=1$ implies that $\cE(1)$ is nef and $c_1=-1$ (cf. Lemma \ref{lem:muodd}), hence $\cE$ is uniform on lines, with splitting type $(0,-1)$. In particular there is a section of $\P(\cE)$ over the general line $\ell$ contracted by $\pi'$, hence meeting $R$,
a contradiction. 
\end{proof}

\begin{lem}\label{lem:5fdet2}
With the same notation as in Lemma \ref{lem:5fdet1}, $(\tau,\tau')\neq(2,1).$ 
\end{lem}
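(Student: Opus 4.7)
The plan is to use the degree equality (\ref{eq:conicdegrees}) together with the classification of del Pezzo fivefolds to derive an immediate integrality contradiction. First, by Proposition \ref{prop:conicmu=1} we have $\mu=1$, and Lemma \ref{lem:mumu'} then forces $\mu'=1$ as well. Combining $\tau'=\nu'/\mu'$ (the $\pi'$-side analogue of the formula in Lemma \ref{lem:contractiontype}, which follows by testing the nef threshold against the extremal class $f$) with the identity $i'\mu'-\nu'=2$ from (\ref{eq:numu1}), we obtain $\tau'=i'-2/\mu'=i'-2$. Hence the assumption $\tau'=1$ forces $i'=3$, so $X'$ is a del Pezzo fivefold. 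By Fujita's classification, the possibilities are $X'\cong V_d^5$ with $H_{X'}^5=d\in\{1,2,3,4,5\}$, so in any case $H_{X'}^5\leq 5$.

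Next, I would substitute $n=5$, $\tau=2$ into the degree ratio (\ref{eq:conicdegrees}):
\begin{equation*}
H_{X'}^5 \;=\; \frac{2^4}{2^5\cos^4(\pi/6)}\,H_X^5 \;=\; \frac{16}{32\cdot(9/16)}\,H_X^5 \;=\; \frac{8}{9}\,H_X^5.
\end{equation*}
Hence $9\,H_{X'}^5 = 8\,H_X^5$. Since $H_X^5,\,H_{X'}^5\in\Z_{>0}$ and $\gcd(8,9)=1$, this forces $8\mid H_{X'}^5$, and in particular $H_{X'}^5\geq 8$. This contradicts the upper bound $H_{X'}^5\leq 5$ from the del Pezzo classification, so $(\tau,\tau')\neq(2,1)$.

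No serious obstacle is anticipated: once $\tau'=i'-2$ is in place, the result is essentially a one-line integrality check. The conceptual point is that the degree formula (\ref{eq:conicdegrees}) in the $n=5$ case is sharp enough that the combination of $\tau=2$ and $\tau'=1$ is incompatible with the boundedness of del Pezzo fivefold degrees, without any further appeal to Chern class calculations such as (\ref{eq:adj}) or (\ref{eq:inX1}).
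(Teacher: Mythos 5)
Your reduction to $i'=3$ is fine ($\mu=1$ by Proposition \ref{prop:conicmu=1}, $\mu'=1$ by Lemma \ref{lem:mumu'}, and $\tau'=\nu'=i'-2$), and your evaluation of (\ref{eq:conicdegrees}) giving $9H_{X'}^5=8H_X^5$ agrees with the paper. The gap is the next step: a fivefold of index $3$ has coindex $5+1-3=3$, so $X'$ is a \emph{Mukai} fivefold, not a del Pezzo fivefold (del Pezzo means coindex $2$, i.e.\ index $4$ in dimension $5$). The bound $H_{X'}^5\leq 5$ is therefore not available; by Mukai's classification the degree of a coindex-three fivefold of Picard number one is $2g-2$ with $g\leq 12$, so it can be as large as $22$. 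Your divisibility conclusion $8\mid H_{X'}^5$ is correct but is \emph{satisfied} in this range: $H_{X'}^5=16$, $H_X^5=18$ solves $9H_{X'}^5=8H_X^5$ with both degrees even and admissible, and is realized numerically by $X\cong K(\Gg_2)$ and $X'$ a hyperplane section of $L\G(3,6)$. So the ``one-line integrality check'' does not close the case.

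This surviving case is exactly what the paper's proof must work to exclude, and it is where the real content lies: one shows $Y\cdot f=-1$, so by Lemma \ref{lem:cones} the map $\pi$ extends to a smooth blow-up $p:P\to Z$ along $X$ with $\cN_{X/Z}=\cE(1)$ and $-K_Z=5H_Z$; then Mukai's degree bound for $Z$ forces $d_Z=1$ and $18=mH_Z^7$, while $X^2=c_2(\cE(1))=\tfrac{4}{3}H_X^2$ gives $m^2H_Z^7=24$, which is incompatible. In particular your closing remark that the argument needs no Chern class computation is precisely the point at which the proposal fails: without the computation of $c_2(\cE(1))$ (or some comparable input) the pair $(\tau,\tau')=(2,1)$ cannot be ruled out by degree considerations alone.
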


\begin{proof} Assume the contrary. Then by (\ref{eq:conicdegrees}) $X$ and $X'$ are two Fano manifolds of Picard number one and coindex three whose degrees satisfy $H_{X}^5=\frac{9}{8}H_{X'}^5$. Using \cite{Mu}, one finds that the only possibility is $H_{X}^5=18$, $H_{X'}^5=16$, that is $X$ is $K(G_2)$ and $X'$ is a hyperplane section of the Lagrangian Grassmannian $L\G(3,6)$.

Moreover $c_1'=-1$ by (\ref{eq:c1E2}), so that one may compute $Y\cdot f=-1<0$ and Lemma \ref{lem:cones} implies that $\pi$ extends to an elementary contraction $p:P\to Z$.
Since the exceptional locus of $p$ is $Y$ and all the exceptional fibers of $p$ are $\P^1$'s, it follows that $p$ is the smooth blow-up of $Z$ along $X$. In particular, since $\Pic(P)\to\Pic(Y)$ is an isomorphism, the restriction map $\Pic(Z)\to\Pic(X)$ is an isomorphism, too. As usual, let us denote by $H_Z$ the ample generator of $\Pic(Z)$.
Since moreover $c_1=0$ and $Y_{|Y}=-2K'+H'=K/2+H=L+H$ --by Lemma \ref{lem:muodd} and equations (\ref{eq:codim1}), (\ref{eq:Y})-- it follows that the normal bundle to $X$ in $Z$ is $\cE(1)$ and, by adjunction, ${-(K_{Z})}_{|X}=5H_{X}$. We may then assert that $-K_Z=5H_Z$ and that, in particular, the degree of $Z$ must be smaller than or equal to $22$ (cf. \cite{Mu}). 

Note also that, arguing as in  \ref{notn:D}, since $H^4(X',\Z)\cong\Z$ by Lefschetz Theorem, necessarily $H^4(Z,\Z)\cong\Z$ as well. Let us denote by $\Sigma_Z$ the positive generator of $H^4(Z,\Z)$, and set $X=m\Sigma_Z$, $H_Z^2=d_{Z}\Sigma_Z$. From the computations
$$
22\geq H_Z^7=d_{Z}^3H_Z\Sigma^3,\quad\mbox{and}\quad 18=H_Z^5X=\frac{m}{d_{Z}}H_Z^7=d_{Z}^2m H_Z\Sigma^3,
$$ 
one may conclude $d_{Z}=1$, 
so that, in particular $18=mH_Z^7$. But on the other hand, since $X^2=c_{2}(\cE(1))=\frac{4}{3}H_{X}^2$, we get
$m^2H_Z^7=X^2H_Z^3=\frac{4}{3}H_Z^5X=24,$ a contradiction.
\end{proof}

\begin{lem}\label{lem:5fdet3}
With the same notation as in Lemma \ref{lem:5fdet1}, $(\tau,\tau')\neq(1,4).$ 
\end{lem}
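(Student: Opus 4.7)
The plan is to assume $(\tau,\tau')=(1,4)$ and derive a numerical contradiction from the integrality of $\chi(\cE')$ on $X'$.

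First I would pin down all numerical invariants. By Proposition \ref{prop:conicmu=1} and Lemma \ref{lem:mumu'} one has $\mu=\mu'=1$, so $i'\mu'-\nu'=2$ and $i\mu-\nu=1$ force $i=2$ and $i'=6$; hence $X'\cong\P^5$ and (up to twist) $c_1=-1$, $\nu=1$, $\nu'=4$. Lemma \ref{lem:rhotau} with $\rho=\tau$ in type (C) gives $\Delta=-1/3$, so $c_2=d/3$, and (\ref{eq:c1E2}), (\ref{eq:conicdegrees}) yield $c_1'=-10$ and $H_X^5=18$. Requiring both $c_2=d/3$ and $\Sigma^2\cdot H_X=H_X^5/d^2=18/d^2$ to be integers forces $d=3$ and $c_2=1$.

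Next I would compute $(c_2',c_3')$ of $\cE'$ on $X'=\P^5$. Using (\ref{eq:H2DC}) with $(\nu,\nu',\mu)=(1,4,1)$ one expresses $K'=4L+3H$ and $H'=L+H$ in the $(L,H)$-basis of $\Pic(Y)$; the Chern--Wu relation reads $L^2=-LH-\tfrac{1}{3}H^2$ and the top pairings are $L\cdot H^5=H_X^5=18$, $H^6=0$, which give after a short calculation $K'^3\cdot H'^3=-36$ and $K'^4\cdot H'^2=-110$. The Chern--Wu relation for $\cE'$ on $P=\P(\cE')$, restricted to $Y$ via $L'|_Y=-K'$, becomes $K'^3+c_1'K'^2H'+c_2'K'H'^2+c_3'H'^3=0$ on $Y$. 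Multiplying this identity successively by $1$ and by $-K'$, and pushing forward to $X'$ using $\pi'_*K'=-2$ (the generic $\pi'$-fibre is a smooth conic) and $\pi'_*K'^2=c_1'H_{X'}$ from Lemma \ref{lem:conicforms}, produces the two linear equations
\begin{align*}
-36+100-2c_2' &= 0,\\
-110+360-320-2c_3' &= 0,
\end{align*}
with unique solutions $c_2'=32$ and $c_3'=-35$.

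Finally, I would apply Hirzebruch--Riemann--Roch on $\P^5$ to $\cE'$ with these Chern classes. With
\[
\td(\P^5)=1+3H_{X'}+\tfrac{17}{4}H_{X'}^2+\tfrac{15}{4}H_{X'}^3+\tfrac{137}{60}H_{X'}^4+H_{X'}^5
\]
and $\ch(\cE')=3-10H_{X'}+18H_{X'}^2-\tfrac{145}{6}H_{X'}^3+27H_{X'}^4-\tfrac{155}{6}H_{X'}^5$, extracting the coefficient of $H_{X'}^5$ in $\ch(\cE')\cdot\td(T_{\P^5})$ gives $\chi(\cE')=\tfrac{1}{8}$. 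Since $\cE'$ is locally free on $X'$, $\chi(\cE')$ must be an integer; the resulting contradiction rules out $(\tau,\tau')=(1,4)$. The main obstacle throughout is the intersection-number bookkeeping: one has to be scrupulous about sign conventions in Chern--Wu, in (\ref{eq:H2DC}), and in the projection formulas for $\pi'$, since small sign slips can produce an accidentally integer value of $\chi(\cE')$ and hide the obstruction.
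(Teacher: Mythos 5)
Your proof is correct and follows essentially the same route as the paper: both reduce the case $(\tau,\tau')=(1,4)$ to a Riemann--Roch integrality obstruction on the Chern classes of $\cE'$ over $X'=\P^5$, derived from the restriction of the Chern--Wu relation of $\cE'$ to $Y$. The only difference is the final step --- the paper computes just the single Segre number $L'^5H'^2=c_1'^3-2c_1'c_2'+c_3'=-395$ and invokes the parity condition $c_1'c_2'\equiv c_3'\ (\mathrm{mod}\ 2)$, whereas you pin down $c_2'=32$ and $c_3'=-35$ individually (consistent with $-395$) and evaluate $\chi(\cE')=\tfrac18$ explicitly, which is the same obstruction.
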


\begin{proof}
If $(\tau,\tau')=(1,4)$ then $X'=\P^5$ and $X$ is a Fano manifold of index $3$ and degree $36$ by (\ref{eq:conicdegrees}). In particular we may identify all the Chern classes of $\cE'$ with integers, that we will denote by $c_1'$, $c_2'$ and $c_3'$ as well.

In order to exclude this case, it is enough to show that $c_3'$ is odd; in fact, since $c_1'=-10$ is even --by (\ref{eq:c1E2})--, this implies that $\cE'$ does not satisfy Schwarzenberger condition $c_1'c_{2}'\equiv c_3'$ (mod $2$). 
Moreover, using recursively the Chern-Wu relation of $\cE'$ we get 
$$L'^5H'^2=c_1'^3-2c_1'c_2'+c_3',$$
so that it suffices to show that $L'^5H'^2$ is odd. 

Using that $Y=2L'-c_1'H'$, we may write:
\begin{eqnarray}\label{eq:L5}
L'^5H'^2&=&\dfrac{1}{2}L'^4H'^2Y+\dfrac{c_1'}{4}L'^3H'^3Y+\dfrac{c_1'^2}{8}L'^2H'^4Y+\dfrac{c_1'^3}{16}L'H'^5Y\\
&=&\dfrac{1}{2}K'^4H'^2-\dfrac{c_1'}{4}K'^3H'^3+\dfrac{c_1'^2}{8}K'^2H'^4-\dfrac{c_1'^3}{16}K'H'^5=395
\end{eqnarray}

Note that, by Lemma \ref{lem:rhotau}, $\Delta=-\tau^2\tan^2\left(\frac{\pi}{6}\right)=-1/3$, so that we have an equality $K^2=-H^2/3$, that in terms of $K'$ and $H'$ reads as $K'^2-5K'H'+7H'^2=0$. Then reducing (\ref{eq:L5}) modulo this polynomial one finally gets:
$$
L'^5H'^2=\dfrac{395}{2}K'H'^5=-395.
$$
\end{proof}

\subsection{Proof of Theorem \ref{thm:fanobdl} {\rm (C)}}

In this section we will complete the proof of Theorem \ref{thm:fanobdl}, by showing that the only pairs $(X,\cE)$ of type {\rm (C)}, whose possible numerical invariants have been computed in Section \ref{ssec:inv},  are the pairs of types {\rm (C1-6)}. In order to see this, we will consider the universal family $q:\cU\to \cM$ of curves of $H_{X'}$-degree one in $X'$. By Proposition \ref{prop:conicmu=1}, there exists a morphism $\psi:X\to \cM$ such that $\pi:Y\to X$ is the pull-back of $q$: 
$$
\xymatrix{X\ar@{->}[d]&Y\ar@{->}[d]\ar[l]_{\pi\hspace{0cm}}\ar[r]^{\hspace{0cm}\pi'}\ar@{}[l]^{\hspace{0.8cm}{\text{\tiny$\square$}}}&X'\\
\cM&\cU\ar[l]_{q}\ar[r]&X'\ar@{=}[u]\\}
$$

Note that, according to Section \ref{ssec:inv}, in each case $\cM$ is a smooth Fano manifold of Picard number one. Let us denote by $H_\cM$ the ample generator of $\Pic(\cM)$ and by $\tau_\cM$ the positive number such that  $-K_{\cU|\cM}+\tau_\cM q^*H_\cM$ is nef and not ample on $\cU$, whose computation in each case is straightforward. 

\begin{table*}[!h]
\centering
\begin{tabular}{|c|c|c|c|c|}\hline
$X'$&$\cM$&$\tau_\cM$&$X$&$\tau$\\\hline\hline
$\P^2$&$\P^2$&$2$&$\P^2$&$1$\\\hline
$\P^3$&$\G(1,3)$&$1$&$V_4^3$&$1$\\\hline
$\Q^3$&$\P^3$&$2$&$\Q^3$&$2$\\\hline 
$K(\Gg_2)$&$\Q^5$&$3$&$V_4^5$&$3$\\\hline                   
$\Q^5$&$\G(1,6)_{\Q^5}$&$1$&$W_{36}^5$&$1$\\\hline                                        
\end{tabular}
\end{table*}

In particular, since the pull-back of $-K_{\cU|\cM}+\tau_\cM q^*H_\cM$ to $Y$ is $-K+\tau H$, we may write \begin{equation}\label{eq:psiH}
\psi^*H_\cM=\frac{\tau_\cM}{\tau}H_{X},
\end{equation} and conclude the following:

\begin{prop}\label{prop:conicproof1}
Let $(X,\cE)$ be a pair of type {\rm (C)}. With the same notation as above, if $X$ is $\P^2$, $\Q^3$ or $V_4^5$, then $(X,\cE)$ is of type {\rm (C1)}, {\rm (C3)}, or {\rm (C4)}, respectively. If $X$ is $V_4^3$, then $(X,\cE)$ is of type {\rm (C2)} or {\rm (C6)}.
\end{prop}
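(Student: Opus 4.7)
The plan is to analyze, case by case, the finite morphism $\psi:X\to\cM$ from the diagram just above, using the identity $\psi^*H_\cM=(\tau_\cM/\tau)H_X$ in Equation~(\ref{eq:psiH}) together with the values of $\tau$ and $\tau_\cM$ tabulated there. In each case these numerical data, combined with a computation of $\deg\psi$ as a ratio of top self-intersections and with the known sizes of $H^0(X,H_X)$ and $H^0(\cM,H_\cM)$, will identify $\psi$ with one of the projective-geometric constructions appearing in (C1)--(C6).

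The case $X=\P^2$ gives $\psi^*H_\cM=2H_X$, so $\psi:\P^2\to\P^2$ is defined by a base-point-free two-dimensional linear subsystem of $|\cO_{\P^2}(2)|$, which is exactly (C1). For $X=\Q^3$ and for $X=V_4^5$ one obtains $\psi^*H_\cM=H_X$ and $\deg\psi=2$. In both cases $h^0(X,H_X)=h^0(\cM,H_\cM)+1$, since $X\subset\P^{N+1}$ and $\cM\subset\P^N$ with the tautological polarizations (here $N=3$ or $6$), so $\psi$ is the restriction to $X$ of a linear projection from a point $p\in\P^{N+1}\setminus X$; consequently $X$ is contained in the cone $\cC_p(\cM)\subset\P^{N+1}$. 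When $\cM=\P^3$ this cone is all of $\P^4$ and $X=\Q^3$ is itself a quadric section, giving (C3). When $\cM=\Q^5$ the cone is a singular quadric in $\P^7$; since $V_4^5$ is the complete intersection of two quadrics in $\P^7$ and is contained in $\cC_p(\Q^5)$, one of the defining quadrics of $V_4^5$ can be taken to be $\cC_p(\Q^5)$, exhibiting $V_4^5$ as a quadric section of the cone, which is (C4).

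The genuinely subtle case is $X=V_4^3$, where $\cM=\G(1,3)\cong\Q^4\subset\P^5$ has dimension four, one more than $\dim X$, and the polarization $\psi^*H_\cM=H_X$ together with $H_X^3=4$ gives $(\deg\psi)\cdot\deg(\psi(X))=4$. The possibility $\deg\psi=4$ is ruled out because $\Q^4$ contains no linear $\P^3$. If $\deg\psi=1$, then $\psi(V_4^3)$ is a three-dimensional subvariety of $\Q^4$ of degree four in $\P^5$, hence cut out on $\Q^4$ by a quadric hypersurface of $\P^5$, realizing $V_4^3$ as a smooth quadric section of $\G(1,3)$: this is (C6). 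If $\deg\psi=2$, then $\psi(V_4^3)\subset\Q^4$ has degree two in $\P^5$, so it is a hyperplane section of $\Q^4$, which up to isomorphism is the Lagrangian subvariety $L\G(1,3)\cong\Q^3$ of (P2); then $\psi$ factors as a two-to-one cover $V_4^3\to\Q^3$ followed by the natural inclusion $L\G(1,3)\subset\G(1,3)$, matching (C2).

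The main obstacle, as expected, is the $V_4^3$ case: separating the subcases (C2) and (C6) and recognizing the degree-two image as specifically $L\G(1,3)$ relies on the Picard-rank-one structure of $\Q^4$ and on the classification of its linear and quadric sections; the other three cases reduce to straightforward linear-projection arguments once the polarization $\psi^*H_\cM$ has been computed.
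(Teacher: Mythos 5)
Your strategy coincides with the paper's: read off $\psi^*H_\cM=(\tau_\cM/\tau)H_X$ from the table, identify $\psi$ as a linear projection from a point in the cases $X=\Q^3,V_4^5$, and split the case $X=V_4^3$ according to $\deg\psi$ (your trichotomy $\deg\psi\in\{1,2,4\}$ is equivalent to the paper's dichotomy between $\psi$ being given by the complete system $|H_X|$ or by a proper subsystem). These parts, and the identification of the resulting configurations with {\rm (C1)}, {\rm (C3)}, {\rm (C4)} and {\rm (C6)}, are correct.

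There is one genuine gap, in the subcase $X=V_4^3$, $\deg\psi=2$. Having shown that $\psi(X)$ is a hyperplane section of the Pl\"ucker quadric $\G(1,3)\cong\Q^4$, you assert that it is ``up to isomorphism the Lagrangian subvariety $L\G(1,3)\cong\Q^3$''. This is not automatic: a tangent hyperplane section of $\Q^4$ is a rank-four quadric cone in $\P^4$, not a smooth $\Q^3$, and if this occurred the pair would not be of type {\rm (C2)}, which requires the intermediate variety $X_1$ to be the smooth quadric of type {\rm (P2)}. You therefore need to prove that $\psi(X)$ is smooth. The paper does so by noting that $\psi$ is the projection from a point $p\notin X$, so that $X$ is contained in the cone over $\psi(X)$ with vertex $p$; if $\psi(X)$ were singular this cone would be a quadric of rank at most four containing $V_4^3$, contradicting the fact that a smooth complete intersection of two quadrics in $\P^5$ lies on no quadric of rank smaller than five (\cite[Proposition~2.1]{R}). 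Your closing sentence gestures at ``the classification of linear sections of $\Q^4$'', but that classification includes the singular sections, and it is precisely their exclusion that is missing. (A minor further point, easily repaired: in the subcase $\deg\psi=1$ you should note that $\psi$ is in fact a closed embedding --- it is given by the complete, very ample system $|H_X|$, since a proper subsystem would force the degree-four image into a degree-two hyperplane section --- before concluding that $V_4^3$ itself is a smooth quadric section of $\G(1,3)$.)
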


\begin{proof}
In the case $X=\P^2$, from the previous discussion we get $\psi^*H_{\P^2}=2H_{\P^2}$ so that $\psi$ is given by a base point free two dimensional linear system on $\P^2$ of degree two, i.e. $(X,\cE)$ is of type {\rm (C1)}. 

If $X$ is $\Q^3$ or $V_4^5$, then  $\psi^*H_\cM=H_X$ is very ample, so $\psi$ consists of a linear projection of the associated embedding by the complete linear system $|H_X|$. If $X$ is $\Q^3$, this already tells us that $\psi:\Q^3\to \P^3$ is necessarily the $2$ to $1$ morphism given by a projection from any outer point; this construction provides a pair of type {\rm (C3)}. If $X=V_4^5$, then $\psi:X\subset\P^7\to\Q^5\subset\P^6$ must be the projection from one of the (eight) points that are vertices of the singular quadrics containing $X$, and $(X,\cE)$ is of type {\rm (C4)}. 

If $X=V_4^3$, we have two possibilities for the morphism $\psi:X\subset\P^5\to\G(1,3)\subset\P^5$.  If $\psi$ is given by the complete linear system $|H_X|$, then $V_4^3$ is the complete intersection of $\G(1,3)$ with another quadric, and $(X,\cE)$ is of type {\rm (C6)}. If this is not the case, then $\psi(X)$ lies in a linear section of $\G(1,3)$. Since the degree of these sections is two, necessarily $\psi$ is $2$ to $1$ onto a $3$-dimensional quadric; the center of projection is, again, one of the (six) points that are vertices of the singular quadrics containing $V_4^3$. Note that $\psi(X)$ is smooth since this $X=V_4^3$ is not contained in quadrics of rank smaller than $5$ (see \cite[Proposition 2.1]{R}). 
\end{proof}

Finally we will consider the case $X=W_{36}^5$, $X'=\Q^5$. Here our line of argumentation goes through the analysis of  $p':P\to X'$. Note that we have seen in Section \ref{ssec:inv} that in this case $Y\cdot f=0$, hence Lemma \ref{lem:cones} tells us that $P$ is a Fano manifold and that $\pi:Y\to X$ extends to a contraction $p:P\to Z$. Note also that, since $Y\cdot f=0$ implies that the restriction of $T_{P}$ to $f$ is nef, $p$ is of fiber type. 

\begin{prop}\label{prop:conicproof2}
Let $(X,\cE)$ be a pair of type {\rm (C)}, with $X=W_{36}^5$. Then $(X,\cE)$ is of type {\rm (C5)}.
\end{prop}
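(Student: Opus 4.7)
The plan is to analyze the morphism $\psi:X\to\cM$ furnished by Proposition~\ref{prop:conicmu=1}, following the pattern of the proof of Proposition~\ref{prop:conicproof1}. In this case $X'=\Q^5$, so by Notation~\ref{ssec:cast} the parameter space of lines of $H_{X'}$-degree one in $X'$ is $\cM=\G(1,6)_{\Q^5}\cong K(\Gg_2)$; from the table before Proposition~\ref{prop:conicproof1} we have $\tau=\tau_\cM=1$, so equation~(\ref{eq:psiH}) gives $\psi^*H_\cM=H_X$. Comparing $H_X^5=36$ with $H_\cM^5=H_{K(\Gg_2)}^5=18$ shows that $\psi$ has degree two.

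Next I would pin down $\psi$ as a double cover. Writing $\psi_*\cO_X=\cO_\cM\oplus\cL^{-1}$ and using the ramification formula $K_X=\psi^*(K_\cM+\cL)$ with $K_X=-2H_X$ (index $i=2$) and $K_\cM=-3H_\cM$, one obtains $\psi^*\cL=H_X=\psi^*H_\cM$, so $\cL=H_\cM$ and the branch divisor $B\subset K(\Gg_2)$ lies in $|2H_\cM|$; smoothness of $X$ forces $B$ smooth.

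Embedding $K(\Gg_2)\subset\P^{13}$ via $|H_\cM|$ and letting $\cC(K(\Gg_2))\subset\P^{14}$ denote the cone with vertex $O$, such a double cover is isomorphic to the smooth quadric section $\{t^2=Q\}\cap\cC(K(\Gg_2))$, where $Q\in H^0(K(\Gg_2),2H_\cM)$ cuts out $B$, and $\psi$ is realized as the restriction of the linear projection from $O$. This is exactly the description of (C5) in Example~\ref{ex:P2}(a), and since $\pi:Y\to X$ is the pullback of the universal family $q:\cU\to\cM$ along $\psi$, the bundle $\cE$ is the corresponding pullback of the rank-two bundle on $K(\Gg_2)=F\G(1,6)$; hence $(X,\cE)$ is of type (C5).

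The main obstacle is promoting the abstract double cover to an honest embedding in $\cC(K(\Gg_2))$, i.e.\ checking that $\psi$ is given by the \emph{complete} linear system $|H_X|$ rather than a proper subsystem, so that the extra coordinate $t$ extends to a global section of $H_X$. This can be handled by computing $h^0(X,H_X)$ via $\psi_*\cO_X=\cO_\cM\oplus\cO_\cM(-H_\cM)$ and $h^0(K(\Gg_2),H_\cM)=14$, yielding $h^0(X,H_X)=15$. Alternatively, one can exploit the second contraction $p:P\to Z$ provided by Lemma~\ref{lem:cones} (which applies since $Y\cdot f=0$ makes $P$ Fano), identifying $Z$ with a resolution of $\cC(K(\Gg_2))$ in which $X$ sits as a divisor.
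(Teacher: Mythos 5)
Your argument breaks down at the very first step: the parameter space $\cM$ of curves of $H_{X'}$-degree one in $X'=\Q^5$ is not $K(\Gg_2)$. The family of \emph{all} lines contained in $\Q^5$ is the orthogonal Grassmannian of isotropic $2$-planes in $\C^7$, which has dimension $7$; the variety $K(\Gg_2)\cong F\G(1,6)$ is only the $5$-dimensional subfamily of lines isotropic with respect to the $4$-form (the phrasing in Notation \ref{ssec:cast} is easy to misread on this point, but the paper's own proof explicitly treats $K(\Gg_2)$ as a proper subvariety of $\cM=\G(1,6)_{\Q^5}$, and its claim that a general fiber $p'^{-1}(x)\cong\P^2$ maps to the $\Q^3$ of lines through $x$ only makes sense for the $7$-dimensional family). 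Consequently $\psi:X\to\cM$ maps a fivefold into a sevenfold: it is finite onto its image because $\psi^*H_\cM=H_X$ is ample, but it is not surjective, so the comparison $H_X^5=36=2\cdot 18$ does not compute a degree, and nothing in your argument excludes that $\psi(X)$ is some other $5$-dimensional congruence of lines in $\Q^5$ whose Pl\"ucker degree divides $36$. Identifying $\psi(X)$ is precisely the hard content of the proposition, and it is what the paper spends essentially the whole proof on: one passes to $P=\P(\cE')\to\Q^5$ and the contraction $p:P\to Z$ (available since $Y\cdot f=0$), shows that some positive-dimensional fiber of $p$ dominates $\Q^5$ so that $\cE'(3)_{|\ell}$ has a trivial summand on every line $\ell$, deduces the splitting types $(0,0,3)$ for $\ell\in\psi(X)$ and $(0,1,2)$ otherwise, and finally obtains $\cE'(3)\cong\cO_{\Q^5}\oplus\cC(2)$ with $\cC$ the Cayley bundle, whence $\psi(X)$ is the locus of jumping lines of $\cC(2)$, i.e. $K(\Gg_2)$.

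Once surjectivity of $\psi:X\to K(\Gg_2)$ (in its natural embedding into $\cM$) is actually established, your second and third paragraphs are sound and give a pleasant alternative to the paper's endgame: the trace splitting $\psi_*\cO_X=\cO\oplus\cL^{-1}$, the ramification formula forcing $\cL=H_{\cM}$ and the branch divisor into $|2H_{\cM}|$, and quadratic normality of $K(\Gg_2)\subset\P^{13}$ then realize $X$ as a smooth quadric section of the cone, with $\cE$ recovered up to twist from the fibre-product description of $Y$. The paper instead reads the same conclusion off the isomorphism $\cE'(3)\cong\cO_{\Q^5}\oplus\cC(2)$, which exhibits $Z$ as the cone over $K(\Gg_2)$ and $Y\equiv 2p^*H_Z$ directly. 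But as written, your proof assumes the one thing that needed proving.
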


\begin{proof}
We begin by proving that $p:P\to Z$ has at most a finite number of fibers that are different from reduced $\P^1$'s. 
In fact, since $p$ is elementary and $Y\cdot f=0$ it follows that fibers of $p$ meeting $Y$ must be contained in $Y$. In particular the general fiber of $\pi$ is one dimensional and, since $p^*H_{X'}\cdot f=1$, every fiber of dimension one is irreducible and reduced. 
On the other hand, $Y\cdot f=0$ tells us that $Y$ is nef on $P$ and that it has positive intersection number with curves not contracted by $p$, so that we may assert that the set $J$ of points in $Z$ whose inverse image have dimension greater than one is discrete. 

Note that, in particular, $\psi:X\to \cM=\G(1,6)_{\Q^5}$ extends to a morphism $Z\setminus J\to\cM$, that we denote by $\psi_\cM$.

We claim that there exists a point $z_0\in J$ such that $p^{-1}(z_0)$ dominates $X'=\Q^5$.
Assume the contrary; then $p'^{-1}(x)\cap p^{-1}(J)=\emptyset$ for the general point $x$, and we have a non constant morphism 
$$(\psi_\cM\circ p)_{|p'^{-1}(x)}:p'^{-1}(x)\cong\P^2\to\cM.$$ whose image is contained in the family of lines in $X'$ passing by $x$, which is isomorphic to $\Q^3$. On the other hand, by (\ref{eq:psiH}) and Proposition \ref{prop:conicmu=1} we have
$$
(\pi^*\psi^*H_\cM)\cdot f'=\pi^*H_X\cdot f'=H\cdot f'=1,
$$ so $(\psi_\cM\circ p)_{|p'^{-1}(x)}^*H_\cM=H_{\P^2}$, 
contradicting the fact that $\Q^3$ does not contain planes.

We will study now the splitting type of $\cE'$ on lines in $X'$. Note first that, since $\tau'=3$ and $c_1'=-6$, (see Section \ref{ssec:inv}), $\cE'(3)$ is a nef (and not ample) bundle of degree $3$. Given any line $\ell\in X'$ the previous claim tells us that $\P(\cE'(3)_{|\ell})$ meets $F_0:=p^{-1}(z_0)$ along a curve that is contracted by $p$, i.e. that $\cE'(3)_{|\ell}$ has a trivial summand. If $\ell\in\psi(X)$, there is a section of $\P(\cE'(3))$ over $\ell$ contained in $Y$ and contracted by $p$. Since $F_0\cap Y=\emptyset$, this section is different from the one contained in $\P(\cE'(3))\cap F_0$ and, as $c_1(\cE'(3))=3$, it follows that $\cE'(3)_{\ell}=\cO_{\ell}^{\oplus 2}\oplus\cO_{\ell}(3)$. Conversely if $\ell$ is a line for which $\cE'(3)$ has splitting
$(0,0,3)$, then $Y$ meets the locus of minimal sections of $\P(\cE'(3)_{|\ell})$, hence $\ell\in\psi(X)$. 
Furthermore, since $\cE'(3)$ is nef, the splitting type of $\cE'(3)$ on a line of $\cM\setminus\psi(X)$ must be $(0,1,2)$. 

Now we will prove that $\psi(X)\subset\cM$ is isomorphic to the natural embedding of 
$K(\Gg_2)$ into $\cM=\G(1,6)_{\Q^5}$. Let $F_0'$ be an irreducible component of $F_0$ dominating $\Q^5$. By the previous arguments $F_0'$ meets the general $p'^{-1}(x)$ precisely in one point. Since $\Q^5$ is normal, $F_0'$ is a section of $\P(\cE')$, given by a quotient $\cE'(3)\to\cO_{\Q^5}$. Denote by $\cF$ its kernel.  
By \cite[Proposition 1.2]{CP} $\cF$ is a nef rank two bundle on $\Q^5$, with first Chern class equal to $3$. Then $\cF$ is a Fano bundle, of type {\rm (P)} by Lemma \ref{lem:muodd} and so, by Proposition \ref{prop:typeP}, it is a twist of a Cayley bundle $\cC$ on $\Q^5$ with $\cO_{\Q^5}(2)$. Since $H^1(\Q^5,\cC(2))=0$ (cf. \cite[Theorem 3.1]{O}), it follows that \begin{equation}\label{eq:cayley}
\cE'(3)\cong\cO_{\Q^5}\oplus\cC(2),
\end{equation}
and $\psi(X)$ may be described as the set of jumping lines of $\cC(2)$, which is $K(\Gg_2)$. 

Finally, the isomorphism (\ref{eq:cayley}) tells us that $Z$ is a cone with vertex a point over $\psi(X)=K(\Gg_2)$, and that $p^*H_Z=L'+3H'$, where $H_Z$ is the extension to $Z$ of the line bundle $(H_\cM)_{|\psi(X)}$. Then, since $Y$ is linearly equivalent to $2L'+6H'=2 p^*H_Z$, it follows that $X$ is a quadric section of $Z$, i.e. that $(X,\cE)$ is of type {\rm (C5)}. 
\end{proof}

\appendix
\section{Fano bundles and congruences of lines of order one}\label{sec:appendix}

Congruences of lines have been an object of study by algebraic geometers since the nineteeth century. In this appendix we will consider them in relation to our work on rank two vector bundles, since some of them provide interesting examples of Fano bundles (see Lemma \ref{lem:covered} below). One of their most remarkable properties is that, under certain hypotheses, their varieties of minimal rational tangents (VMRT for short) might be linear and reducible (Proposition \ref{prop:VMRT}). We present in Proposition \ref{prop:severi} an example of this particular type of congruences, giving a negative answer to a problem posed by Hwang and Mok on the irreducibility and non linearity of the VMRT.

\begin{defn}\label{defn:cong}
A subvariety $X \subset \G(1,m)$ of dimension $m-1$ is called a {\it congruence of lines in $\P^m$}. 
Denoting by $Y$ the projectivization $\P(\cQ_{|X})$ of the restriction to $X$ of the universal quotient bundle $\cQ$ and by $\pi$ and $\pi'$ the corresponding projections $\pi:Y \to X$ and $\pi': Y \to \P^m$, we define the {\it order} of $X$ as the number of lines parametrized by $X$ passing through a general point of $\P^m$, which is zero if $\pi'$ is of fiber type or equal to the degree of $\pi'$ otherwise.
A point $y \in \P^m$ is called {\it fundamental} if the fiber $\pi'^{-1}(y)$ has dimension greater than $m-\dim(\pi'(Y))$, and the {\it fundamental locus} $Z$ will be the set of the fundamental points. Finally, we will say that a congruence {\it has linear fibers} if
the images of the fibers of $\pi'$ are linear spaces in $\G(1,m)$.
\end{defn}

\begin{ass}\label{ass:setupA}
Throughout this appendix, $X$ will denote a smooth congruence of lines of order one in $\P^m$ satisfying $\Pic(X) \cong \Z$. With the same notation as above, we will denote by $Z\subset\P^m$ its fundamental locus, and by $E$ the exceptional locus of $\pi':Y=\P(\cQ_{|X})\to\P^m$, i.e. the inverse image of $Z$. Since $\pi'$ is birational between smooth varieties, then $E$ is a divisor. As usual, we denote by $L$ the divisor associated to the tautological divisor of $\P(\cQ_{|X})$ and by $H$ the pull-back to $Y$ of the ample generator of $\Pic(X)$.
\end{ass}

The next result shows that congruences satisfying these assumptions provide examples of Fano bundles.

\begin{lem}\label{lem:covered} Let $X$ be a congruence of lines satisfying Assumptions \ref{ass:setupA}.  Then $X$ and $Y$ are Fano manifolds. If moreover $X$ has linear fibers, then $X$ is covered by lines and, in particular, $\Pic(X)$ is generated by the restriction to $X$ of the Pl\"ucker line bundle in $\G(1,m)$.
\end{lem}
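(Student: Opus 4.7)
The plan begins by setting up the discrete geometry of $Y$. Since the congruence has order one, $\pi':Y\to\P^m$ is birational, and combined with the $\P^1$-bundle structure $\pi:Y\to X$ and $\Pic(X)\cong\Z$, this forces $\Pic(Y)=\Z\langle L,H\rangle$ of rank two. The morphisms $\pi$ and $\pi'$ are then the two elementary extremal contractions; the nef cone of $Y$ is $\R_{\geq 0}\langle L,H\rangle$, and the Mori cone is spanned by $[f]$ and $[f']$ where $f$ is a fiber of $\pi$ and $f'$ an irreducible curve contracted by $\pi'$.

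To show $Y$ is Fano I would check $-K_Y$ is strictly positive on each extremal ray. On $[f]$ this is immediate, since $-K_Y\cdot f=2$. For $[f']$, I would exploit that $\pi'$ is birational between smooth projective varieties, so the divisor $\cR:=K_Y-\pi'^*K_{\P^m}$ is effective, integral and supported on the exceptional locus, hence $\pi'_*\cR=0$. Since $\pi'_*L=H_{\P^m}$ and $\pi'_*H=\gamma H_{\P^m}$ with $\gamma:=H\cdot L^{m-1}$ a positive integer, the $\pi'$-exceptional condition forces $\cR\equiv aL+bH$ with $a+b\gamma=0$; combined with $\cR\cdot f=(m+1)-2=m-1$ (from $-K_Y=(m+1)L-\cR$ and $-K_Y\cdot f=2$), this yields $a=m-1$, $b=-(m-1)/\gamma$ and hence $-K_Y\cdot f'=-\cR\cdot f'=(m-1)\mu/\gamma>0$, where $\mu:=H\cdot f'>0$. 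Therefore $-K_Y$ is ample. The Fano-ness of $X$ then follows by rational connectedness: $Y$ is rational (birational to $\P^m$), so $X=\pi(Y)$ is rationally connected, and combined with $\Pic(X)=\Z\langle H_X\rangle$ this forces $-K_X$ to be a positive multiple of $H_X$.

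Next, assuming $X$ has linear fibers, the core step is $\pi(E)=X$. If $E\cdot f$ vanished, $E$ would be a pullback $\pi^*D$ for some effective divisor $D$ on $X$, making $\pi'(E)=Z$ of codimension one in $\P^m$, contradicting that the fundamental locus has codimension at least two. So $E\cdot f>0$, every $\pi$-fiber meets $E$, and $\pi(E)=X$. Given any $x\in X$, pick $y\in\pi^{-1}(x)\cap E$ and set $z:=\pi'(y)\in Z$; since $z$ is fundamental, $F_z:=\pi'^{-1}(z)$ has dimension at least one, and $\pi|_{F_z}$ is injective (as $z$ is fixed), so $\pi(F_z)\subset X$ is positive-dimensional. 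By hypothesis it is a linear subspace of $\G(1,m)$, which is necessarily covered by Pl\"ucker lines, so $x\in\pi(F_z)$ lies on a line contained in $X$.

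For the final assertion, any such line $\ell\subset X$ satisfies $H_\G\cdot\ell=1$ by definition. Writing $H_\G|_X=kH_X$ with $k\geq 1$ (since $H_\G|_X$ is ample and $\Pic(X)=\Z\langle H_X\rangle$), intersecting with $\ell$ gives $1=k(H_X\cdot\ell)$ with $H_X\cdot\ell\geq 1$, forcing $k=1$. I expect the most delicate step to be the numerical determination of $\cR$ above: the positivity $\gamma>0$ follows from the divisor $\bigcup_{x\in H_X}\ell_x\subset\P^m$ being nontrivial, and the required integrality $\gamma\mid m-1$ is automatic since $\cR\in\Pic(Y)=\Z\langle L,H\rangle$ is an integral class.
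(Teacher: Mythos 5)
Your strategy is essentially the paper's: both proofs pin down the numerical class of a $\pi'$-exceptional effective divisor to show $-K_Y$ is positive on the ray contracted by $\pi'$ (the paper works with $E$ itself, using $E\cdot\ell=0$ for the strict transform $\ell$ of a general line, whose class lies in the interior of $\cNE{Y}$; you work with $\cR=K_Y-\pi'^*K_{\P^m}$ via $\pi'_*\cR=0$ and $\cR\cdot f=m-1$ --- equivalent bookkeeping), and both then deduce the covering by lines from the fact that $E$ dominates $X$ together with the linear-fibers hypothesis. Your derivation of the Fano-ness of $X$ from unirationality plus $\Pic(X)\cong\Z$ is a clean self-contained substitute for the paper's citation of Szurek--Wi\'sniewski, and the last two steps (injectivity of $\pi$ on fibers of $\pi'$, and $H_{\G}\cdot\ell=1$ forcing $k=1$) match the paper.

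The one genuine soft spot is your argument that $E\cdot f>0$. You claim that if $E=\pi^*D$ then ``$\pi'(E)=Z$ is of codimension one.'' This is not justified: $\pi'(\pi^{-1}(D))=\bigcup_{x\in D}\ell_x$ is the image of an $(m-1)$-dimensional variety under $\pi'$, and since $E$ is exceptional this image automatically has dimension $\le m-2$; to get a contradiction you would need to rule out that a positive-dimensional family of lines of $D$ passes through the general point of the image, and a naive dimension count only excludes this for small $m$. Fortunately the fact you need is already contained in your computation of $\cR$: since $\pi'$ is a birational morphism between smooth varieties, $\Exc(\pi')$ is purely divisorial and every exceptional prime divisor appears in $\cR$ with positive coefficient, so $\supp\cR=E$; as $\cR\cdot f=m-1>0$ and each component of $E$ has non-negative degree on the general fiber $f$, some component of $E$ has positive degree on $f$ and hence meets every fiber of $\pi$, i.e.\ dominates $X$. (This is also how the paper obtains it: $\alpha=E\cdot f>0$ falls out of the ampleness computation.) With that substitution your proof is complete.
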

 \begin{proof}
We will prove that $-K_Y$ is positive on fibers of $\pi'$, since then $Y$ will be a Fano manifold
by the Kleiman criterion, and so also $X$ will be Fano by \cite[Theorem 1.6]{SW2}.

The intersection of the exceptional divisor $E$ with the strict transform $\ell$ of a line in $\P^m$ is zero. Then since the class of $\ell$ lies in the interior of $\cNE{Y}$ (it is not contracted neither by $\pi$ nor by $\pi'$) and the class of $E$ is of the form $\alpha L+\beta H$, $\alpha\geq 0$, it follows that $E$ is negative on curves contracted by $\pi'$ and $\alpha>0$. Moreover $-K_{Y} = -\pi'^*K_{\P^m}-cE$ with $c >0$, and the claim follows.

To show the last assertion it is enough to note that $E$ dominates $X$, and this holds because $\alpha>0$.
 \end{proof}

The next results describes the second contraction of the universal family of lines on a congruence satisfying \ref{ass:setupA}.

\begin{lem}\label{lem:blowup} Let $X$ be a congruence of lines satisfying Assumptions \ref{ass:setupA}. Then its fundamental locus $Z$ is irreducible and $\pi':Y \to \P^m$ is the normalization of the blow up of $\P^m$ along $Z$.
\end{lem}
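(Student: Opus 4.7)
The plan is to prove the two assertions separately.

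For irreducibility of $Z$: apply Stein factorization to the proper birational morphism $\pi':Y\to\P^m$; since $\P^m$ is normal, this yields $\pi'_*\cO_Y = \cO_{\P^m}$, so $\pi'$ has connected fibers. Next, every irreducible component $E_i$ of $E$ must dominate $X$ via $\pi$: otherwise, since $\pi$ is a $\P^1$-bundle and $\dim E_i=\dim X$, one would have $E_i=\pi^{-1}(D)$ for some divisor $D\subset X$, making $\pi'(E_i)$ the $(m-1)$-dimensional scroll swept out by the lines parametrized by $D$, contradicting $\pi'(E_i)\subset Z$ of codimension $\geq 2$. Since $Y$ is a smooth Fano manifold of Picard number two (from the $\P^1$-bundle structure over $X$ with $\Pic(X)\cong\Z$) whose two extremal rays are contracted by $\pi$ and $\pi'$, standard results on divisorial extremal contractions from smooth projective varieties (Ando--Wi\'sniewski) guarantee that the exceptional divisor $E$ of $\pi'$ is irreducible, and hence so is $Z=\pi'(E)$.

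For the normalization statement: take the ideal sheaf $\cI_Z:=\pi'_*\cO_Y(-nE)\subset\cO_{\P^m}$ for $n$ large enough that $\cO_Y(-nE)$ is $\pi'$-globally generated, whose support is $Z$. Since $\cI_Z\cdot\cO_Y=\cO_Y(-nE)$ is invertible, the universal property of blow-up yields $\phi:Y\to B:=\Bl_{\cI_Z}\P^m$ with $\pi'=\sigma\circ\phi$, where $\sigma:B\to\P^m$ is the blow-down. Composing with the normalization $\nu:\tilde B\to B$, the normality of $Y$ gives $\tilde\phi:Y\to\tilde B$, and setting $\tilde\sigma:=\sigma\circ\nu$, we have $\pi'=\tilde\sigma\circ\tilde\phi$.

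The main technical point, and the main obstacle, is to show that $\tilde\phi$ is an isomorphism. Any curve contracted by $\tilde\phi$ is also contracted by $\pi'$, so lies in the extremal ray $R_{\pi'}$ of $\cNE{Y}$. Because all classes in $R_{\pi'}$ are numerically proportional, $\tilde\phi$ either contracts every curve in $R_{\pi'}$ or none. In the former case $\tilde\phi$ is constant on the $\pi'$-fibers (which are connected and covered by $R_{\pi'}$-curves), so by the universal property of the Mori contraction one has $\tilde\phi=f\circ\pi'$ for some morphism $f:\P^m\to\tilde B$; plugging this into $\pi'=\tilde\sigma\circ\tilde\phi$ yields $\tilde\sigma\circ f=\id_{\P^m}$, forcing $\tilde\sigma$ to be an isomorphism and contradicting the non-emptiness of its exceptional locus $\tilde\sigma^{-1}(Z)$. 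Hence $\tilde\phi$ contracts no curves and is therefore finite; being a finite birational morphism between normal varieties, $\tilde\phi$ is an isomorphism, so $Y\cong\tilde B$ is the normalization of $\Bl_Z\P^m$.
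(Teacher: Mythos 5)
Your overall strategy is the same as the paper's: irreducibility of $E$ (hence of $Z$) comes from $\pi'$ being an elementary contraction, and the normalization statement comes from factoring $\pi'$ through a blow-up and showing that the factoring map cannot contract any curve of the ray $R_{\pi'}$, hence is finite and birational onto a normal variety and therefore an isomorphism by Zariski's main theorem; this is exactly the paper's ``finite because the Picard number of $B$ is not one''. Two points deserve comment. In the irreducibility step, the auxiliary claim about non-dominating components is wrong as stated: if $E_i=\pi^{-1}(D)\subset\pi'^{-1}(Z)$, then every line parametrized by $D$ is contained in $Z$, so the ``scroll'' they sweep out sits inside $Z$ and has dimension at most $z\le m-2$, not $m-1$ (this can a priori happen when $Z$ contains large families of lines). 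Moreover Ando--Wi\'sniewski concerns contractions whose fibers have dimension at most one, which is not the situation here (fibers over $Z$ have dimension $m-z-1$). Fortunately the whole detour is dispensable: since $\rho(Y)=2$ and $\pi'$ is an elementary birational contraction of the $\Q$-factorial variety $Y$ onto $\P^m$, it has at most $\rho(Y)-\rho(\P^m)=1$ exceptional prime divisors, which is the content of the paper's one-line argument.

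The more substantive issue is in the second half. By replacing the reduced ideal $\cI_Z$ with $\cI:=\pi'_*\cO_Y(-nE)$ you guarantee that $\cI\cdot\cO_Y$ is invertible, but you then prove that $Y$ is the normalization of $\Bl_{\cI}\P^m$, which is not literally the statement: normalized blow-ups of two ideals with the same radical need not agree. For instance, in $\A^2$ the blow-ups of $(x,y)$ and of $(x,y^2)$ are both normal, but only the first is smooth (the second has an $A_1$ point), so they are not isomorphic over $\A^2$. To get the lemma as stated one still needs a map $Y\to\Bl_{\cI_Z}\P^m$, i.e.\ one needs $\cI_Z\cdot\cO_Y$ itself to be invertible --- this is precisely what the paper implicitly assumes when it invokes the universal property of the blow-up along $Z$ --- or some other comparison of the two Rees algebras. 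Once that point is granted, your extremal-ray argument applies verbatim and closes the proof exactly as in the paper.
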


\begin{proof} As we have seen in Lemma \ref{lem:covered}, $\pi'$ is a Fano-Mori contraction with exceptional locus $E$. Moreover since $\Pic(X)=\Z$, $\pi'$ is elementary, hence $E$ and its image $Z \subset \P^m$ (which is the fundamental locus of $X$) are irreducible. By the universal property of the blow up $b:B\to\P^m$ of $\P^m$ along $Z$, $\pi'$ factorizes through a surjective morphism $\f:Y\to B$, that is finite because the Picard number of $B$ is not one. Since $\f$ is generically one to one, it factors via an isomorphism from $Y$ to the normalization of $B$.
\end{proof}

Given a smooth variety $X$ and a family of rational curves parametrized
by $\cM$ such that $\cM_x$ is proper for the
general point $x \in X$, the {\it variety of minimal rational tangents} (VMRT
for short) of $X$ at $x$ with respect to $\cM$ is the closure of the set of points in $\P(\Omega_{X,x})$
corresponding to the tangent lines of the general curves of the family
$\cM$ passing by $x$. We refer to \cite{Hw} for a complete account on
the VMRT. The next result shows that, at a general point, the VMRT of a congruence of Picard number one, order one and with linear fibers is a disjoint union of linear spaces.

\begin{prop} \label{prop:VMRT} Let $X$ be a congruence of lines satisfying Assumptions \ref{ass:setupA} and having linear fibers. Let $z:=dim(Z)$ and set $\alpha:= E \cdot f\in \Z$, where $f$ denotes the cohomology class of a fiber of $\pi:Y\to X$. Then $\alpha=(m-1)/(m-z-1)$, $X$ has index $m-z$ and the VMRT of $X$ at a general point is the disjoint union of $\alpha$ linear spaces of dimension $m-z-2$.
\end{prop}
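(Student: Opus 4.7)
My plan is to exploit the blow-up structure of $\pi':Y\to\P^m$ from Lemma \ref{lem:blowup}. Since $Z$ is irreducible of dimension $z$ and $\pi'$ factors through the blow-up $b:B\to\P^m$ along $Z$ by a finite birational (normalization) map, over a general smooth point $y_0\in Z$ both spaces agree, and the fiber $F:=\pi'^{-1}(y_0)$ will be a projective space of dimension $m-z-1$. The linear fibers hypothesis identifies $F$, viewed via $\pi$ as a subvariety of $X\subset\G(1,m)$, with an honest linear $\P^{m-z-1}$, while the blow-up description gives $N_{F/Y}\cong E|_F\cong\cO_F(-1)$.

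To pin down $i_X$ and $\alpha$, I will extract two relations in $N^1(Y)_\R$, which has rank two with natural bases $\{L,H\}$ (from $\pi$) and $\{L,E\}$ (from $\pi'$). The projective bundle formula gives $-K_Y=2L+(i_X-1)H$, while $(\pi')^*(-K_{\P^m})=(m+1)L$ yields $-K_Y=(m+1)L+aE$ for some $a\in\Z$. For a line $L_1\subset F$, the normal bundle decomposes as $N_{L_1/Y}\cong\cO_{L_1}(1)^{\oplus(m-z-2)}\oplus\cO_{L_1}(-1)$, so adjunction gives $-K_Y\cdot L_1=m-z-1$. Since $L\cdot L_1=0$ and $H\cdot L_1=1$, matching against $-K_Y\cdot L_1=i_X-1$ forces $i_X=m-z$. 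Comparing both expressions for $-K_Y$ and intersecting with the fiber $f$ of $\pi$ (where $L\cdot f=1$, $H\cdot f=0$, $E\cdot f=\alpha$) then yields $\alpha=(m-1)/(m-z-1)$.

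For the VMRT description, I will use that lines in $X\subset\G(1,m)$ cover $X$ by Lemma \ref{lem:covered} and have minimum $H_X$-degree one, so they constitute the family of minimal rational curves. A line in $X$ through a general $\ell\in X$ corresponds to a pencil of lines in $\P^m$ sharing a common point $p\in\ell$; since an infinite pencil through $p$ can lie in $X$ only when $p$ is fundamental, such lines are in bijection with pairs $(p_i,\lambda)$, where $p_i$ ranges over the $\alpha$ points of $\ell\cap Z$ and $\lambda$ is a line of $F_i:=\pi'^{-1}(p_i)\cong\P^{m-z-1}$ through $\ell$. For each $i$ these $\lambda$'s form a $\P^{m-z-2}$ whose tangent directions at $\ell$ trace out $\P(T_{F_i,\ell})\subset\P(T_{X,\ell})$, a linear component of the stated dimension.

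The hard part will be establishing disjointness. If a nonzero $v$ lay in both $T_{F_i,\ell}$ and $T_{F_j,\ell}$ for $i\ne j$, the two corresponding lines in $\G(1,m)$ through $\ell$ would share a tangent direction and hence coincide as lines in projective space. The common line would then parametrize a pencil each of whose members passes through both $p_i$ and $p_j$, collapsing the pencil to the single line $\overline{p_ip_j}$, a contradiction. Since the $p_i$ are distinct for a generic $\ell$, disjointness holds. The other technical subtlety is to justify $N_{F/Y}\cong\cO_F(-1)$ even though $Y$ is only the normalization of $B$; this reduces to verifying that $\pi'$ is locally isomorphic to the blow-up above a smooth point of $Z$, which holds by generic smoothness of $Z$.
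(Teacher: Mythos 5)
Your proposal is correct and follows essentially the same route as the paper: both rely on the blow-up description of $\pi'$ from Lemma \ref{lem:blowup} restricted over the smooth locus of $Z$ (giving $E\cdot f'=-1$ and discrepancy $m-z-1$ along $E$), derive $\alpha$ and $i_X$ by comparing the two expressions for $-K_Y$ in $N^1(Y)$ against the intersection numbers with $f$ and with a line $f'$ in a fiber of $\pi'$, and then identify minimal rational curves through a general $x=[\ell]$ with pencils based at the $\alpha$ points of $\ell\cap Z$, i.e.\ with lines in the $\alpha$ linear spaces $\pi(\pi'^{-1}(p_i))$; your explicit disjointness argument is a welcome addition that the paper leaves implicit. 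The only slip is that $N_{F/Y}$ and $N_{L_1/Y}$ as you write them are missing $z$ trivial summands coming from $T_{Z,y_0}$ (what is $\cO_F(-1)$ is $N_{E/Y}|_F$, not $N_{F/Y}$), but this does not affect the degree $-K_Y\cdot L_1=m-z-1$ that you actually use.
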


\begin{proof}
Denote by $f'$ the class of a line in a fiber of $\pi'$, which, by hypothesis, is a minimal section of $Y$ over a line in $X$.

By Lemma \ref{lem:blowup}, outside of the set of singular points of $Z$, $\pi'$ is the smooth blowup of $Z$, hence $E \cdot f'=-1$ and $K_{Y} = \pi'^*K_{\P^{m}} + (m-z-1)E$. In fact $K_Y-\pi'^*K_{\P^m}$ is proportional to $E$, and their ratio may be computed outside of the singular points of $Z$. Since $L \cdot f'=0$, $H \cdot f'=1$, $L\cdot f=1$, $H\cdot f=0$,
we can  write
\begin{equation} \label{eq:Ered}
E= \alpha L - H
\end{equation}
from which it follows that $\alpha=(K_Y\cdot f+m+1)/(m-z-1)=(m-1)/(m-z-1)$ and $i_X=m-z$.

To finish the proof we observe that, given any line $\ell$ in $X$, $\pi'$ contracts the minimal section of $\P(\cQ|_\ell)$ over $\ell$, hence, given a general point $x \in X$ the lines passing through $x$ are contained in the $\alpha$ linear subspaces passing through $x$ which are images of
the $\alpha$ fibers of $\pi'$ meeting $\pi^{-1}(x)$. This spaces have dimension $m-z-1$ by Lemma \ref{lem:blowup}.
\end{proof}

The next proposition shows that the fundamental loci of a congruence with linear fibers satisfying Assumption \ref{ass:setupA} cannot be a complete intersection in $\P^m$.

\begin{prop} \label{prop:degZ}
Let $X$ be as in Prop. \ref{prop:VMRT}, and assume $z>0$. Then $\deg(Z) < \alpha^{m-z}$ and, in particular, $Z$ cannot be a complete intersection.
\end{prop}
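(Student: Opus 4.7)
The plan is to compute $\deg(Z)$ as an intersection number on $Y$ using the relation $E = \alpha L - H$ from equation (\ref{eq:Ered}) and the description of $\pi'$ as the normalization of the blow up of $\P^m$ along $Z$ from Lemma \ref{lem:blowup}, and to exhibit $\deg(Z)$ as $\alpha^{m-z}$ minus a strictly positive correction. I would set $r := m - z$ and introduce the intersection numbers $\sigma_k := L^{m-k} \cdot H^k$ for $0 \leq k \leq m$. Immediately one has $\sigma_0 = L^m = 1$ (by order one) and $\sigma_m = H^m = 0$ (since $H$ is pulled back from $X$ of dimension $m-1$). The key preliminary observation is that $\pi'_*(E^j)$ is a codimension $j$ cycle supported on $Z$, which has codimension $r$, so it vanishes for $j < r$; by the projection formula this gives $E^j \cdot L^{m-j} = 0$ for every $0 < j < r$. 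Substituting $E = \alpha L - H$ in these identities produces a triangular recurrence that I would solve inductively to obtain $\sigma_j = \alpha^j$ for $j = 0, \ldots, r-1$.

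Next, I would handle the case $j = r$ by invoking the standard Segre class formula for the exceptional divisor of a blow up, which yields $\pi'_*(E^r) = (-1)^{r-1}[Z]$, and hence $E^r \cdot L^z = (-1)^{r-1}\deg(Z)$. Expanding the same quantity as $(\alpha L - H)^r \cdot L^z$ with the already computed values of $\sigma_k$, and simplifying using the elementary identity $\sum_{k=0}^{r-1}(-1)^k \binom{r}{k} = (-1)^{r-1}$, the computation collapses to the clean formula $\deg(Z) = \alpha^r - \sigma_r$.

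The main obstacle will be to show the strict positivity of $\sigma_r = L^z \cdot H^r$; this is precisely where the hypothesis $z > 0$ must be used, since for $z = 0$ one has $\sigma_r = H^m = 0$ and the formula gives only the trivial equality $\deg(Z) = \alpha^m = 1$. Both $L$ and $H$ are nef as pullbacks of ample classes under morphisms, so $\sigma_r \geq 0$; to rule out vanishing I would represent $L^z$ by $\pi'^{-1}(V)$ for a generic codimension $z$ linear subspace $V \subset \P^m$ and $H^r$ by $\pi^{-1}(W)$ for a generic codimension $r$ linear section $W \subset X$, and verify that the scroll $\pi'(\pi^{-1}(W))$ has dimension exactly $z$ in $\P^m$, using that $\pi'$ is birational of order one to conclude that $\pi^{-1}(W) \to \P^m$ is generically injective. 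Then this scroll meets a generic codimension $z$ subspace $V$ in a nonempty finite set, so $\sigma_r > 0$ and $\deg(Z) < \alpha^r$.

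For the ``in particular'' assertion, the key is the geometric interpretation $\alpha = E \cdot f = \ell \cdot Z$ as the intersection length of a generic line $\ell \in X$ with $Z$. If $Z$ were a complete intersection $V(F_1, \ldots, F_r)$ of degrees $d_i$, then since the lines of $X$ sweep out $\P^m$ by order one, a generic $\ell$ would lie in none of the hypersurfaces $V(F_i)$, so each restriction $F_i|_\ell$ would be a nonzero degree $d_i$ polynomial on $\ell \cong \P^1$ whose $d_i$ zeros must contain the $\alpha$ points of $\ell \cap Z$; this forces $d_i \geq \alpha$ for every $i$ and hence $\deg Z = \prod d_i \geq \alpha^r$, contradicting the inequality just established.
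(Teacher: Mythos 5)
Your proof is correct and follows essentially the same route as the paper: both arguments use $E=\alpha L-H$ together with the vanishing of intersection numbers that push forward to cycles supported on the codimension-$(m-z)$ locus $Z$ to reduce everything to the identity $\deg(Z)=\alpha^{m-z}-L^{z}H^{m-z}$, and both deduce the complete-intersection statement from the existence of an $\alpha$-secant line through a general point. Your packaging via $E^{j}L^{m-j}=0$ and the Segre-class identity $\pi'_*(E^{m-z})=(-1)^{m-z-1}[Z]$ replaces the paper's direct computation $\deg(Z)=L^{z}EH^{m-z-1}$, and you additionally justify the strict positivity of $L^{z}H^{m-z}$, which the paper only asserts.
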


\begin{proof}
For $k>z$ we have in $Y$ that $L^{k}EH^{m-k-1} =0,$ therefore, using (\ref{eq:Ered}), we get
\begin{equation}\label{eq:Zrecur}
L^kH^{m-k}=\alpha L^{k+1}H^{m-k-1},\mbox{ for every }k>z.
\end{equation}
On the other hand the degree of $Z$ is given by
$$\deg (Z) = L^{z}EH^{m-z-1} = \alpha  L^{z+1}H^{m-z-1} - L^{z}H^{m-z},$$
hence, since $L^{z}H^{m-z}>0$ (because $z>0$) and $L^{m}=1$, using recursively (\ref{eq:Zrecur}) we get $$\deg (Z) < \alpha  L^{z+1}H^{m-z-1} = \alpha^{m-z}.$$

For the second part, note that if $Z$ were a complete intersection, it would be contained in a hypersurface of degree smaller than $\alpha$. This  contradicts the fact that for a general point of $\P^m$ there is an $\alpha$-secant line of $Z$.
\end{proof}

Let us concentrate in the case $Z$ smooth in order to use the examples appearing in the different classifications of smooth varieties with one apparent multiple point (see for instance \cite{DP} and the references therein).
If moreover $z=\dim Z > 2m/3$, $m\geq 7$, then $Z$ is conjectured to be a complete intersection. Hence the natural range to look for possible fundamental loci is $z \leq 2m/3$, where we have the following:

\begin{cor} With the same notation as above, assume that $Z$ is smooth and that $0<z \leq 2m/3$. 
Then the possible values of $(\alpha,z,m)$ are:
$$
(3,2k,3k+1)
\mbox{ with $k>0$ },\,\, (4,3,5),\,\, (4,6,9) \mbox{ and }\,\, (5,4,6).
$$
\end{cor}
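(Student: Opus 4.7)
The plan is to reduce the corollary to a purely arithmetic enumeration using the two hard constraints: the integrality relation $\alpha = (m-1)/(m-z-1)$ from Proposition \ref{prop:VMRT}, and the hypothesis $z \le 2m/3$.

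First I would reparameterize. Set $k := m - z - 1$, which is a positive integer (since $Z$ is a proper subvariety of $\P^m$). The fact that $\alpha \in \Z$ forces $k \mid (m-1)$; writing $m-1 = \alpha k$ gives
\[
m = \alpha k + 1, \qquad z = (\alpha-1) k, \qquad \alpha \ge 2.
\]
Plugging this parameterization into $z \le 2m/3$ and clearing denominators yields the clean single inequality
\[
(\alpha - 3)\, k \;\le\; 2.
\]

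The whole statement then reduces to a short case analysis on $\alpha$. For $\alpha = 3$ the inequality holds automatically for every $k \ge 1$, producing the infinite family $(3, 2k, 3k+1)$. For $\alpha = 4$ it becomes $k \le 2$, giving the two sporadic triples $(4,3,5)$ and $(4,6,9)$. For $\alpha = 5$ it becomes $2k \le 2$, forcing $k=1$ and giving $(5,4,6)$. For $\alpha \ge 6$ one has $(\alpha - 3)k \ge 3 > 2$, so there are no solutions.

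I do not expect any genuine obstacle, since the argument is essentially a one-line algebraic manipulation plus an enumeration; the only point requiring a remark is that the numerical inequality is also vacuously satisfied when $\alpha = 2$ (yielding the family $(2,k,2k+1)$), so one must observe that this case falls outside the range treated here, corresponding to smooth OADP varieties which form the subject of a classical and separate classification and whose known members (twisted cubic, Veronese surface, \dots) do not meet the linear fibers hypothesis of Proposition \ref{prop:VMRT} (the projection of $Z$ from a point of $Z$ is never a linear subspace in these examples).
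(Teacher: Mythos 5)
Your arithmetic reduction --- setting $k=m-z-1$, so that integrality of $\alpha=(m-1)/(m-z-1)$ gives $m=\alpha k+1$, $z=(\alpha-1)k$, and the bound $z\le 2m/3$ collapses to $(\alpha-3)k\le 2$ --- is exactly the ``simple computation'' that the paper leaves implicit, and your enumeration for $\alpha\ge 3$ is correct. The genuine gap is the case $\alpha=2$. That case is \emph{not} ``outside the range treated here'': a congruence with $\alpha=2$ satisfies every hypothesis of the corollary ($Z$ smooth, $0<z=k\le 2m/3$ for every $k\ge 1$), so it must be eliminated by an argument, and that elimination is in fact essentially the entire content of the paper's proof of this corollary. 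Your justification --- that the ``known members'' of the classification of varieties with one apparent double point fail the linear-fibers hypothesis --- is not a proof: the classification of smooth OADP varieties is not complete in general, and inspecting a list of known examples cannot rule the case out; the parenthetical claim about projections from a point of $Z$ is likewise only asserted ``in these examples'' rather than established.

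The paper's argument is geometric: if $\alpha=2$, Proposition \ref{prop:VMRT} shows that $X$ is a Fano manifold of Picard number one covered by linear spaces of dimension $m-z-1=\dim X/2$, hence by \cite{Sa} and \cite{NO} it is a projective space, a quadric or a Grassmannian of lines; but Proposition \ref{prop:VMRT} also gives index $i_X=m-z=\dim X/2+1$, and the only manifold on that list with this index is $\Q^2$, whose Picard number is two --- a contradiction. Some argument of this kind is indispensable (alternatively, one can argue directly: a linear fiber of $\pi'$ over a general $P\in Z$ is the family of lines through $P$ inside some $\Lambda\cong\P^{z+1}$, the general such line meets $Z$ at a second point, so the irreducible $Z$ is forced into $\Lambda$, and then the lines of the congruence, which meet $Z$ in length two, all lie in $\Lambda\subsetneq\P^{2z+1}$, contradicting order one). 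Without such a step the family $(2,k,2k+1)$ remains in your list and the stated conclusion is not reached.
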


\begin{proof} Let us first show that $\alpha>2$. If $\alpha=2$ then, by Prop. \ref{prop:VMRT}, $X$ is a Fano manifold of Picard number one, covered by linear spaces of dimension $m-z-1 =\dim X/2$. This implies that $X$ is a projective space, a quadric or a Grassmannian of lines by \cite[Main Theorem]{Sa}, \cite[Corollary 5.3]{NO}. On the other hand, by Prop. \ref{prop:VMRT} again, the index of $X$ is $m-z=\dim X/2+1$, then the only possibility is $X=\Q^2$, whose Picard number is not one.
Now a simple computation shows that the only possible cases are $(\alpha, z, m) = (3,2k, 3k+1),(4,3,5),(4,6,9), (5,4,6)$.
\end{proof}

We will discuss now the existence of examples of congruences satisfying the properties that we have imposed in this appendix, paying attention to their VMRT's. For the types $(4,3,5)$ and $(5,4,6)$ the VMRT at the general point is a finite set of points; an example of type $(4,3,5)$ appears by considering the congruence of $4$-secant lines to a Palatini threefold in $\P^5$ (see \cite[Thm.0.1]{DP}), but there are not known examples of type $(5,4,6)$ with $Z$ smooth. To our best knowledge examples of type $(4,6,9)$, for which the VMRT would be a disjoint union of lines, are not known either.

Examples of type $(3,2k, 3k+1)$ appear by considering congruences of trisecant lines of smooth projections of Severi varieties.
Let us recall that a {\it Severi variety} is a smooth $2k$-dimensional projective variety $S \subset \P^{{3k}+2}$ which can be isomorphically projected to $\P^{3k+1}$. They have been classified by Zak (cf. \cite[Chapter 4]{Z}), who proved, in particular, that the only possible values of $k$ are $1,2,4$ and $8$. The last part of this appendix is devoted to showing the following

\begin{prop}\label{prop:severi}
Let $X\subset\G(1,3k+1)$ be the closure of the family of trisecant lines to a general isomorphic projection $Z\subset\P^{3k+1}$ of a $2k$-dimensional Severi variety $S\subset\P^{3k+2}$. Then $X$ is a smooth congruence of order one with linear fibers.
\end{prop}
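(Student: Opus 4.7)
The plan is to identify $X$ with a variety of $2$-planes in $\P^{3k+2}$ and then exploit the Jordan-algebraic geometry of Severi varieties. Fix the center of the isomorphic projection $p\in\P^{3k+2}\setminus\operatorname{Sec}(S)$, so that $\pi:S\to Z$ is an isomorphism onto $Z\subset\P^{3k+1}$. Since no secant to $S$ passes through $p$, every trisecant line $\ell\subset\P^{3k+1}$ to $Z$ lifts uniquely to a $2$-plane $\overline{p,\ell}\subset\P^{3k+2}$ meeting $S$ in three distinct points, and conversely every $2$-plane through $p$ that meets $S$ in at least three points projects to a trisecant to $Z$. I would therefore identify $X$ birationally with the variety $\widetilde{X}$ of $3$-secant $2$-planes to $S$ passing through $p$.

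The first step after this reformulation is a dimension count. The family of $3$-secant $2$-planes to $S$ in $\G(2,\P^{3k+2})$ has dimension $6k$, being birationally parametrized by the symmetric cube $S^{(3)}$; passage through $p$ imposes codimension $3k$; and the classical Scorza--Zak result $\operatorname{Sec}_3(S)=\P^{3k+2}$ guarantees non-emptiness and, by a transversality argument, $\dim\widetilde{X}=3k$. Smoothness of $\widetilde{X}$ at the generic point then follows from smoothness of the ambient incidence and the genericity of $p$.

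For the order-one property, a generic $y\in\P^{3k+1}$ corresponds to a general line $L_y=\overline{p,\pi^{-1}(y)}\subset\P^{3k+2}$, and trisecants to $Z$ through $y$ correspond to $3$-secant $2$-planes $\Pi\supset L_y$. The crucial input is the classical fact that $\operatorname{Sec}(S)$ is an irreducible cubic hypersurface, so that $L_y\cap\operatorname{Sec}(S)=\{q_1,q_2,q_3\}$ consists of three rank-$2$ points. Any such $\Pi$ meets $S$ in a triple $\{s_1,s_2,s_3\}$ whose three pairwise secants lie in $\Pi$ and must therefore cut $L_y$ precisely at the $q_i$. Combined with the description of the entry loci $Q_{q_i}\subset S$ as smooth $k$-dimensional quadrics, this forces $\{s_1,s_2,s_3\}$ to be uniquely determined; in Jordan-algebraic terms it is the (unique, up to permutation) diagonalization of the pencil spanned by $L_y$.

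Finally, for the linear-fiber property, let $y\in Z$ and $\tilde y=\pi^{-1}(y)$. Trisecants through $y$ correspond to pairs $\{s,s'\}\subset S\setminus\{\tilde y\}$ whose secant meets the line $\overline{p,\tilde y}$; parametrizing by that intersection point, the star of trisecants through $y$ is the $\pi$-image of the union, over the two further points of $\overline{p,\tilde y}\cap\operatorname{Sec}(S)$, of the corresponding entry-locus quadrics (each of which contains $\tilde y$). I would describe this union explicitly using the linear $\P^k$'s contained in $S$ for $k\geq 2$, and using the conic fibration on $v_2(\P^2)$ for $k=1$, showing in each case that the projection fills a linear $(k+1)$-plane in $\P^{3k+1}$. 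The main obstacle I foresee is exactly this last step: a uniform (or case-by-case) verification that the trisecants through each fundamental point span only a linear subspace is precisely where the specific Jordan-algebraic structure of the four Severi varieties must enter decisively.
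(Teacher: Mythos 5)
Your reduction of $X$ to the family of $3$-secant $2$-planes through the projection center is the right starting point, and your order-one argument via the rank stratification of $\P^{3k+2}$ (the secant variety of $S$ as the cubic hypersurface, a general pencil admitting a unique simultaneous diagonalization) is a viable alternative: it is essentially the Jordan-algebraic route of Iliev--Manivel, which the paper cites as a different proof of linearity. But as written the proposal has two genuine gaps. First, and most seriously, you explicitly defer the linear-fibers property --- the verification that the trisecants through a fundamental point $y\in Z$ fill out a linear $\P^{k}$ in $\G(1,3k+1)$ --- to an unexecuted case-by-case analysis of entry loci. This is the heart of the proposition (it is what makes the VMRT linear and reducible, the point of the whole appendix), so the proof is incomplete exactly where it matters. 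Second, your smoothness argument only yields smoothness of $\widetilde X$ at a general point, which is vacuous for a variety; the statement requires $X$ smooth everywhere, and in the paper this is deduced only \emph{after} the fibers of $\pi'$ over $Z$ are shown to be $k$-dimensional linear spaces (so that $Y$ is the blow-up of $\P^{3k+1}$ along the smooth $Z$, hence $Y$, and then $X$, is smooth). Smoothness therefore cannot be decoupled from the step you left open.

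For comparison, the paper bypasses entry loci and Jordan algebras by invoking the Ein--Shepherd-Barron theorem: the quadrics through $S$ define an involutive Cremona transformation $\psi$ of $\P^{3k+2}$ resolved by the blow-ups of $S$ and of $S'\cong S$. Order one follows because the strict transform under $\psi$ of a general line through the center $O$ is a conic meeting $S'$ in three points, whose plane transforms back to the unique trisecant plane through that line; and for $y\in Z$ the fiber of $\pi'$ is identified with the star of lines through a point $Q_y$ inside a $(k+1)$-plane $\Sigma_y$ (the $\sigma$-image of a fiber of $\sigma'$), each such line being contracted by $\psi$ and hence spanning with $O$ a trisecant plane --- this gives linearity of the fibers uniformly for all four Severi varieties. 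If you wish to keep your approach, the missing step is precisely to show that the union of the two entry-locus quadrics attached to the residual intersection of $\overline{O\,\pi^{-1}(y)}$ with the secant variety projects onto a single $\P^{k+1}$; the Cremona involution is the device that packages exactly this.
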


\begin{rem}\label{rem:hmconj}
In particular Proposition \ref{prop:VMRT} tells us that the VMRT at a general point of $X$ consists of the disjoint union of three linear spaces of dimension $k-1$. Hence this gives (for $k=2,4,8$) a negative answer to the problems of irreducibility (cf. \cite[Question 2]{Hw}) and non linearity (\cite[Conjecture, p.52]{HM}) of the VMRT of a Fano manifold of Picard number one. Note also that Hwang has shown that the number of irreducible components of the VMRT at a general point is at least three (cf. \cite[Prop.~2]{Hw2}).
\end{rem}

In \cite{IM} the authors showed, using techniques of Representation Theory and Jordan Algebras, that the congruences of Proposition \ref{prop:severi} are in fact linear. We will show how \ref{prop:severi} may be obtained by using the following result due to Ein and Sheperd-Barron (see \cite[Thm.~2.6]{E-SB}):

\begin{lem}\label{lem:esb}
Let $S\subset\P^{3k+2}$ be a $2k$-dimensional Severi variety. The linear system of quadrics containing $S$ provides an involutive Cremona transformation $\psi:\P^{3k+2}\to\P^{3k+2}$, fitting in a diagram:
$$
\xymatrix{&B\ar[ld]_{\sigma}\ar[rd]^{\sigma'}&\\\P^{3k+2}\ar@{..>}[rr]^{\psi}&&\P^{3k+2}}
$$
where $\sigma$ and $\sigma'$ denote, respectively, the blowing up of $\P^{3k+2}$ along $S$ and the blowing up of $\P^{3k+2}$ along $S'\cong S$.
\end{lem}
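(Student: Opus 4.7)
The plan is to transfer the entire question to the common resolution $B$ of the Cremona $\psi$ in Lemma \ref{lem:esb} and work out the intersection theory using the two relations $\sigma'^*H=2\sigma^*H-E_S$ and $\sigma^*H=2\sigma'^*H-E_{S'}$ (reflecting that $\psi$ and $\psi^{-1}$ are given by the linear systems of quadrics through $S$ and $S'$ respectively). Throughout I fix a generic $p\in\P^{3k+2}\setminus\mathrm{Sec}(S)$ so that $Z=\pi_p(S)$, and for a generic $q\in\P^{3k+1}$ I write $L_q=\langle p,\pi_p^{-1}(q)\rangle$.

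For \emph{order one}, I would first translate: trisecant lines of $Z$ through $q$ correspond bijectively (via $\pi_p$) to 2-planes $\Pi\subset\P^{3k+2}$ containing $L_q$ and meeting $S$ in three points. Intersection-theoretic computations on $B$ using the two relations above then show that $\psi(L_q)$ is a plane conic in the target meeting $S'$ in three points, and that for any source 2-plane $\Pi$ meeting $S$ transversely in three points, the strict transform $\tilde\Pi$ is a degree-six del Pezzo surface (the blow-up of $\Pi\cong\P^2$ at those three points), whose two complementary triples of $(-1)$-curves are contracted by $\sigma$ and $\sigma'$ respectively to $\Pi$ itself and to a 2-plane $\psi(\Pi)\subset\P^{3k+2}$ in the target that meets $S'$ in three points. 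Involutivity of $\psi$ then produces a bijection between source and target 2-planes of this type; since a plane conic admits a unique projective span, there is exactly one target 2-plane containing $\psi(L_q)$, and transporting this back through the bijection gives the unique source 2-plane through $L_q$ meeting $S$ in three points, hence the unique trisecant through $q$. Smoothness of $X$ then follows from a Bertini argument applied to the parametrization of $X$ by the variety of unordered triples on $S$ whose linear span contains the generic point $p$.

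For \emph{linear fibres} I would fix $z=\pi_p(\tilde z)\in Z$ with $\tilde z\in S$ and set $L_0=\langle p,\tilde z\rangle$. Two computations on $B$ should control the geometry near the exceptional fibre $E_S|_{\tilde z}\cong\P^{k+1}$: the identity $(E_S|_{\tilde z})\cdot(\sigma'^*H)^{k+1}=(H_{\P^{k+1}})^{k+1}=1$ forces $\sigma'|_{E_S|_{\tilde z}}$ to be birational onto a linear subspace $\Lambda_{\tilde z}\cong\P^{k+1}$ of the target (both the degree of the map and of the image being $1$), and the restriction $E_{S'}|_{E_S|_{\tilde z}}=2H_{\P^{k+1}}$ shows that $S'\cap\Lambda_{\tilde z}$ is a $k$-dimensional quadric. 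By Cremona symmetry applied to $E_{S'}|_{\tilde z'}$ for $\tilde z'\in S'$, the analogous source-side statement yields, for each $q_1\in\mathrm{Sec}(S)\setminus S$, a linear subspace $P_{q_1}\subset\P^{3k+2}$ of dimension $k+1$ that is the union of all secants of $S$ through $q_1$ and cuts $S$ along the $k$-dimensional entry quadric $\Sigma_{q_1}$. Since $\mathrm{Sec}(S)$ is a cubic hypersurface with singular locus $S$, the line $L_0$ meets $\mathrm{Sec}(S)$ with $\tilde z$ of multiplicity two, leaving a single further intersection $q_1$. Every 2-plane $\Pi\ni L_0$ meeting $S$ in three points must then be of the form $\mathrm{span}(L_0,s)$ with $s\subset P_{q_1}$ a secant of $S$ through $q_1$, so $\Pi\subset W:=\mathrm{span}(L_0,P_{q_1})$, a linear subspace of dimension $k+2$; projecting from $p$, every trisecant through $z$ lies in the $(k+1)$-dimensional linear subspace $\pi_p(W)\subset\P^{3k+1}$, and I would close by a converse dimension count showing that every line through $z$ in $\pi_p(W)$ is indeed a trisecant.

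The main obstacle I expect is the linear-fibres step: deducing, purely from Lemma \ref{lem:esb} and the blow-up calculus on $B$, that the secant cone of $S$ through $q_1$ fills a linear $\P^{k+1}$ (via the Cremona-symmetric identification of $P_{q_1}$ with the image under $\sigma$ of an exceptional fibre of $\sigma'$), and that $\mathrm{Sec}(S)$ is nodal along $S$ with the expected intersection behaviour along $L_0$. The del Pezzo description of $\tilde\Pi$ used in the order-one step is more routine, once one checks transversality of $\Pi$ with $S$ for generic $\Pi$.
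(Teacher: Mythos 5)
Your proposal does not prove the statement it is supposed to prove. Lemma \ref{lem:esb} is the assertion that the quadrics through a Severi variety $S\subset\P^{3k+2}$ define an involutive Cremona transformation $\psi$ resolved by a single smooth blow-up $\sigma:B\to\P^{3k+2}$ along $S$, with the second contraction $\sigma'$ again a smooth blow-up along a copy $S'\cong S$. This is a theorem of Ein and Shepherd-Barron, which the paper quotes from \cite[Thm.~2.6]{E-SB} without proof. Your argument begins by transferring everything to ``the common resolution $B$ of the Cremona $\psi$ in Lemma \ref{lem:esb}'' and then exploits the relations $\sigma'^*H=2\sigma^*H-E_S$ and $\sigma^*H=2\sigma'^*H-E_{S'}$; that is, it takes the entire content of the lemma (existence of $\psi$, its involutivity, and the two-blow-up diagram) as an input. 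As a proof of Lemma \ref{lem:esb} this is circular. Nothing in the proposal establishes that the quadrics through $S$ form a linear system of projective dimension $3k+2$, that the resulting rational map is birational and an involution, or that one smooth blow-up along $S$ resolves it with the other extremal contraction of $B$ again a smooth blow-up of a Severi variety. Those are exactly the points that would have to be proved (classically, via the realization of Severi varieties as rank-one loci in rank-three Jordan algebras and the adjoint identity, or by running through Zak's classification case by case), and none of them appear.

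What you have actually sketched is a proof of Proposition \ref{prop:severi} --- order one, smoothness, and linearity of the fibres of the trisecant congruence --- taking Lemma \ref{lem:esb} for granted, which is precisely the logical role the lemma plays in the paper. Read as such, your order-one step (the strict transform of the line $\langle p,q\rangle$ is a conic meeting $S'$ in three points, whose plane transforms back to the unique trisecant plane through the line) coincides with the paper's argument, and your linear-fibres step via the $(k+1)$-planes swept out by secants through the residual point $q_1$ is a variant of the paper's construction of the spaces $\Sigma_P=\sigma(\sigma'^{-1}(\sigma'(R_P)))$. But that is a different statement from the one under review, and the lemma itself remains unproved.
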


\begin{proof}[Proof of Proposition \ref{prop:severi}]
As usual we denote by $\pi:Y\to X$ the universal family of lines over $X$, and by $\pi'$ the evaluation morphism to $\P^{3k+1}$. Let $O$ be the center of the projection from $\P^{3k+2}$ to $\P^{3k+1}$, which is a general point in $\P^{3k+2}$.

We begin by showing that $X$ is a congruence of order one. In fact we will show that for every $P \in \P^{3k+1}\setminus Z$ passes a unique trisecant line to $Z$. This is equivalent to prove that there is a unique plane $\Lambda$ in $\P^{3k+2}$ containing the line $\langle O,P\rangle$ which is trisecant to $S$.
Since, by construction, the line $\langle O,P\rangle$  does not meet $S$, then the strict transform of $\langle O,P\rangle$ via $\psi$ is a conic $C$. Since $\psi$ is involutive, $C$ must meet $S'$ in three points, and the strict transform via $\psi$ of the plane $\Lambda'$ containing this conic is a plane $\Lambda$, containing $\langle O,P\rangle$, which is trisecant to $S$.

Set $E:=\pi'^{-1}(Z)$. We will prove that $\pi(\pi'^{-1}(P))$ is a $k$-dimensional linear space for all $P\in Z$. From this it follows that $E\subset Y$ is an irreducible divisor and, by the universal property of the blowing up, the morphism $\pi'$ factors via a finite morphism from $Y$ to the blow up $\widetilde{\P}^{3k+1}_Z$. Since we have already seen that $\pi'$ is birational, then we may assert that $Y$ is in fact isomorphic to $\widetilde{\P}^{3k+1}_Z$. In particular $Y$ is smooth of Picard number two, and  we may conclude that $X$ is smooth of Picard number one.

Let $P\in Z$ be any point and consider the line $\langle O,P\rangle$. The strict transform of $\langle O,P\rangle$ by $\psi$ is a line meeting the exceptional locus of $\sigma'$ in one point, that we denote by $R_P$. Set $Q_P:=\sigma(R_P)$, denote by $\Sigma_P$ the $(k+1)$-linear space $\sigma(\sigma'^{-1}(\sigma'(R_P)))$ and by $\Omega_P$ the family of lines in $\Sigma_P$ passing by $Q_P$.
Since, by construction, $O\not\in\Sigma_P$, for every line $\ell\in\Omega_P$ the linear span $\langle O,\ell\rangle$ is a plane containing $\langle O,P\rangle$. Since the line $\ell$ gets contracted by $\psi$ it follows that $\ell$ meets $S$ in a subscheme of length at least two, and the plane $\langle O,\ell\rangle$ is trisecant. In particular, for every $P\in Z$ we may identify $\Omega_P$ with a $k$-dimensional linear subspace contained in $X$, whose union is a closed subset that we denote by $\Omega\subset X$.

Conversely, given a general trisecant plane $\Pi$ containing a line $\langle O,P\rangle$, with $S\cap\Pi=\{P,P',P''\}$, since $\psi_{|\Pi}$ is the standard Cremona trasformation with base points $\{P,P',P''\}$, then the point $Q_P$ coincides with $\langle O,P\rangle\cap \langle P',P''\rangle$ and $\langle P',P''\rangle\in\Omega_P$. This shows that $\Omega$ contains a dense open set of $X$, so that necessarily $\Omega=X$.

Finally, this may only occur if the obvious inclusion $\Omega_P\subseteq\pi'^{-1}(P)$ is an equality for all $P$. In fact, take an element $r$ in $\pi'^{-1}(P)$. It lies in $X=\Omega$, so it belongs to
 $\Omega_{P'}$ for some $P'\in Z$. The plane $\langle O,r\rangle$ contains $\langle O,P\rangle$, by hypothesis, and it contains
 a line $\ell$ contained in $\Sigma_{P'}$. But the strict transform of $\langle O,P\rangle$ in $B$
 meets the exceptional divisor of $\sigma'$ in one point $R_P$, whose image
 $Q_P$ via $\sigma$ is necessarily $\ell\cap OP$. This shows that
 $r$ belongs to $\Omega_P$.
\end{proof}

\bibliographystyle{amsalpha}

\end{document}